\allowdisplaybreaks \numberwithin{equation}{section}
\theoremstyle{plain}
\newtheorem{thm}{Theorem}[section]
\newtheorem{cor}[thm]{Corollary}
\newtheorem{lem}[thm]{Lemma}
\newtheorem{defn}[thm]{Definition}
\renewcommand\Re{\operatorname{\mathfrak{Re}}}
\renewcommand\Im{\operatorname{\mathfrak{Im}}}
\DeclareMathOperator{\sech}{sech}
\title{\bf  Stability of bound states for regularized nonlinear Schr\"odinger equations}
\author{John Albert and Jack Arbunich}
\affil{Department of Mathematics, University of Oklahoma\\ Norman, OK 73019\\ \texttt{jalbert@ou.edu}} 
\begin{document}
\maketitle

\newcommand\blfootnote[1]{%
  \begingroup
  \renewcommand\thefootnote{}\footnote{#1}%
  \addtocounter{footnote}{-1}%
  \endgroup
}

\begin{abstract} 

We consider the stability of bound-state solutions of a family of regularized nonlinear Schr\"odinger equations which were introduced by Dumas, Lannes and Szeftel as models for the propagation of laser beams.  Among these bound-state solutions
are ground states, which are defined as solutions of a variational problem.  We give a sufficient condition for existence and orbital stability of ground states, and use it to verify that ground states exist and are stable over a wider range of nonlinearities than
for the nonregularized nonlinear Schr\"odinger equation.  We also give another sufficient and almost necessary condition for stability of general bound states, and show that some stable bound states exist which are not ground states.

\end{abstract}

{\it Keywords:}  Nonlinear Schr\"odinger equation, regularization, bound states, ground states, stability

\section{Introduction}
\label{sec:intro}
\renewcommand{\theequation}{\arabic{section}.\arabic{equation}}
\setcounter{section}{1} \setcounter{equation}{0}

In this paper, we consider regularized nonlinear Schr\"odinger equations recently introduced as models for the propagation of laser beams in \cite{DLS}, and further studied in \cite{AAS, AKS, PP}.  One of these equations is the fully regularized equation    
\begin{equation}
i(u-\beta \Delta u)_t + \Delta u + |u|^p u = 0,
\label{keqd}
\end{equation}
with $\beta > 0$, whose relation to the nonlinear Schr\"odinger (NLS) equation,
\begin{equation}
iu_t + \Delta u + |u|^p u = 0,
\label{NLS}
\end{equation} 
 is similar to that between the Korteweg-de Vries equation
$$
u_t +u_x + u_{xxx} + uu_x = 0
$$
and its regularized counterpart, the Benjamin-Bona-Mahony equation \cite{BBM}
\begin{equation}
(u-\beta u_{xx})_t + u_x + u_{xxx} + uu_x = 0.
\label{BBM}
\end{equation}
More generally, we will consider partially regularized equations of the form  
\begin{equation}
i(P_\beta u)_t + \Delta u + |u|^p u = 0.
\label{rNLS}
\end{equation}
In \eqref{rNLS}, the complex-valued function $u(\mathbf x,t)$ is defined for $t \in \mathbb R$ and $\mathbf x \in \mathbb R^d$, $d \ge 1$.  In the physical situations for which \eqref{rNLS} is derived as a model equation in \cite{DLS}, $\beta$ is a real parameter with $0 < \beta \ll 1$, but for the purposes of the present paper we only need to assume that $\beta > 0$.  

The operator $P_\beta$ is defined as follows.  Fix an integer $k$ such that $0 \le k \le d$. 
 Let $\mathbf x = (x_1,\dots,x_d) \in \mathbb R^d$.  For $1 \le k \le d-1$, we write $\mathbf x =
(x,y)$, where $x = (x_1,\dots,x_{d-k}) \in \mathbb R^{d-k}$ and $y=(y_1,\dots,y_k) \in \mathbb R^k$, where $y_j = x_{d-k+j}$
for $1 \le j \le k$.  We define 
$$
\begin{aligned} 
P_\beta u := u - \beta\Delta_y u,
\end{aligned} 
$$
where $\Delta_y u= \sum_{j=1}^{k} u_{y_j y_j}$.
In case $k=d$, we define $P_\beta u = u -\beta \Delta u$, so that \eqref{rNLS} becomes \eqref{keqd}.        
 In the case $k=0$, we simply define $P_\beta u = u$, so that then \eqref{rNLS} reduces to \eqref{NLS}.
When $1 \le k \le d-1$, the inclusion of the operator $P_\beta$ in \eqref{rNLS} has the effect of inducing regularization in some, but not all, of the coordinate directions in $\mathbb R^d$.  

Our goal here is to study the effect of this partial regularization on the stability of bound-state solutions of \eqref{rNLS}, i.e. solutions of the form 
$$u(\mathbf x,t)=e^{i\omega t}\varphi(\mathbf x)$$ where $\omega$ is real and $\varphi$ is in the $L^2$-based Sobolev space  $H^1(\mathbb R^d)$.   First, in Theorems \ref{precompact}, \ref{summaryk0}, and \ref{summaryk1}, we give a variational characterization of certain bound states, known as ground states, and prove their orbital stability by a method similar to that used by Cazenave and Lions for the NLS equation in \cite{CL}.   Then in Theorem \ref{GSSstabilitythm}, we prove the orbital stability of a wider class of bound-state solutions corresponding to phase speeds $\omega$ such that $m'(\omega) > 0$, where $m(\omega)$ is the function defined below in \eqref{defm}. The condition $m'(\omega) > 0$ was identified by M. Weinstein \cite{W1, W2, W3} as a sufficient condition for orbital stability for bound-state solutions of the NLS equation, corresponding to the case $k=0$, and the proof of Theorem \ref{GSSstabilitythm} proceeds by generalizing Weinstein's approach to the case $k \ge 1$.  
 
 Our stability results concern the initial-value problem set for equation \eqref{rNLS} in certain inhomogeneous Sobolev spaces which we now proceed to define. 
 
\begin{defn} For $d \ge 1$, $0 \le k \le d$, and $l \in \mathbb N$,  let $X_{k,l}$ be the space of all functions $u \in L^2(\mathbb R^d)$ such that 
$$
\left(1 + \Delta_y \right)^{1/2} u \in H^l(\mathbb R^d).
$$ 
\end{defn}

When $k=0$, then $X_{k,l} = H^l(\mathbb R^d)$, and when $k=d$ then $X_{k,l} = H^{l+1}(\mathbb R^d)$.  
When $1 \le k \le d-1$, then $X_{k,l}$ can be characterized as follows.  Define $L^2(\mathbb R^{d-k}_x, H^1(\mathbb R^d_y))$ to be the space of all functions $u(\mathbf x)=u(x,y)$ such that
$$
\|u\|_{L^2(\mathbb R^{d-k}_x, H^1(\mathbb R^d_y))} =\left( \int_{\mathbb R^{d-k}_x} \|u(x,\cdot)\|_{H^1(\mathbb R^d_y)}^2\ dx\right)^{1/2} < \infty.
$$
Then $X_{k,l} = H^l(\mathbb R^{d-k}_x, H^1(\mathbb R^d_y))$, the space of all functions $u \in L^2(\mathbb R^d)$ such that $u$, and all the partial derivatives of $u$ with respect to $x_j$ and $y_j$ up to order $l$, are
in $L^2(\mathbb R^{d-k}_x, H^1(\mathbb R^d_y))$.

  Two functionals which play an important role in the evolution of solutions of \eqref{rNLS} are the conserved functionals
defined for $u \in H^1$ by 
\begin{equation*}
E(u)= \int \left\{ \frac12 |\nabla u|^2 - \frac{1}{(p+2)} |u|^{p+2}\right\} d\mathbf x
\end{equation*} 
and, if $1 \le k \le d$,
\begin{equation}
M(u) = \frac12 \int \left\{ |u|^2 + \beta |\nabla_y u|^2\right\} d\mathbf x,
\label{defM}
\end{equation}
where $\nabla u= (u_{x_1},u_{x_2},\dots,u_{x_d})$ and $\nabla_y u=(u_{y_1},u_{y_2},\dots,u_{y_k})$.  When $k=0$ we define $M(u)=\frac12 \int |u|^2\ d\mathbf x$.
(Here and frequently below, we use $H^1$ to denote $H^1(\mathbb R^d)$, and all integrals are taken over $\mathbb R^d$, unless otherwise specified.  For a complete list of notations used in the paper, see Section \ref{sec:notprelim}.))  It follows from Sobolev embedding theorems (see Theorem \ref{GN} below) that $E(u)$ and $M(u)$ are well-defined and continuous on $H^1$ for $0 < p < p_c(d)$, where $p_c(d)$ is defined in \eqref{defpc}.

   Let $X$ be a Banach space consisting of functions defined on $\mathbb R^d$, and let $C(\mathbb R, X)$ be the space of continuous maps from $\mathbb R$ to $X$.   We say that the initial-value problem for \eqref{rNLS} is {\it globally well-posed in  $X$} if for every $u_0 \in X$, there exists a unique solution $u(\mathbf x,t)$ of \eqref{rNLS} which, when viewed as a map taking $t$ to $u(\cdot, t)$, is in $C(\mathbb R, X)$ and has initial data $u(\cdot,0) =u_0$; and if the map taking $u_0$ to $u$ is continuous from $X$ to $C(\mathbb R, X)$. 
 
 We can now state the well-posedness result from \cite{AAS} that we will use in this paper.
 
 \begin{thm}[\cite{AAS}]  Suppose $d \ge 1$ and $0 \le k \le d-1$.  If either  
  $k \le 2$ and $0 \le p < \frac{4}{d-k}$,  or $k > 2$ and $0 \le p \le \frac{4}{d-2}$, then  \eqref{rNLS} is globally well-posed in $X_{k,0}$ and in $X_{k,1}$. If $k=d$, then
  \eqref{rNLS} is globally well-posed in $H^1(\mathbb R^d)$ for $0 \le p \le \frac{4}{d-2}$ (when $d \ge 3$) 
   and for $0 \le p < \infty$ (when $d=1$ or $d=2$). 
   
   Solutions $u(\mathbf x,t)$ of \eqref{rNLS} with initial data $u(\mathbf x,0)=u_0(\mathbf x)$ in $X_{k,0}$ satisfy $M(u(\cdot, t))=M(u_0)$ for all $t \in \mathbb R$; and solutions of \eqref{rNLS} with initial data $u_0$ in $H^1 \cap X_{k,1}$ satisfy $E(u(\cdot, t))=E(u_0)$ for all $t \in \mathbb R$.   
     \end{thm}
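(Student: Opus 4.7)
The plan is to reduce \eqref{rNLS} to a first-order evolution equation by inverting $P_\beta$.  Since the Fourier symbol of $P_\beta$ is $1+\beta|\xi_y|^2\ge 1$, the operator $P_\beta^{-1}$ is bounded on $L^2$ and gains two derivatives in the $y$-variables, so \eqref{rNLS} is equivalent to
$$u_t=iP_\beta^{-1}\Delta u+iP_\beta^{-1}(|u|^pu).$$
Equipping $X_{k,0}$ with the inner product $(u,v)=\int(u\bar v+\beta\nabla_y u\cdot\nabla_y\bar v)\,d\mathbf x$, so that $P_\beta^{1/2}$ is an isometry onto $L^2$, renders $iP_\beta^{-1}\Delta$ skew-adjoint, and by Stone's theorem it generates a unitary group $U(t)$ on $X_{k,0}$; commuting $U(t)$ with the derivatives $\partial_{x_j}$ and $\partial_{y_j}$ extends unitarity to $X_{k,1}$.

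I would then run a Picard iteration for Duhamel's formula
$$u(t)=U(t)u_0+i\int_0^tU(t-s)P_\beta^{-1}(|u(s)|^pu(s))\,ds$$
in $C([0,T];X_{k,l})$.  The decisive ingredient is a local Lipschitz bound on $u\mapsto P_\beta^{-1}(|u|^pu)$ in $X_{k,l}$, and this is precisely where the hypothesis on $p$ enters.  When $k\le 2$, the Sobolev embedding $H^1(\mathbb R^k_y)\hookrightarrow L^q(\mathbb R^k_y)$ is available for every finite $q$; combining this with the two $y$-derivatives gained from $P_\beta^{-1}$ through an anisotropic H\"older estimate in the $x$/$y$ split accommodates any $p<4/(d-k)$, matching the $L^2$-subcritical threshold for an NLS in the $d-k$ remaining dimensions.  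When $k>2$ the $y$-embedding caps at $L^{2k/(k-2)}$, the argument reduces to the full $H^1$-theory of NLS in dimension $d$, and one recovers the standard energy-subcritical range $p\le 4/(d-2)$.  The case $k=d$ is easier since $P_\beta^{-1}\Delta$ is then a bounded operator on every $H^s$.

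For the conservation laws I would pair \eqref{rNLS} against $\bar u$ and take imaginary parts: the $|u|^pu$ and $\Delta u$ terms drop out, and integration by parts on $P_\beta u_t$ produces
$$\frac{d}{dt}\int\tfrac12\bigl(|u|^2+\beta|\nabla_yu|^2\bigr)\,d\mathbf x=0,$$
which is $\tfrac{d}{dt}M(u)=0$.  Pairing instead against $\bar u_t$ and taking real parts gives $\tfrac{d}{dt}E(u)=0$, the $iP_\beta u_t$ contribution being purely imaginary because $\int(P_\beta u_t)\bar u_t\,d\mathbf x$ is real.  These identities are first proved on smooth (say Schwartz) solutions and then extended by the continuous dependence built into the local theory.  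Combined with a Gagliardo--Nirenberg bound on the $L^{p+2}$ term in $E$, they give an a priori bound on $\|u\|_{H^1}$, hence on $\|u\|_{X_{k,l}}$, throughout the subcritical $p$-range, upgrading local to global existence.

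The main obstacle is the nonlinear estimate in $X_{k,0}$, where no $x$-regularity beyond $L^2$ is available and every gain must come from the partial smoothing of $P_\beta^{-1}$ in $y$ alone.  Careful bookkeeping of where this smoothing is spent, combined with Minkowski's inequality and anisotropic embeddings of the form $L^2_xH^1_y\hookrightarrow L^2_xL^q_y$, is what forces the dichotomy $k\le 2$ versus $k>2$ in the hypothesis and determines the sharp $p$-thresholds above.
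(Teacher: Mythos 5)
The paper does not actually prove this theorem: it is quoted from \cite{AAS} (Theorems 1.1 and 1.3 and Proposition 4.2 there), with only the conservation of $E$ addressed in the text, via the remark that it follows from continuous dependence on the data together with a regularization argument as in Theorem 3.3.9 of \cite{C}. Your overall strategy --- inverting $P_\beta$, generating a unitary group on $X_{k,0}$ by Stone's theorem for the multiplier $iP_\beta^{-1}\Delta$, Duhamel plus a fixed point, and deriving the conservation laws by pairing with $\bar u$ and $\bar u_t$ on smooth solutions and passing to the limit --- is broadly the strategy of \cite{AAS}, and the conservation-law and globalization parts of your sketch are essentially right (note that in $X_{k,0}$ the conserved quantity $M$ is itself equivalent to the squared norm, while in $X_{k,1}$ one needs $E$ together with the anisotropic Gagliardo--Nirenberg inequality \eqref{aniso}, not the isotropic one, to control $\nabla_x u$ when $4/d\le p<4/(d-k)$).

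The genuine gap is in the local theory. A Picard iteration in $C([0,T];X_{k,0})$ requires $u\mapsto P_\beta^{-1}(|u|^pu)$ to be locally Lipschitz from $X_{k,0}=L^2_xH^1_y$ to itself, and this fails for \emph{every} $p>0$ when $k<d$: if $\|u(x,\cdot)\|_{H^1_y}\in L^2_x$, then $\||u|^pu(x,\cdot)\|_{L^r_y}\le C\|u(x,\cdot)\|_{H^1_y}^{p+1}$ lies only in $L^{2/(p+1)}_x$, and $P_\beta^{-1}$, which smooths only in the $y$-variables, cannot repair the loss of integrability in $x$. No bookkeeping of Minkowski's inequality, anisotropic embeddings, and $y$-smoothing will close the contraction in $C([0,T];X_{k,0})$. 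What is actually needed --- and what \cite{AAS} establishes --- are Strichartz estimates for the group generated by $iP_\beta^{-1}\Delta$, coming from dispersion in the unregularized $x$-directions (for each fixed $\xi_y$ the symbol is a nondegenerate paraboloid in $\xi_x$); the threshold $p<4/(d-k)$ is precisely the mass-critical exponent for the Schr\"odinger equation in $d-k$ dimensions and cannot be reached without them. The same issue affects $X_{k,1}$ once $d-k\ge 3$. So your proposal correctly locates the difficulty but the tools you propose to resolve it are insufficient; only the case $k=d$, where $P_\beta^{-1}\Delta$ is bounded, goes through as you describe.
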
 
   
    For proof of the assertions of the preceding theorem concerning well-posedness in $X_{k,0}$ and the conservation of the functional $M$, the reader is referred to Theorems 1.1 and 1.3 of \cite{AAS};  for well-posedness in $X_{k,1}$, see Proposition 4.2 of \cite{AAS}. As noted in \cite{AAS}, similar well-posedness results are available in $X_{k,l}$ for $l \ge 2$. 
    To prove the assertion of the theorem regarding the conservation of the functional $E$, one notes that by the results of \cite{AAS}, to each choice of initial data  if $u_0 \in H^1 \cap X_{k,1}$, there corresponds a unique solution of \eqref{rNLS} in $C(\mathbb R, H^1)$, and the solution depends continuously on the data.  The conservation of $E$ then follows from a standard argument which is similar to the proof given in Theorem 3.3.9 of \cite{C} for the NLS equation. 
   
 We define a {\it bound-state solution} of \eqref{rNLS} to be any solution of the form $u(\mathbf x,t)=\varphi(\mathbf x)e^{i \omega t}$, 
where $\varphi\in H^1$ and $\omega \in \mathbb R$; we refer to $\varphi$ as a {\it bound-state
profile}, and to $\omega$ as its {\it phase speed}. Thus $\varphi$ is a bound-state profile with phase speed $\omega$ if and only if $\varphi \in H^1$ is a solution of 
\begin{equation}
-\Delta \varphi + \omega P_\beta \varphi =|\varphi|^p \varphi. 
\label{phieqn}
\end{equation}  

In order to discuss the existence and properties of solutions to \eqref{phieqn}, we first observe that if $\omega > 0$, and if $\varphi \in H^1$ is related to $R \in H^1$ by
$$
\varphi(\mathbf x) =\varphi(x,y)= R(x, (1+\beta\omega)^{-1/2}y),
$$ 
then $\varphi$ is a solution of \eqref{phieqn} if and only if
\begin{equation}
-\Delta R + \omega R = |R|^p R.
\label{omegaR}
\end{equation}
(Here, as throughout the paper, we use the convention that the notation $f(x,y)$ stands for $f(x)$ if $k=0$ and $f(y)$ if $k=d$.)
Furthermore, $R$ is a solution of \eqref{omegaR} if and only if the function $Q \in H^1$ defined by 
$$
R(\mathbf x) = \omega^{1/p}Q(\omega^{1/2}\mathbf x)
$$
is a solution of the equation
\begin{equation}
- \Delta Q +Q  = |Q|^p Q.
\label{phieqn1}
\end{equation}
We say that a function $Q$ on $\mathbb R^d$ is {\it nontrivial} if $Q(\mathbf x) \ne 0$ for some $\mathbf x \in \mathbb R^d$.  Concerning nontrivial nonnegative solutions of \eqref{phieqn1}, we have the following well-known result.

\begin{lem} 
Suppose $d \ge 1$, and suppose $0<p < p_c(d)$, where $p_c(d)$ is as defined in \eqref{defpc}.    Then equation \eqref{phieqn1} 
has a radial solution $Q_{d,p}$ on $\mathbb R^d$ which satisfies $Q_{d,p}(\mathbf x)>0$ for all $\mathbf x \in \mathbb R^d$.  Moreover, every nontrivial nonnegative solution of \eqref{phieqn1} in $H^1$ must be a translate of $Q_{d,p}$.  
\label{Rdpdef}
\end{lem}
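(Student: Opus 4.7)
The plan is to combine a variational existence argument with the classical Gidas-Ni-Nirenberg symmetry theorem and Kwong's uniqueness theorem for ground states. For existence, I would produce $Q_{d,p}$ as a Schwarz-symmetric minimizer of
\begin{equation*}
I := \inf\left\{ \int \left(|\nabla u|^2 + u^2\right)\,d\mathbf{x} : u \in H^1(\mathbb{R}^d),\ \int |u|^{p+2}\,d\mathbf{x}=1\right\}.
\end{equation*}
The subcritical hypothesis $0 < p < p_c(d)$ ensures the continuous embedding $H^1 \hookrightarrow L^{p+2}$, so $I>0$. This embedding is not compact on all of $H^1(\mathbb{R}^d)$, but Schwarz symmetrization shows the infimum is attained among radial functions, and on the radial subspace the Strauss lemma provides the compact embedding into $L^{p+2}$ that allows a minimizing sequence of nonnegative radial functions to be passed to the limit. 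A Lagrange-multiplier argument and a rescaling $Q_{d,p} = \lambda u_{*}$ of the minimizer $u_{*}$ then produce a nonnegative radial solution of \eqref{phieqn1}. Elliptic bootstrap regularity gives $Q_{d,p} \in C^\infty$, and the strong maximum principle applied to $-\Delta Q_{d,p} + Q_{d,p} = Q_{d,p}^{p+1}$ promotes nonnegativity to strict positivity.

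For the classification statement, let $Q \in H^1$ be any nontrivial nonnegative solution of \eqref{phieqn1}. Bootstrap regularity gives $Q \in C^2$, the strong maximum principle forces $Q>0$ throughout $\mathbb{R}^d$, and standard decay estimates based on the resolvent of $-\Delta + 1$ yield $Q(\mathbf{x})\to 0$ as $|\mathbf{x}|\to\infty$. The Gidas-Ni-Nirenberg moving-plane argument then applies and produces a point $\mathbf{x}_0 \in \mathbb{R}^d$ about which $Q$ is radial and strictly decreasing in $|\mathbf{x}-\mathbf{x}_0|$. The translated profile $\tilde Q(\mathbf{x}) := Q(\mathbf{x}+\mathbf{x}_0)$ therefore solves the radial ODE
\begin{equation*}
-\tilde Q''(r) - \frac{d-1}{r}\tilde Q'(r) + \tilde Q(r) = \tilde Q(r)^{p+1},\qquad \tilde Q'(0)=0,\quad \tilde Q(r)\to 0.
\end{equation*}
Kwong's uniqueness theorem for decaying positive solutions of this ODE then forces $\tilde Q = Q_{d,p}$, so $Q$ is a translate of $Q_{d,p}$.

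The main obstacle is the uniqueness step. Existence is a standard exercise in the calculus of variations once the subcritical Sobolev embedding and radial compactness are in hand, and symmetrization via moving planes is by now a textbook technique. The genuinely hard ingredient is Kwong's theorem: because the radial ODE is not variational, uniqueness of its decaying positive solution requires a delicate shooting analysis combined with a monotonicity argument in the phase plane. I would invoke this as a known result rather than reprove it.
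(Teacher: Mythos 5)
Your outline is correct and is essentially the same argument the paper relies on: the paper does not prove this lemma itself but cites Theorems 8.1.4--8.1.6 of Cazenave's book, whose proofs follow exactly your route (variational existence of a positive radial solution, Gidas--Ni--Nirenberg radial symmetry of any positive decaying solution, and Kwong's ODE uniqueness theorem). The only point worth flagging is $d=1$, where the Strauss compact-embedding step fails; there one instead uses the explicit solution $Q_{1,p}(x)=\left(\tfrac{p+2}{2}\right)^{1/p}\sech^{2/p}\left(\tfrac{px}{2}\right)$ and elementary ODE phase-plane uniqueness, which does not affect the overall structure of your argument.
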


\begin{proof} See Theorems 8.1.4, 8.1.5, and 8.1.6 of \cite{C}.  (Note that although the assertion that every nontrivial nonnegative solution of \eqref{phieqn1} in $H^1$ is a translate of $Q_{d,p}$ is not explicitly stated in these theorems from \cite{C}, it is established in the proofs given there.)
\end{proof}

From the above considerations, we immediately deduce the following.

\begin{lem} 
Suppose $d \ge 1$, $0 \le k \le d$, and $0< p < p_c(d)$.  For each $\omega > 0$ and $\beta > 0$ there exists a nontrivial nonnegative solution $\varphi_{\omega, \beta, d, k, p}$ to \eqref{phieqn}, given by
\begin{equation} 
\varphi_{\omega,\beta, d, k, p}(x,y)=\omega^{1/p}Q_{d,p}(\omega^{1/2} x,\omega^{1/2} (1+\beta\omega)^{-1/2}y).
\label{defphi}
\end{equation} 
 Every nontrivial nonnegative solution to \eqref{phieqn} is of the form
$$\varphi(\mathbf x) = \varphi_{\omega,\beta, d, k, p}(\mathbf x + \mathbf x_0),$$
 for some $\mathbf x_0 \in \mathbb R^d$.
 \label{groundunique}
\end{lem}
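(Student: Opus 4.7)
My plan is to unfold the two changes of variable set up in the paragraphs immediately preceding the lemma, reducing everything to Lemma \ref{Rdpdef}. The whole argument is essentially bookkeeping, and I do not anticipate any substantive obstacle.

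For existence, I would take $Q := Q_{d,p}$ as furnished by Lemma \ref{Rdpdef}, define $R(\mathbf{x}) := \omega^{1/p} Q(\omega^{1/2}\mathbf{x})$, and then $\varphi(x,y) := R(x,(1+\beta\omega)^{-1/2}y)$. The two equivalences just recited in the paper then guarantee that $\varphi$ solves \eqref{phieqn}, and each scaling preserves membership in $H^1$, nonnegativity, and nontriviality. Substituting through the definitions gives exactly the formula \eqref{defphi}.

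For uniqueness, given any nontrivial nonnegative $\varphi \in H^1$ solving \eqref{phieqn}, I would invert the two substitutions, setting $R(x,y) := \varphi(x,(1+\beta\omega)^{1/2}y)$ and then $Q(\mathbf{x}) := \omega^{-1/p} R(\omega^{-1/2}\mathbf{x})$. Again the same equivalences ensure that $Q \in H^1$ is a nontrivial nonnegative solution of \eqref{phieqn1}, and Lemma \ref{Rdpdef} yields $Q(\mathbf{x}) = Q_{d,p}(\mathbf{x} - \mathbf{x}_Q)$ for some $\mathbf{x}_Q \in \mathbb{R}^d$. The only step that deserves attention is tracking this single translation back through the two anisotropic rescalings: writing $\mathbf{x}_Q = (a,b)$ with $a \in \mathbb{R}^{d-k}$ and $b \in \mathbb{R}^k$, a direct substitution using the $y$-scaling factor $(1+\beta\omega)^{-1/2}$ and the overall factor $\omega^{1/2}$ identifies the explicit shift $\mathbf{x}_0 \in \mathbb{R}^d$ for which $\varphi(\mathbf{x}) = \varphi_{\omega,\beta,d,k,p}(\mathbf{x} + \mathbf{x}_0)$, completing the proof.
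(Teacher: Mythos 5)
Your proposal is correct and follows exactly the route the paper intends: the paper gives no separate proof of this lemma, stating only that it is ``immediately deduced'' from the two changes of variables set up just before it, which is precisely the unfolding you carry out. Your added care in tracking the translation $\mathbf{x}_Q$ back through the anisotropic rescalings is the one detail the paper leaves implicit, and it goes through since both scalings are invertible diagonal linear maps.
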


For brevity of notation, in what follows we will drop the subscripts $\beta$, $d$, $k$, and $p$ when the values of these quantities are clear from the context, and refer to $\varphi_{\omega,\beta, d, k, p}$ simply as $\varphi_{\omega}$.
 
 Also, in what follows, we will restrict consideration to the cases when $\omega > 0$ and $0 < p < p_c(d)$, unless otherwise specified.  Indeed, the assumption that $0 < p < p_c(d)$ necessarily entails that $\omega > 0$:  every solution $\varphi \in H^1$ of \eqref{phieqn} must satisfy the Pohozaev-type identities (cf.\ \cite{C}, p. 258)
$$
 \begin{aligned}
 \int \left[|\nabla \varphi|^2 + \beta \omega |\nabla_y \varphi |^2 \right]\ d\mathbf x + \omega \int |\varphi|^2\ d\mathbf x &= \int |\varphi|^{p+2}\ d\mathbf x \\
(d-2)  \int \left[|\nabla \varphi|^2 + \beta \omega |\nabla_y \varphi |^2 \right]\ d\mathbf x +  \omega d \int |\varphi|^2\ d\mathbf x &= \frac{2d}{p+2}\int |\varphi|^{p+2}\ d\mathbf x,
 \end{aligned}
$$
 from which it follows easily that if $0 < p < p_c(d)$ and $\varphi$ is nontrivial, then $\omega$ must be greater than zero.   
 
An important functional for the study of bound-state solutions is the {\it action functional} $S_\omega(u)$ defined on $H^1$, for a given $\omega \in \mathbb R$, by
\begin{equation*}
S_\omega(u):=E(u)+\omega M(u).
 \end{equation*}

\begin{lem}  Suppose $d \ge 1$, $0 \le k \le d$, $p>0$, $\beta>0$, and $\omega \in \mathbb R$.   
 A function $v \in H^1$ is a critical point for the functional $S_\omega(u)$
 if and only if it is a bound-state profile for \eqref{rNLS} with phase speed $\omega$.
 \label{critpoint}
 \end{lem}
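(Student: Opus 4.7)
The plan is to compute the Fréchet derivative of $S_\omega$ on $H^1(\mathbb R^d)$, viewed as a real Banach space (since $S_\omega$ is real-valued on a complex function space), and identify the resulting Euler--Lagrange equation as \eqref{phieqn}.

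First, I would verify that $S_\omega$ is of class $C^1$ on $H^1$. The quadratic parts $\int |\nabla u|^2$, $\int |u|^2$, and $\beta \int |\nabla_y u|^2$ are smooth on $H^1$ with derivatives computable by elementary polarization. For the potential term $\int |u|^{p+2}\, d\mathbf x$, the hypothesis $0 < p < p_c(d)$ combined with the Sobolev embedding $H^1 \hookrightarrow L^{p+2}$ guarantees that the functional is well-defined and continuously differentiable, with Gateaux derivative
\begin{equation*}
\left.\frac{d}{d\epsilon}\right|_{\epsilon=0}\int |v+\epsilon h|^{p+2}\, d\mathbf x = (p+2)\, \Re\!\int |v|^p v\, \overline h\, d\mathbf x.
\end{equation*}
Assembling all contributions, the derivative of $S_\omega$ at $v$ is
\begin{equation*}
\langle S_\omega'(v), h\rangle = \Re\!\int \left[\nabla v \cdot \overline{\nabla h} + \omega v\, \overline h + \beta\omega\, \nabla_y v \cdot \overline{\nabla_y h} - |v|^p v\, \overline h\right] d\mathbf x.
\end{equation*}

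Next, I would test against $h \in C_c^\infty(\mathbb R^d)$ and integrate by parts, transferring $\nabla$ and $\nabla_y$ off of $h$, to obtain
\begin{equation*}
\langle S_\omega'(v), h\rangle = \Re\!\int \left[-\Delta v + \omega P_\beta v - |v|^p v\right] \overline h\, d\mathbf x.
\end{equation*}
If $v$ is a critical point, this vanishes for every $h \in C_c^\infty(\mathbb R^d)$. Setting $g = -\Delta v + \omega P_\beta v - |v|^p v$ (a priori a distribution in $H^{-1}$) and applying the identity with $h$ replaced by $ih$ converts $\Re \int g \overline h = 0$ into $\Im \int g \overline h = 0$, so the full complex pairing $\int g\, \overline h$ vanishes for every $h \in C_c^\infty$. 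Hence $g \equiv 0$ in the sense of distributions, which is precisely \eqref{phieqn}. The converse direction is immediate by reading the same computation in reverse and noting that $C_c^\infty$ is dense in $H^1$.

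The only point requiring care is the $C^1$-regularity of the nonlinear term for non-integer $p$; I would handle this with the standard pointwise estimate $\bigl||a+b|^{p+2} - |a|^{p+2} - (p+2)|a|^p \Re(a\overline b)\bigr| \lesssim |a|^p |b|^2 + |b|^{p+2}$, combined with Hölder's inequality and the Sobolev embedding to justify passing to the limit inside the integral. Everything else amounts to routine integration by parts, so I do not anticipate any substantial obstacle.
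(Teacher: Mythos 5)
Your proposal is correct and follows essentially the same route as the paper, which simply records the formula $S_\omega'(u) = (-\Delta u - |u|^p u) + \omega(u - \beta\Delta_y u)$ and concludes; you supply the $C^1$ verification, the real-versus-complex pairing step, and the distributional bootstrap that the paper leaves implicit. Note only that you quietly use $0 < p < p_c(d)$ where the lemma states merely $p > 0$; this strengthening is needed for the Sobolev embedding $H^1 \hookrightarrow L^{p+2}$ and is implicitly assumed by the paper as well (it restricts to this range when asserting that $E$ is well-defined on $H^1$), so it is not a defect of your argument.
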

 
 \begin{proof}
The gradient of $S_\omega$ at $u$ is given by
$$
S_\omega'(u)=E'(u) + \omega M'(u)= (-\Delta u  - |u|^pu) + \omega(u-\beta \Delta_y u).
$$ 
Therefore  $v$  is a critical point of $S_\omega$ if and only if it satisfies equation \eqref{phieqn}, which is the defining equation for bound-state profiles of \eqref{rNLS} with phase speed $\omega$. 
  \end{proof} 
  
  Among the bound-state profiles of \eqref{rNLS}, we distinguish those which are minimizers for a certain variational problem.   For each $m > 0$, define
$$
I_m = \inf \left\{E(u): M(u) = m\right\}
$$  
and 
$$
G_m = \left\{ u \in H^1: E(u) = I_m \ \text{and $M(u)=m$}\right\}.
$$ 
(Note that $G_m$ could be empty.)  If $u$ is any element of $G_m$, then $u$ must satisfy the Euler-Lagrange equation $E'(u) + \omega M'(u)= 0$ for some $\omega \in \mathbb R$. It thus follows from Lemma \ref{critpoint}  that $G_m$ is a subset of the set of bound-state profiles of \eqref{rNLS}.  We call the elements of $G_m$ {\it ground-state profiles}.  
 
 Concerning the structure of ground-state profiles, we have the following result, whose proof is deferred to Section \ref{sec:notprelim}.
 
 \begin{lem}  Let $d \ge 1$, $0 \le k \le d$, $p>0$,  $\beta > 0$,  and $m>0$.  If $I_m < 0$ and $G_m$ is non-empty, then every $v \in G_m$ has the form  
 $$
 v(\mathbf x)=e^{i \theta} \varphi_\omega(\mathbf x + \mathbf x_0)
 $$
  for some $\omega > 0$, some $\theta \in \mathbb R$, and some $\mathbf x_0 \in \mathbb R^d$. 
 \label{mineqphi}
 \end{lem}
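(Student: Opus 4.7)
Let $v \in G_m$. By Lemma \ref{critpoint} and the Lagrange multiplier rule applied to $v$ as a minimizer of $E$ on the constraint surface $\{M = m\}$, $v$ satisfies \eqref{phieqn} for some $\omega \in \mathbb R$. The strategy has three stages: (i) show that $|v|$ also lies in $G_m$; (ii) apply Lemma \ref{groundunique} to identify $|v|$ as a translate of some $\varphi_{\omega'}$; and (iii) use the equality case in Kato's inequality to conclude that $v$ differs from $|v|$ by a constant phase.

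For stage (i), I would start from the pointwise inequalities $|\nabla|v|| \le |\nabla v|$ and $|\nabla_y|v|| \le |\nabla_y v|$, valid for any $H^1$ function, which give $E(|v|) \le E(v) = I_m$ and $M(|v|) \le M(v) = m$. The key point is to rule out strict inequality $M(|v|) = m' < m$. If this held, setting $\lambda = (m/m')^{1/2} > 1$ and $w = \lambda|v|$ would yield $M(w) = m$ and
\[
E(w) = \lambda^2 E(|v|) + \frac{\lambda^2 - \lambda^{p+2}}{p+2}\int|v|^{p+2}\, d\mathbf x < \lambda^2 I_m,
\]
the strict inequality coming from $v$ being nontrivial and $\lambda^{p+2} > \lambda^2$. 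Since $I_m < 0$ and $\lambda^2 > 1$, one has $\lambda^2 I_m < I_m$, so $E(w) < I_m$, contradicting the definition of $I_m$ together with $M(w) = m$. Hence $M(|v|) = m$; combined with $E(|v|) \le I_m$, this forces $|v| \in G_m$ together with the equalities $\int|\nabla v|^2\, d\mathbf x = \int|\nabla|v||^2\, d\mathbf x$ and $\int|\nabla_y v|^2\, d\mathbf x = \int|\nabla_y |v||^2\, d\mathbf x$.

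For stages (ii)--(iii), Lemma \ref{critpoint} applied to $|v| \in G_m$ yields some $\omega' \in \mathbb R$ such that $|v|$ satisfies \eqref{phieqn}; the Pohozaev identities displayed just after Lemma \ref{groundunique}, combined with $p < p_c(d)$, force $\omega' > 0$. Lemma \ref{groundunique} then identifies $|v|$ as $\varphi_{\omega'}(\cdot+\mathbf x_0)$ for some $\mathbf x_0 \in \mathbb R^d$, and in particular $|v|>0$ throughout $\mathbb R^d$. Elliptic regularity for \eqref{phieqn} makes $v$ smooth, so one can write $v = |v|\,e^{iS}$ for a smooth real-valued $S$ on $\mathbb R^d$. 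The pointwise identity $|\nabla v|^2 = |\nabla|v||^2 + |v|^2|\nabla S|^2$, combined with the equality of integrals from stage (i), forces $\nabla S \equiv 0$; since $\mathbb R^d$ is connected, $S$ reduces to a constant $\theta \in \mathbb R$, and $v = e^{i\theta}\varphi_{\omega'}(\cdot+\mathbf x_0)$.

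The main obstacle is the scaling step in stage (i): the Kato-type inequalities alone give only $M(|v|) \le m$, and it is only through the rescaling --- precisely where the hypothesis $I_m < 0$ enters, via the strict inequality $\lambda^2 I_m < I_m$ for $\lambda > 1$ --- that this inequality can be promoted to equality, making $|v|$ itself a minimizer.
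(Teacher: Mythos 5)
Your proof is correct, but it departs from the paper's argument at both of its key steps, so a comparison is worthwhile. The paper avoids your stage-(i) rescaling entirely by working not with $|v|$ but with $f = |\Re v| + i\,|\Im v|$: by Theorem 6.17 of \cite{LL} one has $|\nabla f| = |\nabla v|$ and $|\nabla_y f| = |\nabla_y v|$ pointwise, so $E(f)=E(v)$ and $M(f)=M(v)$ exactly and $f \in G_m$ with no further work, whereas your route through $|v|$ only gives $M(|v|)\le m$ and needs the dilation argument --- correctly spending the hypothesis $I_m<0$ --- to promote this to equality. For recovering the phase, the paper does not use the equality case of the diamagnetic inequality; it instead rescales to the equation $-\Delta g + \omega g = |g|^p g$ and invokes Lemma 8.1.12 of \cite{C} on the linear operator $-\Delta + \omega - |g|^p$ to conclude that $\Re v$ and $\Im v$ are each constant multiples of one positive function, from which the constant phase drops out. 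Your diamagnetic-equality argument is more elementary and self-contained, at the price of needing $v$ continuous and $|v|$ locally bounded below to run the polar decomposition $v=|v|e^{iS}$. Two points to tighten: (a) before appealing to elliptic regularity for $v$'s Euler--Lagrange equation, you must check that its multiplier $\tilde\omega$ satisfies $1+\beta\tilde\omega>0$ (indeed $\tilde\omega>0$), since otherwise $-\Delta+\tilde\omega P_\beta$ need not be elliptic; this follows by pairing the equation with $\bar v$ and using $E(v)=I_m<0$, which is exactly the computation the paper performs for $f$. (b) Your Pohozaev route to $\omega'>0$ uses $p<p_c(d)$, which is not among the lemma's stated hypotheses ($p>0$ only); the pairing argument in (a) yields $\omega'>0$ from $I_m<0$ alone, matching the paper (though the conclusion implicitly requires $p<p_c(d)$ anyway, since $\varphi_\omega$ is defined via Lemma \ref{groundunique} only in that range).
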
 
 
   Although, by Lemma \ref{mineqphi}, every ground-state profile is (up to phase shift) nonnegative, not all the nonnegative bound-state profiles given by Lemma \ref{groundunique} are ground states.  As we will see below (and as was already observed in \cite{Z} for the case when $k=d=1$),  for some values of  the parameters $\omega$, $\beta$, $d$, $k$, $p$, the function $\varphi_\omega$ is a local minimizer of $E$ subject to the constraint that $M$ be held constant, but not a global minimizer; while for other values of the parameters, $\varphi_\omega$ is not even a local minimizer.       
 
 \begin{defn} We say that a set $G \subseteq X_{k,1}$ is {\it stable} (for \eqref{rNLS} in the $H^1$ norm) if for every $\epsilon > 0$ there exists $\delta > 0$ such that if $u_0 \in X_{k,1}$ and $\|u_0 - g_0 \|_{H^1} < \delta$ for some $g_0 \in G$, then the solution $u(t)$ of \eqref{rNLS} with $u(0)=u_0$ satisfies
$$
\inf_{g \in G} \|u(t) - g\|_{H^1} < \epsilon
$$
for all $t \in \mathbb R$.
\label{defsetstable}
\end{defn}

 The following theorem, giving a sufficient condition for the stability of the set $G_m$, is proved in Section \ref{sec:orbital stability}.  We say that a sequence $\{u_n\}$ in $H^1$ is a {\it minimizing sequence} for $I_m$ if $M(u_n)=m$ for all $n \in \mathbb N$, and $\displaystyle \lim_{n \to \infty} E(u_n)= I_m$.

 \begin{thm} Let $d \ge 1$,   $0 \le k \le d$, $0 < p < p_c(d)$, $\beta > 0$,   and $m>0$.  If 
 \begin{equation}
 -\infty < I_m < 0,
 \label{neccond}
 \end{equation}
  then $G_m$ is a non-empty subset of $X_{k,1}$, and 
 every minimizing sequence $\{u_n\}$ for $I_m$ has a subsequence $\{u_{n_j}\}$ such that   
\begin{equation}
\lim_{j \to \infty} \inf_{v \in G_m}\|u_{n_j}-v\|_1 = 0.
\label{closetoG}
\end{equation} 
Furthermore, the set $G_m$ is stable for \eqref{rNLS} in the $H^1$ norm.
\label{precompact}
 \end{thm}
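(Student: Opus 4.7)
The plan is to implement a concentration-compactness argument in the spirit of Cazenave--Lions \cite{CL}, adapted to the fact that here the conserved quantity $M$ mixes an $L^2$ norm with (when $k \ge 1$) a $y$-derivative norm, while $E$ contains the full Dirichlet energy.

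First I would verify that every minimizing sequence $\{u_n\}$ is bounded in $X_{k,1}$. The constraint $M(u_n)=m$ directly gives boundedness of $\|u_n\|_2$ and of $\|\nabla_y u_n\|_2$; boundedness of $\|\nabla u_n\|_2$ follows from $E(u_n)\to I_m$ combined with a Gagliardo--Nirenberg estimate to control $\|u_n\|_{p+2}^{p+2}$, closed thanks to the hypothesis $I_m>-\infty$.

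The core step is Lions' concentration-compactness alternative applied to $\rho_n=|u_n|^2$. Vanishing is excluded by the sign hypothesis $I_m<0$: local $L^2$-decay would imply $\|u_n\|_{p+2}\to 0$ and hence $\liminf E(u_n)\ge 0$. Excluding dichotomy is the main obstacle, and I would handle it by establishing the strict subadditivity
\[
I_{m_1+m_2} < I_{m_1}+I_{m_2}\qquad\text{for all } m_1,m_2>0.
\]
The amplitude scaling $u\mapsto\sqrt{\lambda}\,u$, which is the natural dilation respecting $M$, satisfies
\[
M(\sqrt{\lambda}\,u)=\lambda M(u),\qquad E(\sqrt{\lambda}\,u)=\lambda E(u)-\frac{\lambda^{(p+2)/2}-\lambda}{p+2}\|u\|_{p+2}^{p+2},
\]
and for a near-minimizer $u$ of $I_m$ one has $\|u\|_{p+2}$ bounded below (otherwise $E(u)\ge 0$, contradicting $I_m<0$). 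Taking $\lambda>1$ yields $I_{\lambda m}<\lambda I_m$, so $m\mapsto I_m/m$ is strictly decreasing, and subadditivity follows by a short algebraic argument. Once vanishing and dichotomy are ruled out, a subsequence $u_{n_j}(\cdot+\mathbf x_j)$ is tight, converges weakly in $H^1$ to some $v$, and strongly in $L^{p+2}(\mathbb R^d)$. Weak lower semicontinuity of $M$ and of the Dirichlet energy then force $E(v)=I_m$ and $M(v)=m$, so $v \in G_m$ and the weak convergence upgrades to strong $H^1$ convergence, proving both non-emptiness of $G_m$ and \eqref{closetoG}. The inclusion $G_m\subseteq X_{k,1}$ then follows from Lemma \ref{mineqphi}, which identifies each $v\in G_m$ with a translate and phase rotation of an explicit smooth, rapidly decaying $\varphi_\omega$ given by \eqref{defphi}.

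For the stability assertion I would argue by contradiction: if stability fails, there exist $\epsilon_0>0$, data $u_0^n\in X_{k,1}$ with $\mathrm{dist}_{H^1}(u_0^n, G_m)\to 0$, and times $t_n$ with $\mathrm{dist}_{H^1}(u^n(t_n),G_m)\ge\epsilon_0$. Global well-posedness in $X_{k,1}$ together with conservation and $H^1$-continuity of $E$ and $M$ give $E(u^n(t_n))\to I_m$ and $M(u^n(t_n))\to m$; the scalar rescaling $u^n(t_n)\mapsto (m/M(u^n(t_n)))^{1/2}u^n(t_n)$, which perturbs by $o(1)$ in $H^1$, produces an honest minimizing sequence for $I_m$, contradicting \eqref{closetoG}. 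The delicate point throughout is that partial regularization breaks the usual spatial dilation symmetry in the $x$-directions; what saves the argument is that the amplitude scaling alone suffices, provided one first uses $I_m<0$ to keep $\|u_n\|_{p+2}$ uniformly bounded below.
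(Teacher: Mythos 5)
Your proposal is correct and follows the same overall strategy as the paper: a concentration-compactness argument with strict subadditivity obtained from the amplitude scaling $u\mapsto\sqrt\lambda\,u$ (your identity for $E(\sqrt\lambda\,u)$ and the lower bound on $\|u\|_{p+2}$ for near-minimizers are exactly the paper's Lemmas \ref{sublinear} and \ref{deltalem}), the Lions-type inverse Sobolev estimate to rule out vanishing, and the same rescaling-to-a-minimizing-sequence contradiction for stability. The one substantive difference is the density fed into the concentration-compactness lemma: you take $\rho_n=|u_n|^2$, whereas the paper takes $\rho_n=\tfrac12\bigl(|u_n|^2+\beta|\nabla_y u_n|^2\bigr)$, i.e.\ the density of the constrained functional $M$. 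The paper's choice makes the dichotomy parameter $\alpha$ directly an $M$-value, so the split pieces satisfy $M(g_j)\to\alpha$, $M(h_j)\to m-\alpha$ and the subadditivity $I_m<I_\alpha+I_{m-\alpha}$ applies verbatim. With your choice the splitting controls only the $L^2$ masses of the pieces; to invoke subadditivity you must additionally pass to a subsequence along which $M(g_j)$ and $M(h_j)$ converge, check that their limits sum to $m$ (the cutoff errors in $\int|\nabla_y(\eta u_j)|^2$ are $O(1/R)$ by the product rule and the $H^1$ bound), and check that both limits are strictly positive, which follows from $M\ge\tfrac12\|\cdot\|_2^2$ and the strict positivity of the $L^2$ dichotomy parameter. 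You also need to normalize, since $\int|u_n|^2$ is not constant along the sequence (though it is bounded below away from zero, again by the $\|u_n\|_{p+2}\ge\delta$ bound). All of this is patchable, but the paper's density is the cleaner choice and is the one place where the partial regularization genuinely enters the compactness machinery. One smaller imprecision: weak lower semicontinuity alone gives only $M(v)\le m$, not $M(v)=m$; equality is forced by the scaling trick (if $M(v)<m$, then $\theta v$ with $\theta=\sqrt{m/M(v)}>1$ satisfies $E(\theta v)\le\theta^2 I_m<I_m$ since $I_m<0$, a contradiction), which you allude to elsewhere but should place explicitly at this step.
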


In the case $k=0$, when \eqref{rNLS} is the standard NLS equation, Theorem \ref{precompact} is a classic result of Cazenave and Lions \cite{CL} and Weinstein \cite{W2}.  In this case, one can see that if $0 < p < 4/d$, then for each $m>0$,  \eqref{neccond} is satisfied and there exists $\omega > 0$ such that
$$
G_m = \mathcal P_\omega, 
$$  
where
\begin{equation}
\mathcal P_\omega :=\left\{e^{i \theta} \varphi_\omega(\mathbf x + \mathbf x_0): \theta \in \mathbb R,\ \mathbf x_0 \in \mathbb R^d \right\}.
\label{defSomega}
\end{equation}
for some $\omega > 0$.  Our results also generalize those of Zeng \cite{Z}, who in his study of stability of solitary-wave solutions of the BBM equation \eqref{BBM} considered a variational problem that is essentially equivalent to the one we consider in the case $k=1$ and $d=1$.  The proof of Theorem \ref{precompact} uses techniques which are by now standard, but since we do not know of a general result which
covers the present situation, we have decided to include a complete proof here.

 The set $\mathcal P_\omega$, which is a $(d+1)$-parameter family of functions indexed by  $\theta$ and $\mathbf x_0$, bears a resemblance to the orbit of the ground-state solution $u(x,t) = e^{i\omega t}\varphi_\omega(\mathbf x + \mathbf x_0)$ of \eqref{rNLS}, which is by definition the one-parameter family of functions 
$$
\mathcal O_\omega =\left\{ e^{i\omega t}\varphi_\omega(\mathbf x + \mathbf x_0):  t \in \mathbb R\right\}.
$$ 
For this reason, when $\mathcal P_\omega$ is stable, it is sometimes said in the literature that the bound-state solution with profile $\varphi_\omega$ is {\it orbitally stable}, a term which is somewhat misleading, given that $\mathcal P_\omega$ and $\mathcal O_\omega$ are not the same.  At any rate, the stability of $\mathcal P_\omega$ turns out to be a useful first step in obtaining more detailed results on the asymptotic behavior of perturbed ground-state solutions. 

 In Theorems \ref{summaryk0} and \ref{summaryk1} below, we determine for each $d \ge 1$ and $0 \le k \le d$ the range of values of the exponent $p$ and the parameter $m$ for which the necessary condition \eqref{neccond} for stability holds. 
 Theorem \ref{summaryk0} recapitulates the classic theory for $k=0$.    In Theorem \ref{summaryk1}, treating the general case $k \ge 1$, we find that $G_m$ is stable when $0 < p < 4/d$ when $m > 0$   (as in the case $k=0$), but also that $G_m$ is stable when $4/d \le p < \min(4/(d-k),p_c(d))$,  if $m$ is sufficiently large. This contrasts with the case $k=0$, where it is known that if $p \ge 4/d$ then $G_m$ is unstable for every $m > 0$.  
 
 In contrast to the proof of Theorem \ref{precompact} in Section \ref{sec:orbital stability}, the material in Section \ref{sec:existstable} is mostly new, requiring analysis adapted to the partial regularization present in equation \eqref{rNLS}.  In particular, we require use of the anisotropic Sobolev inequality \eqref{aniso}, and of certain scaling arguments specific to \eqref{rNLS}.

   As shown above in Lemma \ref{mineqphi}, all ground-state solutions have nonnegative profile functions $\varphi$, 
 which are described in Theorem \ref{groundunique}.    However, not all standing-wave solutions
  with nonnegative profiles are ground-state solutions.   In section \ref{sec:GSS stability}, we consider
   the orbital stability of some bound-state solutions with nonnegative profiles which are not ground-state solutions.

 Define a function $m: (0,\infty) \to \mathbb R$ by
\begin{equation}
m(\omega) = M(\varphi_\omega),
\label{defm}
\end{equation}
where $\varphi_\omega$ is as defined in Lemma \ref{groundunique}. Weinstein \cite{W1, W2, W3} showed that if $k=0$, the set $\mathcal P_\omega$ defined in \eqref{defSomega} is stable for \eqref{NLS} in $H^1$, for all $\omega$ such that the derivative $m'=\frac{dm}{d\omega}$ satisfies
\begin{equation}
m'(\omega) > 0.
\label{VK}
\end{equation} 
  The condition \eqref{VK} had earlier appeared, in the context of a linear stability analysis of \eqref{NLS}, in the work of Vakhitov and Kolokolov \cite{K, VK}, and for this reason it is sometimes called the Vakhitov-Kolokolov stability criterion.   
 
 In Section \ref{sec:GSS stability}, we show that the condition \eqref{VK} is a sufficient condition for stability for the regularized equation \eqref{rNLS} in the case $k \ge 1$ as well:

\begin{thm}  Let $d \ge 1$,   $0 \le k \le d$,  $0 < p < p_c(d)$, and $\beta > 0$.   For $\omega > 0$,   let $\varphi_\omega$ be the unique nonnegative radial solution of \eqref{phieqn} defined in Lemma \ref{groundunique}, let $\mathcal P_\omega$ be the set defined in \eqref{defSomega}, and define $m(\omega)$ by \eqref{defm}.  If $m'(\omega)>0$, then $\mathcal P_\omega$ is stable for \eqref{rNLS} in the $H^1$-norm, in the sense of Definition \ref{defsetstable}.
\label{GSSstabilitythm}
\end{thm}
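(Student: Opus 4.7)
The plan is to adapt the Lyapunov-functional argument of Weinstein \cite{W1, W2, W3} to the partially regularized setting. Since $\varphi_\omega$ is a critical point of the conserved action $S_\omega = E + \omega M$ (Lemma~\ref{critpoint}), the second variation $S_\omega''(\varphi_\omega)$ splits, under the decomposition $v = v_1 + i v_2$ into real and imaginary parts, as $\langle L_+ v_1, v_1\rangle + \langle L_- v_2, v_2\rangle$, where
\begin{equation*}
L_\pm w := -\Delta w + \omega w - \omega\beta\, \Delta_y w - c_\pm\, \varphi_\omega^p w, \qquad c_+ = p+1,\ c_- = 1.
\end{equation*}
The same change of variables $w(x,y) = W(x, (1+\beta\omega)^{-1/2} y)$ that relates \eqref{phieqn} to \eqref{omegaR} conjugates $L_\pm$ (up to a scalar factor in the inner product) to the standard NLS linearized operators $\widetilde L_\pm := -\Delta + \omega - c_\pm R^p$ about the profile $R$ from \eqref{omegaR}, so that the discrete spectra of $L_\pm$ and $\widetilde L_\pm$ coincide. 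Classical spectral theory for NLS ground states then gives: $L_+$ has exactly one simple negative eigenvalue, its kernel equals $\mathrm{span}\{\partial_{x_j}\varphi_\omega : 1 \le j \le d\}$, and the rest of its spectrum is bounded away from $0$; while $L_-$ is nonnegative with kernel $\mathrm{span}\{\varphi_\omega\}$.

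The hypothesis $m'(\omega) > 0$ would enter via the Weinstein coercivity estimate. Differentiating the bound-state equation $E'(\varphi_\omega) + \omega M'(\varphi_\omega) = 0$ with respect to $\omega$ yields $L_+\partial_\omega\varphi_\omega = -M'(\varphi_\omega)$, and pairing with $\partial_\omega\varphi_\omega$ gives $\langle L_+ \partial_\omega\varphi_\omega, \partial_\omega\varphi_\omega\rangle = -m'(\omega) < 0$. Since $L_+$ has exactly one negative eigenvalue, a standard min-max argument yields constants $C_\pm > 0$ such that
\begin{equation*}
\langle L_+ w_1, w_1\rangle \ge C_+ \|w_1\|_{H^1}^2, \qquad \langle L_- w_2, w_2\rangle \ge C_- \|w_2\|_{H^1}^2,
\end{equation*}
for all real $w_1 \in H^1$ with $\langle w_1, \partial_{x_j}\varphi_\omega\rangle_{L^2} = 0$ for $1 \le j \le d$ and $\langle w_1, M'(\varphi_\omega)\rangle_{L^2} = 0$, and all real $w_2 \in H^1$ with $\langle w_2, \varphi_\omega\rangle_{L^2} = 0$. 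The $H^1$-coercivity, as opposed to a merely anisotropic bound, is inherited from the corresponding estimate for $\widetilde L_\pm$ because the $H^1$-norms of $w$ and $W$ are equivalent for fixed $\omega, \beta$.

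With these ingredients I would conclude by a contradiction argument. Assume $\mathcal P_\omega$ is unstable: there exist $u_n^0 \in X_{k,1}$ with $u_n^0 \to \varphi_\omega$ in $H^1$, some $\epsilon > 0$, and times $t_n$ such that the $H^1$-distance from $u_n(t_n)$ to $\mathcal P_\omega$ equals $\epsilon$; such $t_n$ exist by continuity in $t$ of this distance, a consequence of global well-posedness in $X_{k,1} \subset H^1$. For $\epsilon$ small the implicit function theorem produces a decomposition $u_n(t_n) = e^{i\theta_n}\varphi_\omega(\cdot - \mathbf x_n) + v_n$ with $v_n$ orthogonal in $L^2$ to $i\varphi_\omega$ and to $\{\partial_{x_j}\varphi_\omega\}_{j=1}^d$, and with $\|v_n\|_{H^1} = \epsilon + o(1)$. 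Conservation of $E$ and $M$ combined with a Taylor expansion of $S_\omega$ gives
\begin{equation*}
S_\omega(u_n(t_n)) - S_\omega(\varphi_\omega) = S_\omega(u_n^0) - S_\omega(\varphi_\omega) = o(1),
\end{equation*}
while the same Taylor expansion on the left yields $\tfrac12\langle L_+ \mathrm{Re}\, v_n, \mathrm{Re}\, v_n\rangle + \tfrac12\langle L_- \mathrm{Im}\, v_n, \mathrm{Im}\, v_n\rangle + O(\|v_n\|_{H^1}^3)$. Conservation of $M$ also forces $\langle M'(\varphi_\omega), \mathrm{Re}\, v_n\rangle = O(\|v_n\|_{H^1}^2)$, so splitting $\mathrm{Re}\, v_n$ along $M'(\varphi_\omega)$ and its orthogonal complement contributes only at higher order. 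The coercivity estimate then forces $\|v_n\|_{H^1} \to 0$, contradicting $\|v_n\|_{H^1} = \epsilon + o(1)$.

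The main obstacle is verifying rigorously that the coercivity holds in the $H^1$-norm rather than in an anisotropic norm naturally adapted to the regularization; this requires the scaling in the first step to be carried out together with careful bookkeeping of the constraint $\langle w_1, M'(\varphi_\omega)\rangle = 0$, which in the partially regularized case $k \ge 1$ involves $y$-derivatives through the identity $M'(u) = u - \beta\Delta_y u$.
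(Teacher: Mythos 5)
Your proposal is correct and follows essentially the same route as the paper: the same splitting of $S_\omega''(\varphi_\omega)$ into the operators $L_\pm$ (the paper's $L_1,L_2$), the same anisotropic rescaling $y\mapsto(1+\beta\omega)^{-1/2}y$ to conjugate them to the standard NLS linearized operators and import their spectral properties, the same use of $m'(\omega)>0$ through $L_+\partial_\omega\varphi_\omega=-M'(\varphi_\omega)$ to get coercivity on the subspace orthogonal to $M'(\varphi_\omega)$, $i\varphi_\omega$, and $\{\partial_{x_j}\varphi_\omega\}$, and the same modulation-plus-conservation-law contradiction argument (the paper rescales $u_n(t_n)$ to put it exactly on the constraint manifold $M=M(\varphi_\omega)$ rather than tracking the $o(1)$ error in $\langle M'(\varphi_\omega),\Re v_n\rangle$, but these are interchangeable). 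The one point you flag as an obstacle --- $H^1$- rather than anisotropic coercivity --- is resolved exactly as you suggest, via the equivalence of norms under the fixed-$\omega,\beta$ rescaling.
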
 

Our proof of Theorem \ref{GSSstabilitythm}, like that given by Weinstein for \eqref{NLS}, involves an analysis of the spectrum of the second variation $S_\omega''(\varphi_\omega)$ of the operator $S_\omega$ at $\varphi_\omega$.   A guide to generalizing Weinstein's argument from the case $k=0$ to the case $k \ge 1$ is provided by Theorem 3.3 of \cite{GSS1}, which suggests the correct form of the key Lemma \ref{lemLupos} used here.  It requires some work, however, to verify that the framework of \cite{GSS1, GSS2} applies in our case.  Our exposition follows the lines of that given in \cite{LC} for the case $k=0$. 

 When $k=0$, it is easy to see by a scaling argument that \eqref{VK} holds for all $\omega > 0$ when $
p < 4/d$, and does not hold for any $\omega > 0$ when $p \ge 4/d$.   For $k \ge 1$ we can say the following.

\begin{cor}
Suppose $d \ge 1$ and $1 \le k \le d$.  
\begin{enumerate}

\item If $0 < p < 4/d$, then $\mathcal P_\omega$ is stable for every $\omega > 0$.

\item  If $\displaystyle \frac4d < p < \min \left(\frac{4}{d-k},p_c(d)\right)$, then there exists $\omega_0$ such that 
$m'(\omega) > 0$ for every $\omega > \omega_0$ and $m'(\omega)< 0$ for every $\omega \in (0,\omega_0)$. 
 Hence $\mathcal P_\omega$ is orbitally stable for every $\omega > \omega_0$.

\item  If $\displaystyle  \min \left(\frac{4}{d-k},p_c(d)\right) < p < p_c(d)$, then  $m'(\omega) < 0$ for $\omega$ 
sufficiently large and for $\omega$ sufficiently near zero.

\end{enumerate}
\label{whenneg}
\end{cor}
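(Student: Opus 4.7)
The plan is to reduce everything to an explicit computation of $m(\omega) = M(\varphi_\omega)$ using the formula \eqref{defphi}, and then to invoke Theorem \ref{GSSstabilitythm}. Changing variables $\tilde x = \omega^{1/2}x$ and $\tilde y = \omega^{1/2}(1+\beta\omega)^{-1/2}y$ in the two integrals defining $M$, one finds
$$
m(\omega) = A\,\omega^\alpha(1+\beta\omega)^{k/2} + B\,\omega^{\alpha+1}(1+\beta\omega)^{(k-2)/2},
$$
where $\alpha = 2/p - d/2$, $A = \tfrac12 \|Q_{d,p}\|_{L^2}^2 > 0$, and $B = \tfrac{\beta}{2}\|\nabla_{\tilde y}Q_{d,p}\|_{L^2}^2 > 0$ (strictly positive because $k \ge 1$ and $Q_{d,p}$ is a nontrivial radial function). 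The two sign parameters driving the analysis are $\alpha$ (positive iff $p < 4/d$) and $\alpha + k/2 = 2/p - (d-k)/2$ (positive iff $p < 4/(d-k)$).

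Differentiating and factoring out the positive quantity $\omega^{\alpha-1}(1+\beta\omega)^{(k-4)/2}$, a short algebraic manipulation shows that the sign of $m'(\omega)$ coincides with the sign of
$$
Q(s) = A\alpha + \Bigl[A(2\alpha+k/2) + (B/\beta)(\alpha+1)\Bigr] s + (\alpha+k/2)(A + B/\beta)\,s^2,
$$
evaluated at $s = \beta\omega$. Each of the three parts of the corollary then reduces to elementary analysis of this quadratic in $s$.

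In Case (1), all three coefficients of $Q$ are strictly positive (since $\alpha > 0$ forces $2\alpha+k/2 > 0$, $\alpha+1 > 0$, and $\alpha+k/2 > 0$), so $Q(s) > 0$ for all $s \ge 0$; hence $m'(\omega) > 0$ on $(0,\infty)$ and stability follows from Theorem \ref{GSSstabilitythm}. In Case (2), the constant term of $Q$ is negative while the leading coefficient is positive, so the product of the two roots of $Q$ is negative; thus $Q$ has exactly one positive root $s_0$, at which it changes sign from negative to positive, and setting $\omega_0 = s_0/\beta$ yields the claimed sign pattern for $m'$. Stability for $\omega > \omega_0$ is then immediate from Theorem \ref{GSSstabilitythm}. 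In Case (3), both the constant and the leading coefficient of $Q$ are negative, so $Q(s) < 0$ both for $s$ near $0$ and as $s \to \infty$, giving $m'(\omega) < 0$ in the two corresponding regimes.

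The only genuine obstacle is the bookkeeping in the change-of-variables calculation leading to the closed form of $m(\omega)$ above and the subsequent collection of terms in the expansion of $m'(\omega)$; the observation that the coefficient of $s^2$ in $Q$ collapses to the clean product $(\alpha + k/2)(A + B/\beta)$ is what makes the three cases line up so neatly with the sign conditions dictated by $\alpha$ and $\alpha + k/2$.
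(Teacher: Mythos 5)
Your proof is correct and follows essentially the same route as the paper: both compute $m(\omega)$ explicitly from the scaling formula \eqref{defphi}, differentiate, and reduce the sign of $m'(\omega)$ to that of a quadratic in $\beta\omega$ whose constant term has the sign of $2/p-d/2$ and whose leading coefficient has the sign of $2/p-(d-k)/2$ (the paper writes $m(\omega)=f(\beta\omega)$ with $f(x)=x^a(1+x)^b(A+Bx)$, but the resulting quadratic is the same up to normalization). The case analysis then proceeds identically, with stability in parts (1) and (2) supplied by Theorem \ref{GSSstabilitythm}.
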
 

\begin{proof}
From Lemma \ref{groundunique}, we see that, if $k \ge 1$, then $m(\omega)=f(\beta \omega)$, where $f(x)$ is defined for $x > 0$ by
$$
f(x)=x^a (1+x)^b(A+Bx),
$$
where $a = (4-pd)/2p$, $b=(k-2)/2$, $A =  \frac12 \beta^{-a}\left|Q_{d,p}\right|_2^2$, and $B=A + \frac12 \beta^{-a}\left|\nabla_y Q_{d,p}\right|_2^2$.  We have 
 $$
f'(x)=x^{a-1}(1+x)^{b-1}(c_0 + c_1 x +c_2 x^2),
 $$
 where $c_0 = aA$, $c_1 = a(A+B) + bA + B$, and $c_2 = (a+b+1)B$.  
 
 If $0<p < 4/d$, then $c_0$, $c_1$, and $c_2$ are all positive, so $f'(x)$ is positive for all $x>0$, and therefore $m'(\omega) > 0$
  for all $\omega > 0$, since $m'(\omega)=\beta f'(\beta \omega)$.  If $4/d < p < \min(4/(d-k), p_c(d))$, then $c_0 < 0$ and $c_2 >0$, 
  so $f'(x)$ has exactly one zero $x_0$ on $(0,\infty)$,  with $f'(x)<0$ for $x < x_0$ and $f'(x)>0$ for $x > x_0$, which proves part 2.  
  Finally, if $\min(4/(d-k),p_c(d)) < p < p_c(d)$, then $c_0 < 0$ and $c_2  < 0$, so $f'(x) < 0$ at least for $x$ near 0 and for $x$ large.
\end{proof}

\bigskip

{\it Remark.} In case $\displaystyle  \min \left(\frac{4}{d-k},p_c(d)\right) < p < p_c(d)$, we do not yet know whether $m'(\omega) < 0$ for all $\omega > 0$.

\bigskip

Theorem \ref{GSSstabilitythm} gives a sharper criterion for stability than Theorem \ref{precompact}:  there exist bound-state 
solutions of \eqref{rNLS} which are orbitally stable, but which are not ground states, and whose profiles $\varphi_\omega$
 satisfy $E(\varphi_\omega) > 0$.  For example, Lemma \ref{d1k1lem} below shows that when $k=1$, $d=1$, and $p>4$, 
 and  $\omega_1$ and $\omega_2$ are given by \eqref{omega1} and \eqref{omega2}, then for all $\omega$ such
  that $\omega_1 < \omega < \omega_2$, one has that $E(\varphi_\omega)>0$ and $m'(\omega)>0$.  Therefore the bound
   state with profile $\varphi_\omega$ is not a ground state (by Lemma \ref{monotone}), but is orbitally stable 
   by Theorem \ref{GSSstabilitythm}.  Such a bound state is a local minimum for the variational problem of minimizing 
   $E$ subject to the constraint that $M$ is held constant; but is not a global minimum.  The situation for \eqref{rNLS} is
    thus similar to the situation for \eqref{BBM}, for which the existence of stable solitary waves that are not global energy
     minimizers was observed by Zeng in \cite{Z}.

 In the case $k=0$ and $4/d \le p < p_c(d)$, for which we have $m'(\omega) \le 0$ for every $\omega > 0$, the
  set  $\mathcal P_\omega$ is actually unstable, as was proved by Weinstein \cite{W1} for $p=4/d$ and Berestycki and 
  Cazenave for $p>4/d$ (see \cite{C}, section 8.2).  It is an interesting question whether the condition \eqref{VK} is 
  also necessary for stability in the general case when $k \ge 1$. 
  
The remainder of the paper is organized as follows.  Notation and some preliminary results are collected in 
Section \ref{sec:notprelim}.  Theorem \ref{precompact} is proved in Section \ref{sec:orbital stability}, Theorems 
\ref{summaryk0} and \ref{summaryk1} are proved in Section \ref{sec:existstable}, and Theorem \ref{GSSstabilitythm}
 is proved in Section \ref{sec:GSS stability}.

  \section{ Notation and preliminary results} 
 \label{sec:notprelim} 
 
 All integrals will be taken over $\mathbb R^d$, unless otherwise specified.
 
 For $1 \le k \le d$, if $\mathbf x = (x_1,\dots, x_d) \in \mathbb R^d$, we write $\mathbf x = (x,y)$ 
 where $x = (x_1, \dots, x_{d-k})$ and $y=(y_1,\dots, y_d)$, where $y_i = x_{d-k+i}$ for $i = 1, \dots d$.  If $f(\mathbf x)$ is a function defined on $\mathbb R^d$, we use the notation $f(x,y)$ in the cases $k=0$ and $k=d$ as well, with the understanding that
 $f(x,y)$ denotes $f(x)$ when $k=0$ and $f(y)$ when $k=d$.
 
For $1 \le r < \infty$, $L^r(\mathbb R^d)$ denotes the space of all measurable functions $u$ on $\mathbb R^d)$ such that   $|u|_r :=\left( \int |u|^{r}\ d \mathbf x\right)^{1/r}$ is finite.  We denote by $L^\infty(\mathbb R^d)$ the space of all measurable functions such that $|u|_{\infty} := {\rm ess\ sup}_{\mathbf x \in \mathbb R^d} |u(\mathbf x)|$ is finite.

For $s \in \mathbb R$, we define $H^s(\mathbb R^d)$ to be the $L^2$-based Sobolev space of all complex-valued functions $u$ on $\mathbb R^d$ such that $\|u\|_s < \infty$, where $\|u\|_s$ denotes the norm  on $H^s(\mathbb R^d)$ given by
$$
\|u\|_s = \left(\int   (1+|\mathbf k|^2)^{s/2} |\hat u(\mathbf k)|^2\ d\mathbf k \right)^{1/2}.
$$
Here $\hat u$ is the Fourier transform of $u$ on $\mathbb R^d$.  We usually refer to $H^s(\mathbb R^d)$ and $L^r(\mathbb R^d)$ as $H^s$ and $L^r$ for short.

For $u, v \in L^2$, we define the real inner product $(u,v)_2$ by
\begin{equation}
(u,v)_2 := \Re \int u \bar v\ d\mathbf x = \int \left(u_1v_1 + u_2 v_2\right)\ d\mathbf x,
\label{definner}
\end{equation}
where $u = u_1 + iu_2$ and $v = v_1 +i v_2$ are the decompositions of $u$ and $v$ into their real and imaginary parts.

There is a natural duality between $H^{-1}$ and $H^1$.  For $f \in H^{-1}$ and $g \in H^1$, define $\langle f, g \rangle \in \mathbb C$ by 
\begin{equation}
\langle f, g \rangle := \frac12 \int (\hat f \bar{\hat g} + \bar {\hat f} \hat g)\ d\mathbf x,
\label{defbrackets}
\end{equation}
where $\hat f$ denotes the Fourier transform of $f$.  Then for every bounded linear functional $T: H^1 \to \mathbb C$, there is a unique $f \in H^{-1}$ such that $T(g) \langle f, g\rangle$ for all $g \in H^1$.  Notice that if $f \in L^2$, then $\langle f, g\rangle = (f,g)_2$ for all $g \in H^1$. 

We denote by $H^s_{\mathbb R}$ the subspace of $H^s$ consisting of all real-valued functions in $H^s$, and by $L^2_{\mathbb R}$ the subspace of all real-valued functions in $L^2$.  Notice that for $f, g \in L^2(\mathbb R)$, one has that   $(f,g)_2 = \langle f, g \rangle \in \mathbb R$. 

 For open sets $\Omega \subseteq \mathbb R^d$ with Lipschitz boundary, and integers $n \ge 0$, we define $H^n(\Omega)$ to be the set of all complex-valued distributions on $\Omega$ whose weak derivatives up to order $n$ are in $L^2(\Omega)$, with norm 
$$
\|u\|_{H^n(\Omega)} := \sum_{|\beta| \le n} \left(\int_\Omega |\partial^\beta u|^2\ d\mathbf x\right)^{1/2}.
$$
Here the sum is taken over all multi-indices $\beta = (\beta_1, \dots, \beta_d)$ with $\beta_i \ge 0$ and $|\beta|=\beta_1 + \dots + \beta_d \le n$. 

  For $R > 0$ and $\mathbf x_0 \in \mathbb R^d$, we denote by $B_R(\mathbf x_0)$ the ball of radius $R$ in $\mathbb R^d$ centered at $\mathbf x_0$.   
  
The letter $C$ will be used throughout for various constants which can vary in value from line to line, but whose meaning will be clear from the context.

  \bigskip
  
 We state here for easy reference a couple of versions of the Gagliardo-Nirenberg-Sobolev inequality.  For $d \ge 1$, we define \begin{equation}
 p_c(d)= \begin{cases} \frac{4}{d-2} \quad \text{for $d \ge 3$}\\   \infty \quad \ \ \text{for $d=1$ and $d=2$}.
 \end{cases}
 \label{defpc}
 \end{equation} 
 
 \begin{thm} Suppose $d \ge 1$ and $0 < p < p_c(d)$.  Define
 \begin{equation}
 C_{d,p}=\frac{2(p+2)}{4+p(2-d)} \left(\frac{4+p(2-d)}{pd}\right)^{pd/4}|Q_{d,p}|_2^{-1},
 \label{bestC}
 \end{equation}
 where $Q_{d,p}$ is as defined in Lemma \ref{Rdpdef}. Then for all $u \in H^1$,
 \begin{equation}
 |u|_{p+2} \le C_{d,p} |u|_2^{1 - (pd/(2p+4))} |\nabla u|_2^{pd/(2p+4)},
 \label{GNineq}
 \end{equation}
 and equality is attained when $u=Q_{d,p}$.  Moreover, for any bounded open connected set $\Omega \subset \mathbb R^d$ with smooth boundary, there exists $C>0$, depending only on $p$, $d$, and $\Omega$, such that for all $u \in H^1(\Omega)$,
 $$
 |u|_{L^{p+2}(\Omega)} \le C |u|_{L^2(\Omega)}^{1 - (pd/(2p+4))} \|u\|_{H^1(\Omega)}^{pd/(2p+4)}. 
 $$
 
 \label{GN} 
 \end{thm}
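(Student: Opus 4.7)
The plan is to establish the global inequality by the classical sharp Gagliardo--Nirenberg argument of Weinstein, and then deduce the local inequality from it via an extension operator.

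For the global inequality, I would begin by considering the Weinstein functional
\[
J(u) = \frac{|\nabla u|_2^{pd/2}\, |u|_2^{p+2 - pd/2}}{|u|_{p+2}^{p+2}}
\]
on $H^1 \setminus \{0\}$, and show it is bounded below and attains a positive infimum $J_{\min}$. The inequality \eqref{GNineq} is equivalent to the assertion $J(u) \ge J_{\min}$ for a constant $J_{\min}$ which one identifies in terms of $C_{d,p}$. To show the infimum is attained, I would take a minimizing sequence $\{u_n\}$, apply Schwarz symmetrization (which preserves $|u|_2$ and $|u|_{p+2}$, and can only decrease $|\nabla u|_2$) to replace each $u_n$ by its radially decreasing rearrangement, and then use the scaling invariance $u(\mathbf x) \mapsto \lambda u(\mu \mathbf x)$ of $J$ to normalize $|u_n|_2 = |\nabla u_n|_2 = 1$. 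Compactness of the embedding of radial $H^1$ functions into $L^{p+2}$ (valid for $p+2 < p_c(d)+2$) then yields a strong $L^{p+2}$ limit $u^*$, which is weakly lower semicontinuous in the $H^1$ norms, so $u^*$ is a minimizer.

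Next, I would write the Euler--Lagrange equation for $u^*$; after a further rescaling one obtains that $u^*$ satisfies $-\Delta Q + Q = |Q|^p Q$, so by Lemma \ref{Rdpdef} it must be a translate of $Q_{d,p}$. To evaluate $J(Q_{d,p})$, I would derive the two Pohozaev identities (multiply \eqref{phieqn1} by $Q_{d,p}$ and integrate; then multiply by $\mathbf x \cdot \nabla Q_{d,p}$ and integrate by parts), obtaining
\[
|\nabla Q_{d,p}|_2^2 = \frac{pd}{2(p+2)}\, |Q_{d,p}|_{p+2}^{p+2}, \qquad |Q_{d,p}|_2^2 = \frac{4+p(2-d)}{2(p+2)}\, |Q_{d,p}|_{p+2}^{p+2}.
\]
Substituting these into the formula for $J(Q_{d,p})$ and simplifying produces exactly the constant $C_{d,p}$ in \eqref{bestC}, and the attainment statement is automatic from the construction.

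For the local inequality, I would use a bounded extension operator $E : H^1(\Omega) \to H^1(\mathbb R^d)$ with $\|Eu\|_{H^1(\mathbb R^d)} \le C_\Omega \|u\|_{H^1(\Omega)}$, available for Lipschitz domains. Applying \eqref{GNineq} to $Eu$, bounding $|\nabla(Eu)|_2 \le \|Eu\|_{H^1(\mathbb R^d)}$ and $|Eu|_2 \le \|Eu\|_{H^1(\mathbb R^d)}$, and then using $|u|_{L^{p+2}(\Omega)} \le |Eu|_{p+2}$ together with $|u|_{L^2(\Omega)} \le |Eu|_2$ yields the desired bound. The main technical obstacle is the existence-of-minimizer step: the rearrangement argument requires some care to verify that the symmetric decreasing rearrangement does not increase $|\nabla u|_2$ (the Polya--Szeg\H{o} inequality) and that the resulting radial minimizing sequence has a subsequence converging in $L^{p+2}$; once these standard facts are in hand the rest is straightforward algebra from the Pohozaev relations.
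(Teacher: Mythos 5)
The paper itself does not prove this theorem --- it cites Friedman for the inequality with some constant and Weinstein (via Killip--Visan) for the sharp constant --- so your global argument is essentially a reconstruction of Weinstein's proof from the cited reference: minimize the functional $J$, symmetrize and rescale the minimizing sequence, pass to an $L^{p+2}$-convergent limit, identify the minimizer with $Q_{d,p}$ through the Euler--Lagrange equation and Lemma \ref{Rdpdef}, and evaluate $J(Q_{d,p})$ via the two Pohozaev identities, which you state correctly. Two caveats there. First, for $d=1$ mere radial symmetry gives no compactness; you need the symmetric-decreasing structure of the rearranged sequence (which you do have) to obtain the decay $|u(\mathbf x)|\le C|\mathbf x|^{-d/2}|u|_2$ and hence strong $L^{p+2}$ convergence. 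Second, substituting your Pohozaev identities into $J(Q_{d,p})$ yields $J(Q_{d,p})^{-1}=\frac{2(p+2)}{4+p(2-d)}\bigl(\frac{4+p(2-d)}{pd}\bigr)^{pd/4}|Q_{d,p}|_2^{-p}$, which is the sharp constant for the inequality raised to the power $p+2$ (check it against the classical case $d=2$, $p=2$, where it gives $2|Q|_2^{-2}$); it does not literally coincide with the display \eqref{bestC}, whose normalization and exponent on $|Q_{d,p}|_2$ appear to be a typo in the paper. That mismatch is not your error, but you should not assert that the computation ``produces exactly'' \eqref{bestC} without reconciling the two forms.

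The genuine gap is in your derivation of the local inequality. As written, your chain gives only $|u|_{L^{p+2}(\Omega)}\le |Eu|_{p+2}\le C\,|Eu|_2^{1-\theta}\,|\nabla Eu|_2^{\theta}\le C\,\|Eu\|_{H^1(\mathbb R^d)}\le C\,\|u\|_{H^1(\Omega)}$ with $\theta=pd/(2p+4)$, i.e.\ the plain Sobolev embedding, not the interpolated bound claimed in the theorem. The step ``$|u|_{L^2(\Omega)}\le |Eu|_2$'' points in the wrong direction: to retain the factor $|u|_{L^2(\Omega)}^{1-\theta}$ on the right-hand side you need an \emph{upper} bound $|Eu|_{L^2(\mathbb R^d)}\le C\,|u|_{L^2(\Omega)}$, not a lower one. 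The repair is standard: use an extension operator (reflection for smooth domains, or Stein's) that is simultaneously bounded from $L^2(\Omega)$ to $L^2(\mathbb R^d)$ and from $H^1(\Omega)$ to $H^1(\mathbb R^d)$; then $|Eu|_2\le C|u|_{L^2(\Omega)}$ and $|\nabla Eu|_2\le \|Eu\|_{H^1(\mathbb R^d)}\le C\|u\|_{H^1(\Omega)}$, and applying the global inequality to $Eu$ gives exactly the stated local bound.
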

 
 \begin{proof}   See Theorem 9.3 of \cite{F} for a proof that \eqref{GNineq} holds for some constant $C_{d,p}$, and \cite{KV} for the value of the best constant $C_{d,p}$ given in \eqref{bestC}.  The original determination of the best constant is in Weinstein \cite{W1}, where the reader may also find references to the original papers of Gagliardo and Nirenberg. 
 \end{proof}
 
 \begin{thm}  Suppose $d \ge 2$, $0 < p < p_c(d)$, and $1 \le k \le d-1$.  Then there exists $C > 0$, depending only on $d$, $p$, and $k$, such that for all $u \in H^1$,
   \begin{equation}
  |u|_{p+2} \le C |u|_2^{\mu_0} \prod_{j=1}^{d-k}|u_{x_j}|_2^\mu \prod_{j=1}^k |u_{y_j}|_2^\mu,
  \label{aniso}
  \end{equation} 
   where
  \begin{equation}
  \begin{aligned}
  \mu_0 &= 1 - \frac{pd}{2p+4},\\
  \mu &= \frac{p}{2p+4}.
  \end{aligned}
  \label{mus}
  \end{equation}  
 \label{AGN} 
 \end{thm}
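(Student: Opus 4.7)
After relabeling $(x_1,\ldots,x_{d-k},y_1,\ldots,y_k)$ as $(x_1,\ldots,x_d)$, the stated inequality \eqref{aniso} becomes the classical anisotropic Gagliardo--Nirenberg inequality
\[
|u|_{p+2} \le C|u|_2^{\mu_0}\prod_{j=1}^d |\partial_{x_j} u|_2^\mu,
\]
independent of $k$, with $\mu_0$ and $\mu$ forced by homogeneity in $u$ together with the dilation $u(\mathbf x)\mapsto u(\lambda \mathbf x)$. Because AM--GM gives $\prod_j|\partial_{x_j} u|_2 \le d^{-d/2}|\nabla u|_2^d$, the right-hand side is no larger than the isotropic expression in \eqref{GNineq}, so \eqref{aniso} is strictly sharper than Theorem \ref{GN} and cannot be derived from it alone.

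The plan is an anchor-and-interpolate argument. For the anchor step, apply the Gagliardo (Loomis--Whitney) inequality
\[
|v|_{d/(d-1)} \le \prod_{j=1}^d \Bigl(\int |\partial_{x_j} v|\,d\mathbf x\Bigr)^{1/d}, \qquad v\in W^{1,1}(\mathbb R^d),
\]
to $v=|u|^s$, and use Cauchy--Schwarz $\int |u|^{s-1}|\partial_{x_j} u|\,d\mathbf x \le |u|_{2(s-1)}^{s-1}|\partial_{x_j} u|_2$ to obtain
\[
|u|_{sd/(d-1)}^s \le C s\,|u|_{2(s-1)}^{s-1}\prod_{j=1}^d|\partial_{x_j} u|_2^{1/d}.
\]
When $d\ge 3$, the choice $s=2(d-1)/(d-2)$ makes $sd/(d-1)=2(s-1)=2d/(d-2)$, and the estimate closes to the critical anisotropic Sobolev bound $|u|_{2d/(d-2)}\le C\prod_j |\partial_{x_j} u|_2^{1/d}$. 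H\"older interpolation $|u|_{p+2}\le |u|_2^{1-\lambda}|u|_{2d/(d-2)}^{\lambda}$ with $\lambda=pd/(2(p+2))$ then yields \eqref{aniso}, since $1-\lambda=\mu_0$ and $\lambda/d=\mu$ by direct computation.

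For $d=2$ the critical exponent is infinite and the above anchor is unavailable, so I would apply the Gagliardo--Cauchy--Schwarz bound directly: taking $s=(p+2)/2$ gives, for $p\ge 2$,
\[
|u|_{p+2}^{(p+2)/2}\le C|u|_p^{p/2}|\partial_{x_1} u|_2^{1/2}|\partial_{x_2} u|_2^{1/2},
\]
and substituting H\"older's $|u|_p\le |u|_2^{1-\gamma}|u|_{p+2}^{\gamma}$ followed by absorbing the surviving power of $|u|_{p+2}$ to the left produces \eqref{aniso}. For $0<p<2$ one instead takes $s=2$ to get $|u|_4\le C|u|_2^{1/2}|\partial_{x_1} u|_2^{1/4}|\partial_{x_2} u|_2^{1/4}$ and interpolates between $L^2$ and $L^4$. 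The main obstacle is the two-dimensional case: without a finite critical Sobolev exponent the argument becomes self-improving, and the subcritical range $0<p<2$ requires the intermediate $L^4$ step. Scaling fixes the exponents throughout, however, so once set up the remaining algebra reduces to routine bookkeeping.
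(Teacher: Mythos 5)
Your argument is correct, but it is worth noting that the paper does not prove Theorem \ref{AGN} at all: it simply cites \cite{BIN} and \cite{E}. What you have written is essentially the standard self-contained derivation of the anisotropic Gagliardo--Nirenberg inequality, and it checks out. Your opening observation is the right one -- after relabeling, the claimed inequality is symmetric in all $d$ coordinates (the same exponent $\mu$ on every $|\partial_{x_j}u|_2$), so $k$ plays no role, and the AM--GM comparison correctly shows the estimate is genuinely stronger than the isotropic \eqref{GNineq} and cannot be deduced from Theorem \ref{GN}. The anchor via the Gagliardo--Loomis--Whitney inequality applied to $|u|^s$, the closing choice $s=2(d-1)/(d-2)$ for $d\ge 3$ yielding $|u|_{2d/(d-2)}\le C\prod_j|\partial_{x_j}u|_2^{1/d}$, and the H\"older interpolation with $\lambda=pd/(2p+4)$ all verify ($1-\lambda=\mu_0$ and $\lambda/d=\mu$ as claimed, and $p+2\le 2d/(d-2)$ precisely because $p<p_c(d)$). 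The $d=2$ splitting into $p\ge 2$ (with the absorption of $|u|_{p+2}^{\gamma p/2}$, where $\gamma p/2<(p+2)/2$) and $0<p<2$ (via the $L^4$ anchor) also closes with the right exponents. Two routine points you should make explicit in a final write-up: the chain rule $|\partial_{x_j}|u|^s|\le s|u|^{s-1}|\partial_{x_j}u|$ for complex-valued $u\in H^1$ requires either a density argument starting from $C_c^\infty$ or an appeal to the Lieb--Loss diamagnetic-type identity the paper already uses in the proof of Lemma \ref{mineqphi}; and every absorption step (dividing by $|u|_{2d/(d-2)}^{s-1}$ for $d\ge3$, or by $|u|_{p+2}^{\gamma p/2}$ for $d=2$) needs the a priori finiteness of that norm, which follows from the Sobolev embedding or from first working with compactly supported smooth functions. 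What your approach buys is a complete, elementary proof in place of a citation; what it gives up is the full generality of \cite{BIN} and \cite{E}, which allow distinct exponents on the different partial derivatives -- but that generality is not needed for \eqref{aniso} as stated.
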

 
 \begin{proof} See \cite{BIN} and \cite{E}. 
 \end{proof}
 
 We conclude this section with a proof of Lemma \ref{mineqphi}.
 
 \bigskip
 
 {\it Proof of Lemma \ref{mineqphi}.}   Given $v \in G_m$, define $f \in H^1$ by $f = |\Re v| + i |\Im v|$. Then $|f|=|v|$, and by Theorem 6.17   of \cite{LL}, we have $|\nabla f| = |\nabla v|$ and $|\nabla_y f| = |\nabla_y v|$.  Therefore $f \in G_m$ also, and so $f$ satisfies the Euler-Lagrange equation 
 \begin{equation} 
E'(f) + \omega M'(f) = (-\Delta f - |f|^p f) + \omega (f - \beta\Delta_y f) = 0
\label{EL2}
 \end{equation}
 for some constant $\omega \in \mathbb C$.   
 
 Multiplying \eqref{EL2} by the complex conjugate $\bar f$ of $f$, and integrating over $\mathbb R^n$, we obtain
 $$
 \int (|\nabla f|^2 - |f|^{p+2})\ d\mathbf x = -\omega \int (|f|^2 + \beta |\nabla_y f|^2)\ d\mathbf x,
 $$
 and since 
 $$
 0 > I_m = \int \left( \frac12 |\nabla f|^2 -\frac{1}{p+2} \int |f|^{p+2}\right)\ d\mathbf x \ge \frac12 \int \left(|\nabla f|^2 - |f|^{p+2}\right)\ d\mathbf x,
 $$
 it follows that $\omega$ is real and $\omega > 0$.
 
 Define $g \in H^1$ by
 $$
 f(\mathbf x) = f(x,y) = g(x,(1+\beta \omega)^{-1/2}y).
 $$
 (If $k=0$ we just define $g=f$.)  Then
 \begin{equation}
 -\Delta g + \omega g = |g|^p g
 \label{geq}
 \end{equation}
 on $\mathbb R^d$.  By virtue of satisfying \eqref{geq}, we know from Theorem 8.1.1 of \cite{C} that $g$ is continuous on $\mathbb R^d$, and that $\displaystyle \lim_{|\mathbf x| \to \infty} g(\mathbf x)=0$.
 
 We now apply Lemma 8.1.12 of \cite{C} to the functions  $h_1 = \Re g$ and $h_2 = \Im g$,  which are related to $v$ by 
 $$
 \begin{aligned}
 |\Re v(x,y)| &=  h_1(x,(1+\beta \omega)^{-1/2}y)\\
 |\Im v(x,y)| &=  h_2(x,(1+\beta \omega)^{-1/2}y),
 \end{aligned}
 $$
and which satisfy the equations
  $$
 \begin{aligned}
 -\Delta h_1 + \omega h_1 &= |g|^p h_1\\
  -\Delta h_2 + \omega h_2 &= |g|^p h_2.
 \end{aligned}
 $$ 
 Note that  the function $a = |g|^p$ satisfies $\displaystyle \lim_{|\mathbf x| \to \infty} a(\mathbf x) = 0$, and multiplying \eqref{geq} by $\bar g$ and integrating over $\mathbb R^d$ gives
 $$
 \int (|\nabla g|^2 - a |g|^2)\ d \mathbf x = - \omega \int |g|^2\ d\mathbf x < 0.
 $$ 
 Therefore the hypotheses of Lemma 8.1.12 of \cite{C} are satisfied, and we can conclude that there exists a positive solution $u \in H^1$ of the equation
 \begin{equation}
 -\Delta u + \omega u = a u,
 \label{ueq}
 \end{equation}
 and that  $h_1 = c u$ and $h_2 = d u$ for some nonnegative constants $c$ and $d$. 
 
Since $v \in I_m$, then there exists some $\tilde \omega \in \mathbb C$ for which
\begin{equation}
E'(v) + \tilde \omega M'(v) = 0;
\label{veq}
\end{equation}
and if we define $w \in H^1$ by
 $$
 v(\mathbf x) =v(x,y) = w(x,(1+\beta \tilde \omega)^{-1/2}y),
$$
then we have that 
\begin{equation}
-\Delta w +\tilde \omega w = |w|^p w
\label{weq1}
\end{equation}
on $\mathbb R^d$.  Again using Theorem 8.1.1 of \cite{C}, we have that $w$ is continuous on $\mathbb R^d$, so $q_1=\Re w$ and $q_2=\Im w$ are continuous on $\mathbb R^d$ as well.  But $|q_1| = h_1$ and $|q_2| = h_2$, and $h_1$ and $h_2$ are both either everywhere positive on $\mathbb R^d$, or everywhere zero on $\mathbb R^d$, so it follows that $q_1$ and $q_2$ cannot change sign on $\mathbb R^d$.  We thus have that $q_1 = \tilde c u$ and $q_2 = \tilde d u$ for some constants $\tilde c, \tilde d \in \mathbb R$ with $|\tilde c| = c$ and $|\tilde d|=d$.   From \eqref{ueq} it then follows that $-\Delta q_1 + \omega q_1 = au$ and $-\Delta q_2 + \omega q_2 = aq_2$, so (recalling that $|g|=|w|$) we have
\begin{equation}
-\Delta w + \omega w = a w = |w|^p w.
\label{weq}
\end{equation} 
In particular, comparing \eqref{weq1} and \eqref{weq} we see that $\tilde \omega = \omega$.   

We have now shown that
$$
v(x,y) = (\tilde c + i \tilde d) u(x, (1+\beta \omega)^{-1/2}y),
$$
where $u$ is positive everywhere on $\mathbb R^d$.  Choosing $\theta \in \mathbb R$ such that $e^{i\theta} = \frac{\tilde c+i\tilde d}{|\tilde c+i\tilde d|}$, we have $v=e^{i\theta}\psi$ where $\psi$ is positive.  From \eqref{veq} it follows that $\psi$ is a solution of \eqref{phieqn}, and hence Lemma \ref{groundunique} implies that
$$  
\psi(\mathbf x) = \varphi_\omega(\mathbf x + \mathbf x_0)
 $$ 
 for some $\mathbf x_0 \in \mathbb R^d$.
 \qed

 \section{Proof of Theorem \ref{precompact}} 
 \label{sec:orbital stability}

The proof of Theorem \ref{precompact} proceeds by applying the method of concentration compactness to minimizing sequences, along the lines of \cite{L} (see also \cite{Z}).   
 
 We recall the basic concentration compactness lemma from \cite{L}.  A proof of the lemma in the form given here may be found in \cite{A}. 
 
\begin{lem} Suppose $m>0$, and let $\{\rho_n\}$ be a sequence in $L^1(\mathbb R^d)$ satisfying, for all $n \in \mathbb N$:
$$
\rho_n \ge 0 \text{ on $\mathbb R^d$ and $\int \rho_n\ d \mathbf x = m$.}
$$  
Then there exists a subsequence $\{\rho_{n_j}\}$ with one of the three
following properties:

\begin{enumerate}

\item (compactness) There exists a sequence $\{\mathbf x_j\}$ such that for every $\epsilon > 0$ there
exists $R < \infty$ satisfying for all $j \in \mathbb N$: 	
\begin{equation}
\int_{B_R(\mathbf x_j)} \rho_{n_j}\ d\mathbf x \ge m - \epsilon;
\label{compactness}
\end{equation}
 \item (vanishing)  For all $R > 0$,
\begin{equation}
\lim_{j \to \infty} \sup_{\mathbf x_0 \in \mathbb R^d} \int_{B_R(\mathbf x_0)} \rho_{n_j}(\mathbf x)\ d\mathbf x = 0;
\label{vanishing}
\end{equation}
or

\item (dichotomy) For all $R>0$, we have that 
$$
s(R)=\lim_{j \to \infty} \sup_{\mathbf x_0 \in \mathbb R^d} \int_{B_R(\mathbf x_0)} \rho_{n_j}(\mathbf x)\ d \mathbf x
$$
exists, and for some $\alpha \in (0,m)$ we have
\begin{equation} 
\lim_{R \to \infty} s(R) = \alpha. 
\label{dichotlimit}
\end{equation}
\end{enumerate}
\label{conccomp}
\end{lem}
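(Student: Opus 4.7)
The plan is to introduce the Lévy concentration function. For each $n$, define $Q_n : [0,\infty) \to [0,m]$ by
\[
Q_n(R) = \sup_{\mathbf x_0 \in \mathbb R^d} \int_{B_R(\mathbf x_0)} \rho_n\, d\mathbf x.
\]
Each $Q_n$ is nondecreasing, bounded above by $m$, and satisfies $\lim_{R\to\infty} Q_n(R) = m$ since $\int \rho_n\, d\mathbf x = m$. By Helly's selection principle (applied to uniformly bounded monotone functions), I can extract a subsequence $\{Q_{n_j}\}$ converging pointwise (after passing to a right-continuous modification) to a nondecreasing limit $Q : [0,\infty) \to [0,m]$. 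Set $\alpha := \lim_{R \to \infty} Q(R) \in [0,m]$.

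I then split the argument into the three cases $\alpha = 0$, $0 < \alpha < m$, and $\alpha = m$. The vanishing case is immediate: $Q \equiv 0$ forces $Q_{n_j}(R) \to 0$ for every $R$, which is \eqref{vanishing}. The dichotomy case is also straightforward: take $s(R) := Q(R)$, which by construction is the limit of $\sup_{\mathbf x_0} \int_{B_R(\mathbf x_0)} \rho_{n_j}\, d\mathbf x$ and satisfies $\lim_{R \to \infty} s(R) = \alpha \in (0,m)$, giving \eqref{dichotlimit}.

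The substantive case is compactness ($\alpha = m$), where one must produce a \emph{single} sequence $\{\mathbf x_j\}$ that captures nearly all the mass for every tolerance $\epsilon$. My plan is an overlap argument. First, fix $\epsilon_0 = m/3$ and choose $R_0$ so that $Q(R_0) > 2m/3$; for $j$ large, pick $\mathbf x_j$ with $\int_{B_{R_0}(\mathbf x_j)} \rho_{n_j}\, d\mathbf x > 2m/3$. Now given any $\epsilon \in (0, m/3)$, choose $R_\epsilon$ with $Q(R_\epsilon) > m - \epsilon$, and for $j$ sufficiently large pick $\mathbf x_j'$ with $\int_{B_{R_\epsilon}(\mathbf x_j')} \rho_{n_j}\, d\mathbf x > m - \epsilon$. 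Since both balls contain strictly more than $m/2$ of the total mass $m$, they cannot be disjoint, so $|\mathbf x_j - \mathbf x_j'| \le R_0 + R_\epsilon$ and hence $B_{R_\epsilon}(\mathbf x_j') \subseteq B_{R_0 + 2R_\epsilon}(\mathbf x_j)$. Setting $R(\epsilon) := R_0 + 2R_\epsilon$ then yields \eqref{compactness} for all large $j$; a finite adjustment of the initial indices (enlarging $R(\epsilon)$ as needed) handles the remaining $j$.

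The main obstacle is precisely this last step: the need to exhibit the translates $\mathbf x_j$ independently of $\epsilon$. A naive extraction at each scale produces a different sequence of translates for each $\epsilon$, which is weaker than what the lemma asserts. The overlap argument above — pinning $\mathbf x_j$ with a modest mass guarantee and then noting that any other near-optimal ball must meet it — is the mechanism that allows a single sequence of centers to serve for every $\epsilon$. Everything else in the proof follows from monotonicity and Helly's theorem.
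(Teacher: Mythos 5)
Your proof is correct, and it is essentially the standard argument: the paper itself does not prove this lemma but defers to the reference \cite{A} (and ultimately to Lions \cite{L}), where exactly this L\'evy concentration-function-plus-Helly approach is used, including the overlap argument with two balls each carrying more than half the mass to pin down a single sequence of centers in the compactness case. The only cosmetic quibble is that you do not need a ``right-continuous modification'' in the Helly step --- the selection theorem for uniformly bounded nondecreasing functions already yields pointwise convergence everywhere after a diagonal extraction --- but this does not affect the argument.
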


 \begin{lem}  Let $d \ge 1$,   $0 \le k \le d$, $0 < p < p_c(d)$, $\beta > 0$,   and $m>0$.   If $I_m > -\infty$ and $\{u_n\}$ is a minimizing sequence for $I_m$, then there exists $C>0$ such that
 $\|u_n\|_1\le C$ for all $n \in \mathbb N$. 
 \label{H1bound}
 \end{lem}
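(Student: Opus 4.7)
The hypothesis $M(u_n) = m$ yields immediately $|u_n|_2 \le \sqrt{2m}$ and, if $k \ge 1$, also $|\nabla_y u_n|_2 \le \sqrt{2m/\beta}$, since both terms in the definition of $M$ are nonnegative. When $k = d$ this already controls the full $H^1$-norm, so I may assume $0 \le k \le d-1$; the task is then to bound the $L^2$-norms of the derivatives of $u_n$ in the $x$-variables.

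I plan to combine the upper bound $E(u_n) \le I_m + 1$ with an appropriate Gagliardo--Nirenberg inequality for $|u_n|_{p+2}^{p+2}$. For $k = 0$, Theorem \ref{GN} gives $|u_n|_{p+2}^{p+2} \le C|\nabla u_n|_2^{pd/2}$ directly, after absorbing the bounded factor $|u_n|_2^{p+2-pd/2}$ into the constant. For $1 \le k \le d-1$, I apply the anisotropic inequality of Theorem \ref{AGN}, use the $M$-bounds to absorb the factors $|u_n|_2^{\mu_0(p+2)}$ and $\prod_{j=1}^{k}|u_{n,y_j}|_2^{p/2}$ into the constant, and then estimate $\prod_{j=1}^{d-k}|u_{n,x_j}|_2^{p/2} \le C|\nabla_x u_n|_2^{(d-k)p/2}$ by AM--GM. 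Writing $\mathcal G_n := |\nabla u_n|_2$ if $k = 0$ and $\mathcal G_n := |\nabla_x u_n|_2$ otherwise, both cases reduce to an inequality
\begin{equation*}
\tfrac{1}{2}\mathcal G_n^2 \le I_m + 1 + C\,\mathcal G_n^{(d-k)p/2},
\end{equation*}
with $C$ depending only on $d$, $k$, $p$, $\beta$, and $m$. In the subcritical range $(d-k)p/2 < 2$, the quadratic left-hand side dominates as $\mathcal G_n \to \infty$, so $\mathcal G_n$ is uniformly bounded, which together with the $M$-bounds gives the desired $H^1$ estimate.

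The case $(d-k)p/2 > 2$ I rule out entirely by a scaling argument: fix any $u_0 \in H^1$ with $M(u_0) = m$ and $|u_0|_{p+2} > 0$, and set $u_\lambda(x,y) := \lambda^{(d-k)/2} u_0(\lambda x, y)$ (replaced by $u_\lambda(\mathbf x) := \lambda^{d/2} u_0(\lambda \mathbf x)$ if $k = 0$). A direct change of variables shows $M(u_\lambda) = m$ for every $\lambda > 0$, while $(d-k)p/2 > 2$ forces $E(u_\lambda) \to -\infty$ as $\lambda \to \infty$, contradicting the hypothesis $I_m > -\infty$.

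The remaining critical case $(d-k)p/2 = 2$ is the main obstacle. Here the displayed inequality reads $(\tfrac12 - C)\mathcal G_n^2 \le I_m + 1$, and the argument succeeds only if $C < 1/2$. The plan is to invoke the sharp constant of Theorem \ref{GN} (and its anisotropic counterpart) so that $C$ is explicit in terms of $m$, $\beta$, and $|Q_{d,p}|_2$; the same scaling computation then shows that $I_m > -\infty$ forces the mass to lie strictly below the associated critical threshold, which is precisely the condition making $\tfrac12 - C$ strictly positive and yielding the uniform bound on $\mathcal G_n$.
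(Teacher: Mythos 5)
Your handling of the strictly subcritical range $(d-k)p<4$ and your elimination of the supercritical range $(d-k)p>4$ by scaling are both correct, and in fact your route is sounder than the paper's own argument. The paper simply combines $\int|u_n|^2\le 2m$ with $|u_n|_{p+2}\le C\|u_n\|_1$ to arrive at $\|u_n\|_1^2\le 2m+2B+\frac{2C^{p+2}}{p+2}\|u_n\|_1^{p+2}$ and then asserts boundedness; as written this is a non sequitur, since an inequality of the form $x^2\le A+Bx^{q}$ with $q>2$ does not confine $x$ to a bounded set. Your idea of feeding the mass constraint into the (anisotropic) Gagliardo--Nirenberg inequality so that the nonlinear term is controlled by a power $(d-k)p/2$ of the relevant gradient norm alone, and then comparing that power with $2$, is exactly the repair the paper's one-line estimate needs, and it closes the argument whenever $(d-k)p<4$.

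The genuine gap is the critical case $(d-k)p=4$, and the strategy you propose for it cannot work: it is false that $I_m>-\infty$ forces the mass strictly below the critical threshold. Take $k=0$, $p=4/d$, and $m=m_0=M(Q_{d,p})=\frac12|Q_{d,p}|_2^2$. By Lemma \ref{critpower} one has $I_{m_0}=0>-\infty$, so the hypotheses of the lemma hold with the mass sitting exactly at the threshold, where your coefficient $\tfrac12-C$ degenerates to $0$. Worse, the conclusion itself fails there: since $E(Q_{d,p})=0$ when $p=4/d$, the functions $u_n(\mathbf x)=n^{d/2}Q_{d,p}(n\mathbf x)$ satisfy $M(u_n)=m_0$ and $E(u_n)=n^2E(Q_{d,p})=0=I_{m_0}$ for every $n$, so they form a minimizing sequence with $\|u_n\|_1\to\infty$. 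Thus no argument can complete your final paragraph --- the lemma as stated is simply false at the critical exponent and threshold mass. (This does not propagate into the rest of the paper, because the lemma is only ever invoked under the stronger hypothesis $-\infty<I_m<0$, which for $k=0$, $p=4/d$ never holds; but it does mean that either the hypothesis $(d-k)p<4$, or the hypothesis $I_m<0$ together with an argument excluding the threshold, must be added before your proof, or the paper's, is complete.)
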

  
\begin{proof} Since $\{u_n\}$ is a minimizing sequence for $I_m > -\infty$, then $\{E(u_n)\}$ is bounded.  Let $B$ be such that $|E(u_n)| \le B$ for all $n \in \mathbb N$. By Theorem \ref{GN}, for $p$ satisfying the stated assumptions, there exists $C>0$ such that $|u|_{p+2}  \le C\|u\|_1$ for all $u \in H^1$.  Therefore
$$
\begin{aligned}
\|u_n\|_1^2 & \le 2m + 2E(u_n) + \frac{2}{p+2}|u_n|_{p+2}^{p+2} \\
& \le 2m + 2B +\frac{2C^{p+2}}{p+2} \|u_n\|_1^{p+2},
\end{aligned}
$$
from which it follows that $\|u_n\|_1$ is bounded.
\end{proof}
  
 \begin{lem} Let $d \ge 1$,   $0 \le k \le d$, $p > 0$, and $\beta > 0$. If $0 < m \le \tilde m$, then $ 0 \ge I_m \ge I_{\tilde m}$. (Note that we include the cases here when $I_m$ or $I_{\tilde m}$ is $-\infty$.)
 \label{monotone}
 \end{lem}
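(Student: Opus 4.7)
The plan is to prove both inequalities by constructing explicit test functions based on a ``spreading out'' scaling.

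For the bound $I_m \le 0$, fix any nonzero $\phi \in C_c^\infty(\mathbb R^d)$ and for $L > 0$ set $\phi_L(\mathbf x) = c_L \phi(\mathbf x/L)$, where $c_L > 0$ is chosen so that $M(\phi_L) = m$. A change of variables gives
$$
M(\phi_L) = \frac{c_L^2 L^d}{2}|\phi|_2^2 + \frac{\beta c_L^2 L^{d-2}}{2}|\nabla_y \phi|_2^2,
$$
with the second term absent when $k=0$, so that $c_L^2 \sim 2m/(L^d|\phi|_2^2)$ as $L \to \infty$. The same scaling identities yield $|\nabla \phi_L|_2^2 = O(L^{-2})$ and $|\phi_L|_{p+2}^{p+2} = O(L^{-dp/2})$; since $p > 0$, both terms of $E(\phi_L) = \tfrac12 |\nabla\phi_L|_2^2 - \tfrac{1}{p+2}|\phi_L|_{p+2}^{p+2}$ tend to zero, so $I_m \le \lim_{L\to\infty} E(\phi_L) = 0$.

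For the monotonicity $I_{\tilde m} \le I_m$, given $u \in H^1$ with $M(u) = m$ and any $\epsilon > 0$, the plan is to construct $\tilde u \in H^1$ with $M(\tilde u) = \tilde m$ and $E(\tilde u) \le E(u) + \epsilon$, after which taking the infimum over $u$ yields $I_{\tilde m} \le I_m$. First truncate $u$ against a smooth cutoff $\chi_R$ supported in the ball $B_R$, producing $u_R = \chi_R u$ of compact support; dominated convergence gives $M(u_R) \to m$ and $E(u_R) \to E(u)$ as $R \to \infty$. Next use the construction above to build a spread-out bump $\phi_L$, supported in a ball of radius $L$, with $M(\phi_L) = \tilde m - M(u_R)$ (a quantity that tends to $\tilde m - m > 0$) and $E(\phi_L) \to 0$. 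Translating by a vector $\mathbf x_L$ with $|\mathbf x_L| > R + L$ makes the supports of $u_R$ and $\phi_L(\cdot - \mathbf x_L)$ disjoint, so that the integrands defining $M$ and $E$ split pointwise as sums; hence $\tilde u := u_R + \phi_L(\cdot - \mathbf x_L)$ satisfies $M(\tilde u) = M(u_R) + M(\phi_L) = \tilde m$ and $E(\tilde u) = E(u_R) + E(\phi_L) \le E(u) + \epsilon$ for $R$ and $L$ sufficiently large. The case $I_m = -\infty$ is handled identically by applying the construction to a sequence $u_n$ with $M(u_n) = m$ and $E(u_n) \to -\infty$, producing $\tilde u_n$ with $E(\tilde u_n) \to -\infty$ and hence $I_{\tilde m} = -\infty$.

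The main technical subtlety will be the bookkeeping of the normalizing constant $c_L$ in the case $k \ge 1$, since the $\beta|\nabla_y u|^2$ term in $M$ breaks the pure power-law scaling under $\phi \mapsto c\phi(\cdot/L)$. This is not a serious obstacle, however, because that term contributes only an $O(L^{-2})$ correction to $M(\phi_L)$ relative to the leading $L^d$ term, so the asymptotic behavior of $c_L$, and consequently of $|\nabla\phi_L|_2$ and $|\phi_L|_{p+2}$, is the same as in the $k = 0$ case.
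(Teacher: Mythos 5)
Your proof is correct, and its second half takes a genuinely different route from the paper's. For the bound $I_m\le 0$ you use the same spreading scaling $\phi_L=c_L\phi(\cdot/L)$ as the paper, except that you normalize $c_L$ so that $M(\phi_L)=m$ exactly, whereas the paper only arranges $M(u_n)\to m$; your version is the cleaner one. For the monotonicity, the paper simply multiplies a minimizing sequence for $I_m$ by the constant $\theta=\sqrt{\tilde m/m}\ge 1$: since the nonlinear term of $E$ scales with a higher power of $\theta$ than the gradient term, one gets $E(\theta u)\le \theta^2 E(u)$, hence $I_{\tilde m}\le \theta^2 I_m\le I_m$, where the last step uses the already-established sign $I_m\le 0$. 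You instead truncate a near-minimizer to compact support and graft on a distant, spread-out bump carrying the missing mass $\tilde m - M(u_R)$ at negligible energy cost. Both arguments are sound. The paper's is shorter and needs no truncation bookkeeping, but it leans on the exact power-law behavior of $E$ and $M$ under $u\mapsto\theta u$ and on the sign of $I_m$; your ``add mass at infinity'' construction is the mechanism usually invoked for subadditivity estimates of the type $I_{m_1+m_2}\le I_{m_1}+I_{m_2}$ (compare Lemma \ref{subadditive}, which the paper again proves by a multiplicative trick), and it is more robust to perturbations of the functionals. The price is the extra analysis you correctly flag: one must check $M(u_R)\to M(u)$ and $E(u_R)\to E(u)$ (routine, since $\|u\nabla\chi_R\|_{L^2}=O(1/R)$ and the $L^{p+2}$ convergence follows by dominated convergence when $E(u)$ is finite, the case $E(u)=-\infty$ being handled by your sequence argument), and one should note that the bump's target mass $\tilde m - M(u_R)$ is eventually positive when $\tilde m>m$, the case $\tilde m=m$ being trivial.
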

 
 \begin{proof}  First we show that $I_m \le 0$ for all $m > 0$.  To see this, for given $m > 0$ choose $u$ such that $\frac12\int |u|^2\ d\mathbf x = m$, and for $n \in \mathbb N$ define $u_n(\mathbf x) = n^{-d/2} u(\mathbf x/n)$.  For $k=0$ we have $M(u_n)=m$ for all $n$, and for $k > 0$ we have 
$$
M(u_n) = \frac12 \int|u|^2\ d\mathbf x + n^{-2} \left(\frac{\beta}{2}\int |\nabla_y u|^2 \ d\mathbf x\right),
$$
so $\displaystyle \lim_{n \to \infty}M(u_n)=m$.  Also,
$$
E(u_n) = n^{-2}\left( \frac12 \int |\nabla u|^2\ d\mathbf x \right) 
- n^{-dp}\left(\frac{1}{p+2}\int |u|^{p+2}\ d\mathbf x \right),
$$
so $\displaystyle \lim_{n \to \infty} E(u_n) = 0$.  This proves that $I_m \le 0$.

Now for given $m$ and $\tilde m$ such that $0 < m \le \tilde m$, let $\{u_n\}$ be a minimizing sequence for $I_m$, and define $\tilde u_n = \beta u$ for $n \in \mathbb N$,
 where $\displaystyle \beta = \sqrt{(\tilde m/m)} \ge 1$. Then for all $n$ we have $M(\tilde u_n) = \tilde m$  and
     \begin{equation}
 \begin{aligned}
 I_{\tilde m} \le
 E(\tilde u_n) &= \beta \left(\frac12 \int |\nabla u_n|^2\ d\mathbf x \right) - \beta^{(p/2)+1}\left(\frac{1}{p+2}\int |u_n|^{p+2}\ d\mathbf x  \right)\\
 & = \beta E(u_n) - (\beta^{(p/2)+1}-\beta) \left(\frac{1}{p+2}\int |u_n|^{p+2}\ d\mathbf x  \right)\\
 &\le \beta E(u_n).
 \end{aligned} 
 \label{Itildem}
 \end{equation}
 Taking the limit in \eqref{Itildem} as $n \to \infty$, we get that $I_{\tilde m} \le \beta I_m$.  But since $I_m \le 0$ and $\beta \ge 1$, it follows that $I_{\tilde m} \le I_m$.  
 \end{proof}
 
  \begin{lem} Let $d \ge 1$,   $0 \le k \le d$, $p>0$, $\beta > 0$,   and $m>0$.
 If $I_m <0$, then for every minimizing sequence $\{u_n\}$ for $I_m$, there exists $\delta > 0$ such that 
 \begin{equation}
 \int |u_n|^{p+2}\ d\mathbf x \ge \delta
 \label{intgtdelta}
 \end{equation}
 for all sufficiently large $n$.
 \label{deltalem} 
 \end{lem}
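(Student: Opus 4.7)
The plan is to argue by contradiction, using the defining formula for $E$ and the fact that $\tfrac12|\nabla u|^2 \ge 0$. Specifically, I would suppose the conclusion fails. Then there is a subsequence of $\{u_n\}$ (which, relabeling, I will continue to call $\{u_n\}$) along which $\int |u_n|^{p+2}\,d\mathbf x \to 0$.

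Next, I would apply the lower bound
\[
E(u_n) \;=\; \frac12 \int |\nabla u_n|^2\,d\mathbf x \;-\; \frac{1}{p+2}\int |u_n|^{p+2}\,d\mathbf x \;\ge\; -\frac{1}{p+2}\int |u_n|^{p+2}\,d\mathbf x,
\]
which is valid for all $n$. Sending $n \to \infty$ along the subsequence and using the minimizing property $E(u_n) \to I_m$, I would conclude $I_m \ge 0$. This contradicts the hypothesis $I_m < 0$, so no such subsequence can exist. Equivalently, there exists $\delta > 0$ such that $\int |u_n|^{p+2}\,d\mathbf x \ge \delta$ for all sufficiently large $n$.

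There is no real obstacle here: the argument is a one-line consequence of the fact that the kinetic part of $E$ is nonnegative, so that a vanishing potential part forces the energy to be asymptotically nonnegative. The only care needed is to phrase the contradiction via a subsequence rather than the full sequence, which then yields the desired uniform positive lower bound on $|u_n|_{p+2}^{p+2}$ for all $n$ sufficiently large.
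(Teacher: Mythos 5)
Your argument is correct and is essentially identical to the paper's: both negate the conclusion to extract a subsequence along which $\int |u_n|^{p+2}\,d\mathbf x \to 0$, then use the nonnegativity of the kinetic term in $E$ together with $E(u_n)\to I_m$ to force $I_m \ge 0$, contradicting $I_m<0$. No gaps.
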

 
 \begin{proof}
 Suppose there exists a minimizing sequence $\{u_n\}$ for which such a number $\delta > 0$ does not exist.  Then
  $\displaystyle \liminf_{n \to \infty} \int |u_n|^{p+2} = 0$, and so
 $$
 I_m = \lim_{n \to \infty} E(u_n) \ge \liminf_{n \to \infty}\frac12\int |\nabla u|^2\ d\mathbf{x} \ge 0,
 $$ 
 contradicting our assumption about $I_m$.
 \end{proof}

 The following lemma is a simple example of what is called an ``inverse Sobolev inequality'' (cf.\ Lemma 4.9 of \cite{KV} for a more sophisticated example).   Although the lemma is well-known, we are not aware of a convenient reference for it, so we include a proof for the reader's convenience. 
 
 \begin{lem} Let $d \ge 1$ and $0 < p < p_c(d)$.  For every $B > 0$ and every $\delta > 0$ there exists $\eta >0$ such that if $f \in H^1(\mathbb R^d)$ with $\|f\|_1 \le B$ and $\int_{\mathbb R^d} |f|^{p+2}\ d\mathbf x > \delta$, then 
 \begin{equation}
 \sup_{\mathbf x_0 \in \mathbb R^d} \int_{B_1(\mathbf x_0)} |f|^{p+2}\ d\mathbf x \ge \eta.
 \label{supeta}
 \end{equation}  
 \label{supetalem}
 \end{lem}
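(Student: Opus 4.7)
The plan is to control $\int_{\mathbb R^d} |f|^{p+2}$ by a product of the local sup $\eta := \sup_{\mathbf x_0} \int_{B_1(\mathbf x_0)} |f|^{p+2}\,d\mathbf x$ and the global $H^1$-norm, so that if $\eta$ is too small while the $H^1$-norm stays bounded, the integral cannot exceed $\delta$. The basic mechanism is an interpolation between the pointwise-in-$\mathbf x_0$ bound $\int_{B_1(\mathbf x_0)} |f|^{p+2} \le \eta$ and the local Sobolev embedding $|f|_{L^{p+2}(B_1(\mathbf x_0))}^{p+2} \le C\|f\|_{H^1(B_1(\mathbf x_0))}^{p+2}$ (valid because $p<p_c(d)$), combined with a bounded-overlap covering argument.

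First I would fix a countable cover $\{B_1(\mathbf x_j)\}_{j\in\mathbb N}$ of $\mathbb R^d$ by unit balls whose multiplicity is bounded by a constant $N=N(d)$, e.g.\ by taking centers on the lattice $\mathbb Z^d$. By the covering property and monotonicity,
\begin{equation*}
\int_{\mathbb R^d} |f|^{p+2}\,d\mathbf x \le \sum_{j} \int_{B_1(\mathbf x_j)} |f|^{p+2}\,d\mathbf x.
\end{equation*}
Next, on each ball $B_1(\mathbf x_j)$ I would combine the trivial bound $\int_{B_1(\mathbf x_j)} |f|^{p+2}\,d\mathbf x\le\eta$ with the local Sobolev inequality by writing, for $\lambda\in[0,1]$ to be chosen,
\begin{equation*}
\int_{B_1(\mathbf x_j)} |f|^{p+2}\,d\mathbf x \le \eta^{\lambda}\left(\int_{B_1(\mathbf x_j)}|f|^{p+2}\,d\mathbf x\right)^{1-\lambda} \le \eta^{\lambda}\, C^{1-\lambda}\,\|f\|_{H^1(B_1(\mathbf x_j))}^{(p+2)(1-\lambda)}.
\end{equation*}
The key choice is $\lambda = p/(p+2)$, for which $(p+2)(1-\lambda)=2$, so the remaining power of the $H^1$-norm is exactly $2$ and the local $H^1$-norms will sum properly.

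Summing the resulting estimate over $j$ and invoking the bounded-overlap property, I would obtain
\begin{equation*}
\int_{\mathbb R^d}|f|^{p+2}\,d\mathbf x \le C\,\eta^{p/(p+2)} \sum_{j}\|f\|_{H^1(B_1(\mathbf x_j))}^{2} \le C N\,\eta^{p/(p+2)}\|f\|_{H^1(\mathbb R^d)}^{2} \le C N B^2\,\eta^{p/(p+2)}.
\end{equation*}
If the left-hand side exceeds $\delta$, the lemma follows with $\eta \ge \bigl(\delta/(CNB^2)\bigr)^{(p+2)/p}$. The only step needing a little care is the Sobolev embedding on the unit ball with a constant independent of $\mathbf x_j$, which is immediate by translation invariance; beyond that the argument is a direct interpolation and does not require anything sharper than the local Gagliardo–Nirenberg–Sobolev inequality already quoted in Theorem~\ref{GN}.
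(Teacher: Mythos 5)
Your argument is correct and is essentially the paper's proof in different bookkeeping: the paper partitions $\mathbb R^d$ into congruent cubes, uses the same uniform local Sobolev embedding, and extracts one good cube by pigeonhole, whereas you sum the interpolated local bounds over a bounded-overlap cover; both routes yield the same quantitative conclusion $\eta \sim \bigl(\delta/(CB^2)\bigr)^{(p+2)/p}$. One small correction to your illustrative cover: unit balls centered on $\mathbb Z^d$ fail to cover $\mathbb R^d$ once $d \ge 4$ (the point $(\tfrac12,\dots,\tfrac12)$ lies at distance $\sqrt d/2 \ge 1$ from every lattice point), so take centers on a finer lattice such as $d^{-1/2}\mathbb Z^d$, which still has multiplicity bounded by a dimensional constant.
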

 
 \begin{proof} 
 Let $\{Q_n\}_{n=1}^\infty$ be a sequence of cubes of side length $2/\sqrt{d}$ in $\mathbb R^d$ whose union is all of $\mathbb R^d$ and whose interiors are disjoint.  For $f \in H^1(\mathbb R^d)$, we have
 $$
 \sum_{n=1}^\infty \int_{Q_n} \left(|f|^2 + |\nabla f|^2\right)\ d\mathbf x = \|f\|_1^2 \le B^2 = \sum_{n=1}^\infty \frac{B^2}{|f|_{p+2}^{p+2}}\int_{Q_n} |f|^{p+2}\ d\mathbf x,
 $$
 where $|f|_{p+2}$ denotes $\|f\|_{L^{p+2}(\mathbb R^d)}$.  It follows that there exists $n_0 \in \mathbb N$ such that
 $$
  \int_{Q_{n_0}} \left(|f|^2 + |\nabla f|^2\right)\ d\mathbf x \le  \frac{B^2}{|f|_{p+2}^{p+2}} \int_{Q_{n_0}} |f|^{p+2}\ d\mathbf x.
 $$
  On the other hand, by our assumption on $p$ and Theorem \ref{GN}, there exists $C>0$ such that
 $$
\left(\int_{Q_n} |f|^{p+2}\ d\mathbf x\right)^{1/(p+2)} \le C \left(\int_{Q_n} \left(|f|^2 + |\nabla f|^2\right)\ d\mathbf x     \right)^{1/2}
 $$
 for all $f \in H^1(\mathbb R^d)$ and all $n \in \mathbb N$.  Therefore
  $$
\left(\int_{Q_{n_0}} |f|^{p+2}\ d\mathbf x\right)^{1/(p+2)} \le \frac{CB}{\delta^{1/2}}\left(\int_{Q_{n_0}} |f|^{p+2}\ d\mathbf x\right)^{1/2},
 $$
 from which it follows that
 $$
 \int_{Q_{n_0}} |f|^{p+2}\ d\mathbf x \ge \eta,
 $$
 where $\eta = (\frac{\delta^{1/2}}{CB})^{2+(4/p)}$.
 This implies \eqref{supeta}, since $Q_{n_0}$ is contained in a ball of radius 1. 
 \end{proof}

 \begin{lem} Let $d \ge 1$,   $0 \le k \le d$, $p>0$, $\beta > 0$,   and $m>0$.
 If $-\infty < I_m < 0$, then
 $$
 I_{\theta m} < \theta I_m
 $$
 for all $\theta > 1$.
 \label{sublinear}
 \end{lem}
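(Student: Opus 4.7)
The plan is to mimic the scaling argument from Lemma \ref{monotone} but extract strict inequality from the lower bound on $\int |u_n|^{p+2}$ given by Lemma \ref{deltalem}. Take a minimizing sequence $\{u_n\}$ for $I_m$, and for $\theta > 1$ define $v_n = \sqrt{\theta}\, u_n$. Since the functional $M$ is quadratic in $u$ (including its $\nabla_y$ term when $k \ge 1$), we have $M(v_n) = \theta M(u_n) = \theta m$, so each $v_n$ is admissible for the variational problem defining $I_{\theta m}$.

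Next I would compute, exactly as in \eqref{Itildem} but with $\beta$ replaced by $\sqrt{\theta}$,
\begin{equation*}
E(v_n) = \theta E(u_n) - \bigl(\theta^{(p+2)/2} - \theta\bigr) \cdot \frac{1}{p+2} \int |u_n|^{p+2}\, d\mathbf x.
\end{equation*}
Since $\theta > 1$ and $p > 0$, the factor $\theta^{(p+2)/2} - \theta$ is strictly positive. By Lemma \ref{deltalem}, the hypothesis $I_m < 0$ guarantees a $\delta > 0$ such that $\int |u_n|^{p+2}\, d\mathbf x \ge \delta$ for all sufficiently large $n$. Therefore
\begin{equation*}
I_{\theta m} \le E(v_n) \le \theta E(u_n) - \frac{(\theta^{(p+2)/2} - \theta)\, \delta}{p+2}
\end{equation*}
for all such $n$. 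Passing to the limit $n \to \infty$ gives $I_{\theta m} \le \theta I_m - \frac{(\theta^{(p+2)/2} - \theta)\,\delta}{p+2} < \theta I_m$, as desired.

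The only subtlety is verifying the scaling computation carefully in the case $k \ge 1$: one must check that the quadratic form $M$ scales by the clean factor $\theta$ under $v_n = \sqrt{\theta}\, u_n$ (which it does, because both the $|u|^2$ piece and the $|\nabla_y u|^2$ piece are quadratic), and that the $|\nabla u|^2$ term in $E$ also picks up only the factor $\theta$ while the $|u|^{p+2}$ term picks up $\theta^{(p+2)/2}$. Since the hypothesis $I_m > -\infty$ ensures $I_m$ is a genuine real number (so that $\theta I_m$ is well-defined) and $I_m < 0$ is what invokes Lemma \ref{deltalem}, both hypotheses of the lemma are used in an essential way, and no further obstacle arises.
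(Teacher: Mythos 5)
Your proposal is correct and follows essentially the same argument as the paper: scale a minimizing sequence by $\sqrt{\theta}$, use Lemma \ref{deltalem} to get a uniform lower bound $\delta$ on $\int |u_n|^{p+2}$, and pass to the limit to obtain the strict gap (noting that $\theta^{(p+2)/2}=\theta^{(p/2)+1}$, so your exponent matches the paper's). No issues.
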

 
 \begin{proof}
  Suppose $\theta >1$ and $-\infty < I_m < 0$, and let $\{u_n\}$ be a minimizing sequence for $I_m$.   Then for each $n \in \mathbb N$ we have $M(\sqrt \theta u_n) = \theta M(u_n) = \theta m$.  By Lemma \ref{deltalem}, there exists $\delta > 0$ such that \eqref{intgtdelta} holds for all sufficiently large $n$.  Therefore, for such $n$ we have
 \begin{equation}
 \begin{aligned}
 I_{\theta m} \le
 E(\sqrt \theta u_n) &= \theta \left(\frac12 \int |\nabla u_n|^2\ d\mathbf x \right) - \theta^{(p/2)+1}\left(\frac{1}{p+2}\int |u_n|^{p+2}\ d\mathbf x  \right)\\
 & = \theta E(u_n) - (\theta^{(p/2)+1}-\theta) \left(\frac{1}{p+2}\int |u_n|^{p+2}\ d\mathbf x  \right)\\
 &\le \theta E(u_n) - (\theta^{(p/2)+1}-\theta) \delta.
 \end{aligned}
 \label{Etheta} 
 \end{equation}
 Taking $n \to \infty$ in \eqref{Etheta}, we get
 $$
 I_{\theta m} \le \theta I_m  - (\theta^{(p/2)+1}-\theta) \delta < \theta I_m.
 $$ 
 \end{proof}
 
 \begin{lem} Let $d \ge 1$,   $0 \le k \le d$, $0 < p < p_c(d)$, $\beta > 0$,   and $m>0$.
 If $-\infty < I_m <0$, $0 < m_1 \le m_2$, and $m=m_1 + m_2$, then 
 $$I_m < I_{m_1}+I_{m_2}.$$ 
 \label{subadditive}
 \end{lem}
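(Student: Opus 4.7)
The plan is to exploit the strict sub-homogeneity from Lemma~\ref{sublinear}, namely $I_{\theta m_0} < \theta\, I_{m_0}$ whenever $I_{m_0} < 0$ and $\theta > 1$. Equivalently, $m \mapsto I_m/m$ is strictly decreasing on $\{m : I_m < 0\}$. Combined with the fact from Lemma~\ref{monotone} that $I_m$ is non-positive and non-increasing in $m$, this should reduce the proof to a short case analysis, with no further compactness or scaling arguments needed beyond what Lemma~\ref{sublinear} already provides.

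Assume without loss of generality that $m_1 \le m_2$, and note that $m_1 > 0$ forces $m/m_2 = 1 + m_1/m_2 > 1$. The intermediate inequality I plan to establish is
\begin{equation*}
\frac{m_1}{m_2}\, I_{m_2} \le I_{m_1},
\end{equation*}
or equivalently $I_{m_2}/m_2 \le I_{m_1}/m_1$. When $I_{m_1} = 0$ this is immediate since $I_{m_2} \le 0$ by Lemma~\ref{monotone}. When $I_{m_1} < 0$, Lemma~\ref{monotone} also gives $I_{m_2} \le I_{m_1} < 0$; the intermediate inequality is then trivial if $m_1 = m_2$, and otherwise follows by applying Lemma~\ref{sublinear} with base point $m_1$ and factor $m_2/m_1 > 1$.

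With the intermediate inequality in hand, split on the sign of $I_{m_2}$. If $I_{m_2} < 0$, apply Lemma~\ref{sublinear} with base point $m_2$ and factor $m/m_2 > 1$ to get
\begin{equation*}
I_m < \frac{m}{m_2}\, I_{m_2} = I_{m_2} + \frac{m_1}{m_2}\, I_{m_2} \le I_{m_2} + I_{m_1},
\end{equation*}
where the strict first inequality is Lemma~\ref{sublinear} and the final inequality is the intermediate claim. If instead $I_{m_2} = 0$, monotonicity forces $I_{m_1} = 0$ as well, so $I_{m_1} + I_{m_2} = 0 > I_m$ by the standing hypothesis $I_m < 0$.

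There is no serious obstacle; the main care required is bookkeeping the equality cases ($m_1 = m_2$ and the possible vanishing of $I_{m_1}$ or $I_{m_2}$) so that each invocation of Lemma~\ref{sublinear} is made at a base point where $I$ is strictly negative and with the scaling factor strictly greater than one.
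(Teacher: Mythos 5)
Your proof is correct and follows essentially the same route as the paper's: both arguments apply Lemma \ref{sublinear} at base point $m_2$ with factor $m/m_2>1$ and then compare $I_{m_2}/m_2$ with $I_{m_1}/m_1$ via a second application of Lemma \ref{sublinear} (with Lemma \ref{monotone} handling the degenerate cases $I_{m_2}=0$ and $m_1=m_2$). Your version is slightly more explicit about the equality cases, which the paper's single chain of inequalities glosses over, but the content is identical.
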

 
 \begin{proof}
 We may assume that $I_{m_2}< 0$; otherwise $I_{m_1}=I_{m_2}=0$ by Lemma \ref{monotone}, and the statement of the Lemma is obvious.    Also $I_{m_2} > - \infty$; otherwise by Lemma \ref{monotone} we have $I_m = -\infty$, contrary to assumption.   Let $\zeta = m_2/m_1$.  Then using Lemma \ref{sublinear}, we obtain
 $$
 \begin{aligned}
 I_m = I_{m_2(1 + (1/\zeta))} &< (1 + (1/\zeta))I_{m_2} = I_{m_2} +(1/\zeta)I_{\zeta m_1} \\
 &\le I_{m_2} + (1/\zeta)\zeta I_{m_1}= I_{m_2}+I_{m_1}.
 \end{aligned}
 $$
 \end{proof}
 
 In the following lemmas we assume that $\{u_n\}$ is a minimizing sequence for $I_m$, and apply the concentration compactness lemma, Lemma \ref{conccomp}, to the sequence $\left\{\rho_n\right\}$ defined for $n \in \mathbb N$ by 
\begin{equation}
\rho_n =\frac12\left(|u_n|^2 +\beta |\nabla_y u_n|^2\right).
\label{defrho}
\end{equation}
  
 \begin{lem} Let $d \ge 1$,   $0 \le k \le d$, $0 < p < p_c(d)$, $\beta > 0$,   and $m>0$. Suppose that $-\infty < I_m < 0$. If $\{u_n\}$ is a minimizing sequence for $I_m$, then the ``vanishing'' alternative of Lemma \ref{conccomp} does not hold for the sequence $\{\rho_n\}$ defined by \eqref{defrho}.
 \label{novanish}
 \end{lem}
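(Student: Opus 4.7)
The plan is to derive a contradiction from the assumption that vanishing holds. The key idea is that vanishing for $\rho_n$ forces the local $L^2$ norms of $u_n$ over unit balls to tend to zero uniformly in the base point, and, since the $u_n$ are uniformly bounded in $H^1$, a local Gagliardo-Nirenberg inequality then forces the local $L^{p+2}$ norms to vanish uniformly as well. This will contradict the lower bound on local $L^{p+2}$ mass supplied by the inverse Sobolev inequality.

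More concretely, I would proceed as follows. First, by Lemma \ref{H1bound} there is a constant $B$ with $\|u_n\|_1 \le B$ for all $n$. Since $I_m < 0$, Lemma \ref{deltalem} yields $\delta > 0$ such that $\int |u_n|^{p+2}\, d\mathbf x \ge \delta$ for all sufficiently large $n$. Applying Lemma \ref{supetalem} with these values of $B$ and $\delta$, I obtain $\eta > 0$ such that
\begin{equation}
\sup_{\mathbf x_0 \in \mathbb R^d} \int_{B_1(\mathbf x_0)} |u_n|^{p+2}\, d\mathbf x \ge \eta
\label{supgeqeta}
\end{equation}
for all sufficiently large $n$.

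Next, I assume toward contradiction that the vanishing alternative holds for $\{\rho_n\}$ with $R=1$. Since $\rho_n \ge \tfrac12 |u_n|^2$ pointwise, this forces
$$
\lim_{n \to \infty} \sup_{\mathbf x_0 \in \mathbb R^d} \int_{B_1(\mathbf x_0)} |u_n|^2\, d\mathbf x = 0.
$$
By the local Gagliardo-Nirenberg inequality on $B_1(\mathbf x_0)$ from the second part of Theorem \ref{GN}, with a constant $C$ independent of $\mathbf x_0$ by translation invariance, I have
$$
\int_{B_1(\mathbf x_0)} |u_n|^{p+2}\, d\mathbf x \le C \left(\int_{B_1(\mathbf x_0)} |u_n|^2\, d\mathbf x\right)^{(2+p-pd/2)/2} \|u_n\|_{H^1(B_1(\mathbf x_0))}^{pd/2}.
$$
The assumption $0 < p < p_c(d)$ guarantees that the exponent $2+p-pd/2$ is strictly positive (in all dimensions), and the $H^1$ factor is controlled by $\|u_n\|_1 \le B$. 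Taking the supremum over $\mathbf x_0$ and letting $n \to \infty$ therefore yields
$$
\lim_{n \to \infty} \sup_{\mathbf x_0 \in \mathbb R^d} \int_{B_1(\mathbf x_0)} |u_n|^{p+2}\, d\mathbf x = 0,
$$
which contradicts \eqref{supgeqeta}. Hence vanishing cannot occur.

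The only delicate point is verifying that the localized Gagliardo-Nirenberg constant can be taken independent of the base point $\mathbf x_0$; this is immediate because the estimate on $B_1(\mathbf x_0)$ is obtained from the one on $B_1(0)$ by translation, and because the $H^1$-norm upper bound in the right-hand side can be replaced by the global bound $\|u_n\|_1$. Everything else is a direct invocation of the lemmas already proved.
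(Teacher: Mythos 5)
Your proof is correct and follows essentially the same route as the paper's: boundedness in $H^1$ (Lemma \ref{H1bound}), the uniform lower bound on $\int|u_n|^{p+2}\,d\mathbf x$ (Lemma \ref{deltalem}), and the inverse Sobolev inequality (Lemma \ref{supetalem}) to obtain a uniform lower bound on the local $L^{p+2}$ mass over unit balls. The only difference is that you make explicit, via the localized Gagliardo--Nirenberg inequality with a translation-invariant constant, the final implication from non-vanishing of the local $L^{p+2}$ mass to non-vanishing of $\int_{B_1(\mathbf x_0)}\rho_n\,d\mathbf x$ --- a step the paper's proof leaves implicit --- and your exponent bookkeeping there is correct.
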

 
 \begin{proof} By Lemma \ref{H1bound}, $\{u_n\}$ is bounded in $H^1$, and by Lemma \ref{deltalem}, if vanishing holds then \eqref{intgtdelta} is true for all sufficiently large $n$. So by Lemma \ref{supetalem}, there exists $\eta > 0$ such that 
 $$
 \sup_{\mathbf x_0 \in \mathbb R^d} \int_{B_1(\mathbf x_0)} |u_n|^{p+2}\ d\mathbf x \ge \eta
 $$
 holds for all sufficiently large $n$.  But this shows that \eqref{vanishing} is not true for $R=1$. 
 \end{proof}
  
 \begin{lem} Let $d \ge 1$,   $0 \le k \le d$, $0 < p < p_c(d)$, $\beta > 0$,   and $m>0$.   Suppose that $-\infty < I_m < 0$. If $\{u_n\}$ is a minimizing sequence for $I_m$, then the ``dichotomy'' alternative of Lemma \ref{conccomp} does not hold for the sequence $\{\rho_n\}$ defined by \eqref{defrho}.
 \label{nodichotomy}
 \end{lem}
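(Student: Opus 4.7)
The plan is to argue by contradiction, using the standard concentration compactness cutoff scheme adapted to the anisotropic mass functional $M$. Assume the dichotomy alternative holds with $\alpha \in (0,m)$ as in \eqref{dichotlimit}. I would then split $u_n$ into two pieces whose $M$-values converge to $\alpha$ and $m-\alpha$ respectively, show that $E$ is asymptotically additive on this splitting, and then contradict the strict subadditivity established in Lemma \ref{subadditive}.

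More precisely, passing to a subsequence (still denoted $\{u_n\}$), the dichotomy alternative together with \eqref{dichotlimit} and Lemma \ref{H1bound} produces centers $\mathbf x_n \in \mathbb R^d$ and radii $R_n \to \infty$, $R_n' \to \infty$ with $R_n' > 2R_n$, such that
$$
\int_{B_{R_n}(\mathbf x_n)} \rho_n\, d\mathbf x \to \alpha,\qquad
\int_{B_{R_n'}(\mathbf x_n)\setminus B_{R_n}(\mathbf x_n)} \rho_n\, d\mathbf x \to 0,\qquad
\int_{\mathbb R^d \setminus B_{R_n'}(\mathbf x_n)} \rho_n\, d\mathbf x \to m-\alpha.
$$
Pick smooth cutoffs $\xi_n,\chi_n$ with $\xi_n\equiv 1$ on $B_{R_n}(\mathbf x_n)$, $\mathrm{supp}\,\xi_n \subset B_{2R_n}(\mathbf x_n)$, and $\chi_n\equiv 1$ outside $B_{R_n'}(\mathbf x_n)$, $\chi_n\equiv 0$ on $B_{R_n'/2}(\mathbf x_n)$, such that $|\nabla \xi_n| \le C/R_n$ and $|\nabla \chi_n| \le C/R_n'$. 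Set $v_n = \xi_n u_n$, $w_n = \chi_n u_n$. The $H^1$-bound of Lemma \ref{H1bound} together with $|\nabla\xi_n|,|\nabla\chi_n|\to 0$ and the vanishing of $\rho_n$ on the transition annulus gives $M(v_n)\to\alpha$ and $M(w_n)\to m-\alpha$.

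For the energies, expanding $\nabla v_n = \xi_n \nabla u_n + u_n \nabla \xi_n$ (and similarly for $w_n$) and using that $\xi_n,\chi_n$ have disjoint supports with $\xi_n^2+\chi_n^2 \le 1$, all cross terms in $|\nabla v_n|^2 + |\nabla w_n|^2$ carry a factor of $|\nabla \xi_n|$ or $|\nabla \chi_n|$ and are $o(1)$ by the $H^1$-bound. The disjoint supports give $|v_n|^{p+2}+|w_n|^{p+2} \le |u_n|^{p+2}$ pointwise, and the deficit in the $L^{p+2}$ term is controlled by $\int_{B_{R_n'}(\mathbf x_n)\setminus B_{R_n}(\mathbf x_n)} |u_n|^{p+2}\,d\mathbf x$, which tends to $0$ by the local Gagliardo-Nirenberg inequality of Theorem \ref{GN} applied on the annulus, since the $L^2$ mass there vanishes while the local $H^1$ norm stays bounded. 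Combining these gives $E(u_n) \ge E(v_n)+E(w_n) + o(1)$.

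To close, rescale $\tilde v_n = \lambda_n v_n$ with $\lambda_n = \sqrt{\alpha/M(v_n)} \to 1$ so that $M(\tilde v_n)=\alpha$, and similarly $\tilde w_n$ with $M(\tilde w_n)=m-\alpha$; because $\lambda_n\to 1$ and the defining integrals are uniformly bounded, $E(\tilde v_n) - E(v_n) \to 0$ and likewise for $w_n$. Therefore $\liminf E(v_n) \ge I_\alpha$ and $\liminf E(w_n) \ge I_{m-\alpha}$, which yields $I_m \ge I_\alpha + I_{m-\alpha}$, contradicting Lemma \ref{subadditive}. The main obstacle is the asymptotic additivity $E(u_n)\ge E(v_n)+E(w_n)+o(1)$: specifically, showing that both the gradient cross terms and the $L^{p+2}$ contribution on the transition annulus tend to zero. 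This is handled by choosing $R_n \ll R_n'$, exploiting the $O(1/R_n)$ decay of $|\nabla\xi_n|$, and invoking the bounded-domain Gagliardo-Nirenberg inequality on the annulus together with the vanishing of $\rho_n$ there.
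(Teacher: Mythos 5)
Your argument is correct and follows essentially the same route as the paper: cut $u_n$ into two pieces along a transition annulus where $\rho_n$ is asymptotically negligible, verify that $M$ and $E$ split asymptotically (gradient cross terms decaying with the gradient of the cutoff, and the $L^{p+2}$ deficit on the annulus controlled by Gagliardo--Nirenberg together with the smallness of the local $L^2$ mass), renormalize so the constraints hold exactly, and contradict the strict subadditivity of Lemma \ref{subadditive}. The only presentational differences are that the paper uses a partition of unity $\eta^2+\psi^2\equiv 1$ at scales $R$ and $2R$ with explicit $\epsilon$--$R$--$N$ bookkeeping and a diagonal extraction (which is also how one would justify, from the form of dichotomy stated in Lemma \ref{conccomp}, the two-scale version you assert at the outset), whereas you use disjointly supported cutoffs and settle for the one-sided estimate $E(u_n)\ge E(v_n)+E(w_n)+o(1)$, which is all that is needed.
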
 
 
 \begin{proof}    
 First, we claim that dichotomy implies the existence of a subsequence $\{u_{n_j}\}$  and two sequences $\{g_j\}$ and $\{h_j\}$ in $H^1$ such that 
 \begin{equation}
 \begin{aligned}
 &\lim_{j \to \infty} M(g_j)  = \alpha,\\
 &\lim_{j \to \infty} M(h_j) = m-\alpha, \quad \text{and}\\
 &\lim_{j \to \infty}\left( E(u_{n_j}) - \left[ E(g_j)+E(h_j) \right] \right) = 0.
 \end{aligned}
 \label{MElim} 
 \end{equation}
 
  To prove this claim, let $\epsilon > 0$ be given. If dichotomy holds, then by \eqref{dichotlimit}, for each real number $R$ greater than some $R_0$,  there is a corresponding $N_0 = N_0(R)$ such that for all $n \ge N_0$, we can find $\mathbf x_n \in \mathbb R^d$ satisfying
 $$ 
\alpha-\epsilon < \frac12 \int_{B_R(\mathbf x_n)} \left(|u_n|^2 +\beta|\nabla_y u_n|^2\right)\ d\mathbf x < \alpha + \epsilon. 
 $$ 
 
Let $\psi$ and $\eta$ be smooth nonnegative functions on $\mathbb R$ such that $\eta(x) = 1$ for $|x| \le 1$, $\eta(x)=0$ for  $|x| \ge 2$, and $\eta^2 + \psi^2 \equiv 1$ on $\mathbb R$.  For each $R > R_0$ and $n \ge N_0(R)$, define
$$
\begin{aligned}
g_{R,n}(\mathbf x) & :=\eta\left(\frac{|\mathbf x - \mathbf x_n|}{R}    \right) u_n(\mathbf x)\\
h_{R,n}(\mathbf x) & :=\psi\left(\frac{|\mathbf x - \mathbf x_n|}{R}    \right) u_n(\mathbf x).
\end{aligned}
$$

Clearly,
$$
M(g_{R,n}) \ge \alpha - \epsilon.
$$
Also, from the chain rule and product rule, we have that  
$$
|\nabla_y g_{R,n}(\mathbf x)| \le |\nabla_y u_n(\mathbf x)|  \eta \left(\frac{|\mathbf x - \mathbf x_n|}{R}\right)+ \frac{1}{R}|u_n(\mathbf x)| \left| \eta' \left(\frac{|\mathbf x - \mathbf x_n|}{R}\right)\right|
$$
for all $\mathbf x \in \mathbb R^d$, and hence 
$$
\begin{aligned}
M(g_{R,n}) &\le  \frac12 \int_{B_{2R}(\mathbf x_n)}\left[|u_n|^2 +\beta |\nabla_y u_n|^2\right] d\mathbf x + \frac{|\eta'|_\infty}{2R} \int \left[|u_n|^2+\beta|\nabla_y u_n|^2\right] d\mathbf x\\
&\ \qquad +\frac{\beta |\eta'|_{\infty}^2}{2R^2}\int |u_n|^2\ d\mathbf x\\
& \le \alpha + \epsilon +\frac{C}{R}+ \frac{C}{R^2},
\end{aligned}
$$
where $C = M(u_n) ( |\eta'|_{\infty}+\beta |\eta'|_\infty^2)  =m(|\eta'|_{\infty}+\beta |\eta'|_\infty^2)$ depends only on $\eta$ and $m$. 
Similarly, we have the estimates
$$
M(h_{R,n}) \ge \frac12 \int_{\mathbb R^d \backslash B_{2R}(\mathbf x_n)} \left[|u_n|^2 +\beta |\nabla_y u_n|^2\right] d\mathbf x \ge m - \alpha - \epsilon
$$
and
$$
\begin{aligned}
M(h_{R,n}) &\le  \frac12 \int_{\mathbb R^d \backslash B_{R}(\mathbf x_n)}\left[|u_n|^2 +\beta |\nabla_y u_n|^2\right] d\mathbf x + \frac{|\psi'|_\infty}{2R} \int \left[|u_n|^2+\beta |\nabla_y u_n|^2\right] d\mathbf x\\
&\ \qquad +\frac{\beta |\psi'|_{\infty}^2}{2R^2}\int |u_n|^2\ d\mathbf x\\
& \le m - \alpha + \epsilon +\frac{C}{R}+ \frac{C}{R^2},
\end{aligned}
$$
where $C$ depends only on $\psi$ and $m$.  Therefore (assuming without loss of generality that $R_0 \ge 1$) we have that there exists a constant $D$, depending only on $m$, such that
\begin{equation}
\begin{aligned}
|M(g_{R,n})- \alpha| \le \epsilon + \frac{D}{R}\\
|M(h_{R,n}) - (m-\alpha)| \le \epsilon + \frac{D}{R}
\label{Meps}
\end{aligned}
\end{equation}
for all $R \ge R_0(\epsilon)$ and all $n \ge N_0(R,\epsilon)$.

Now since $\eta^2 + \psi^2 \equiv 1$, it is easy to see that
$$
\left| |\nabla u_n|^2 - |\nabla g_{R,n}|^2 - |\nabla h_{R,n}|^2 \right| \le
\frac{|u_n|^2}{R^2} \left( |\eta'|^2+|\psi'|^2 \right) + \frac{|u_n|^2}{R}\left( |\eta| |\eta'| +|\psi| |\psi'|\right),
$$
where we have suppressed the arguments $|\mathbf x - \mathbf x_n|/R$ from $\eta$, $\psi$, and their derivatives.  Also, we have
$$
 |u_n|^{p+2}-|g_{R,n}|^{p+2}-|h_{R,n}|^{p+2} = |u_n|^{p+2}\left[ (\eta^2 - \eta^{p+2})+(\psi^2 - \psi^{p+2})\right]
$$
where again we have suppressed the arguments $|\mathbf x - \mathbf x_n|/R$ from $\eta$ and $\psi$.  Since the function in brackets on the right-hand side of the last equation vanishes for $|\mathbf x - \mathbf x_n|\le R$ and for $|\mathbf x - \mathbf x_n| \ge 2R$, we have the estimate
$$
\int \left| |u_n|^{p+2}-|g_{R,n}|^{p+2}-|h_{R,n}|^{p+2} \right| d\mathbf x \le C \int_{A_{R,n}} |u_n|^{p+2}\ d\mathbf x,
$$ 
where $A_{R,n} = \left\{ \mathbf x \in \mathbb R^d: R \le |\mathbf x - \mathbf x_n| \le 2R\right\}$ and $C$ depends only on $\eta$ and $\psi$.   Therefore we can write
$$
\left|E(u_n) - \left[ E(g_{R,n})+E(h_{R,n}) \right] \right| \le D \left( \frac{1}{R} + \int_{A_{R,n}}|u_n|^{p+2}\ d\mathbf x\right),
$$
where $D$ depends only on $m$.

By Theorem \ref{GN}, there exists $C$ such that for all $n$ and $R$,
\begin{equation}
\int_{A_{R,n}}|u_n|^{p+2}\ d\mathbf x \le C \left(\int_{A_{R,n}}|u_n|^2\right)^{(2p+4-pd)/(2p+4)}\|u_n\|_{H^1}^{pd/(2p+4)}.
\label{dest1}
\end{equation}
However,
$$
\begin{aligned}
\int_{A_{R,n}}|u_n|^2\ d\mathbf x & \le \int_{A_{R,n}}\left[|u_n|^2+\beta |\nabla_y u_n|^2\right]\ d\mathbf x \\
& = \int_{B_{2R}(\mathbf x_n)}\left[|u_n|^2+\beta |\nabla_y u_n|^2\right]\ d\mathbf x -  \int_{B_R(\mathbf x_n)}\left[|u_n|^2+\beta |\nabla_y u_n|^2\right]\ d\mathbf x\\
& \le 2(\alpha + \epsilon - (\alpha - \epsilon)) = 4\epsilon,
\end{aligned}
$$
and hence from \eqref{dest1} it follows that
$$
\int_{A_{R,n}}|u_n|^{p+2}\ d\mathbf x \le C \epsilon^\tau,
$$
where $\tau=(2p+4-pd)/(2p+4) >0$.  To summarize, then, we have shown that for every $\epsilon > 0$, there exists $R_0 \ge 1$ and $N_0 \in \mathbb N$ such that for all $R \ge R_0$ and all $n \ge N_0$, \eqref{Meps} holds together with
\begin{equation}
\left|E(u_n) - \left[ E(g_{R,n})+E(h_{R,n}) \right] \right| \le D \left( \frac{1}{R} + \epsilon^\tau\right).
\label{Eeps}
\end{equation} 
 
We can now use induction to define a sequence of integers $\{n_j\}$ and sequences of functions $\{g_j\}$ and $\{h_j\}$ such that, for all $j \in \mathbb N$,
\begin{equation}
 \begin{aligned} 
& \left|M(g_j)-\alpha\right|  < 1/j,\\
& \left| M(h_j) -(m-\alpha)\right| < 1/j,\\
& \left| E(u_{n_j}) - \left[ E(g_j)+E(h_j) \right]\right| < 1/j.
 \end{aligned} 
 \label{indj}
 \end{equation}
To do this, first define $n_0 = 1$; and then assume that $n_j$, $g_j$, and $h_j$ have been defined for $j \le J$. 
 Choose $\epsilon > 0$ and $R \ge R_0(\epsilon)$ such that the right-hand sides of \eqref{Meps} and \eqref{Eeps} 
 are less than $1/(J+1)$; then choose $n_{J+1} \ge N_0(R,\epsilon)$ such that $n_{J+1} > n_J$, and 
 define $g_{J+1}=g_{R,N_{J+1}}$ and $h_{J+1}=h_{R,N_{J+1}}$. With these choices we have that
  \eqref{indj} holds for $j=J+1$.  
  
  From \eqref{indj} it follows that \eqref{MElim} holds.  Therefore, at least for all sufficiently large $j$, we can define numbers $\mu_j$ and $\nu_j$ such that
  $\displaystyle \lim_{j \to \infty}\mu_j = \lim_{j \to \infty} \nu_j = 1$, and $\tilde g_j := \mu_j g_j$ and $\tilde h_j := \nu_j h_j$ satisfy 
$$
 \begin{aligned} 
M(\tilde g_j)&=\alpha\\
 M(\tilde h_j) &= m-\alpha\\
E(u_{n_j}) &=  E(\tilde g_j)+E(\tilde h_j) + \epsilon_j,
 \end{aligned}  
 $$
 where $\displaystyle \lim_{j \to \infty} \epsilon_j = 0$.
From this it follows that
$$
E\left(u_{n_j}\right) \ge I_{\alpha}+I_{m-\alpha}+\epsilon_j
$$
for all sufficiently large $j$, and taking the limit as $j$ goes to infinity gives
$$
I_m \ge I_{\alpha}+I_{m-\alpha}.
$$
But this contradicts Lemma \ref{subadditive}, thus showing that dichotomy cannot hold.
 \end{proof}
 
 \begin{lem} Let $d \ge 1$,   $0 \le k \le d$, $0 < p < p_c(d)$, $\beta > 0$,   and $m>0$.  Suppose that $-\infty < I_m < 0$.  If $\{u_n\}$ is a minimizing sequence for $I_m$, and the ``compactness'' alternative of Lemma \ref{conccomp} holds for the sequence  $\{\rho_n\}$ defined by \eqref{defrho}, then $\{u_{n_j}(\cdot - \mathbf x_j)\}$ has a subsequence which converges in $H^1$ norm to a minimizer $u$ for $I_m$.
 \label{compactgood}
 \end{lem}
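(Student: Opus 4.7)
The plan is to translate the sequence so its mass is centered, extract a weakly convergent subsequence, identify the weak limit as a minimizer, and finally upgrade to strong $H^1$ convergence.

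Set $v_j := u_{n_j}(\cdot - \mathbf x_j)$. Since $E$ and $M$ are translation-invariant, $\{v_j\}$ is again a minimizing sequence for $I_m$, and the compactness alternative applied to $\{\rho_{n_j}\}$ gives: for each $\epsilon>0$ there exists $R_\epsilon<\infty$ such that $\int_{B_{R_\epsilon}(0)}\tilde\rho_j\,d\mathbf x\ge m-\epsilon$ for all $j$, where $\tilde\rho_j:=\rho_{n_j}(\cdot-\mathbf x_j)$. By Lemma \ref{H1bound}, $\{v_j\}$ is bounded in $H^1$, so after passing to a subsequence (still written $\{v_j\}$) I may assume $v_j\rightharpoonup u$ weakly in $H^1$ for some $u\in H^1$.

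The first step is strong convergence $v_j\to u$ in $L^{p+2}(\mathbb R^d)$. Since $\frac12|v_j|^2\le\tilde\rho_j$, the concentration bound gives $\int_{\mathbb R^d\setminus B_R(0)}|v_j|^2\,d\mathbf x\le 2\epsilon$ for $R=R_\epsilon$; combined with the uniform $H^1$ bound and Theorem \ref{GN}, this yields $\int_{\mathbb R^d\setminus B_R(0)}|v_j|^{p+2}\,d\mathbf x\le C\epsilon^\tau$ for some $\tau>0$, and the analogous bound for $u$ follows from weak lower semicontinuity. On $B_R(0)$, the Rellich-Kondrachov theorem promotes weak $H^1$ convergence to strong $L^{p+2}$ convergence. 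Letting $\epsilon\to 0$ gives $v_j\to u$ in $L^{p+2}(\mathbb R^d)$; in particular $|v_j|_{p+2}^{p+2}\to|u|_{p+2}^{p+2}$.

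The harder step, which I expect to be the main obstacle, is showing $M(u)=m$ so that $u$ is actually a minimizer. Lemma \ref{deltalem} gives $|u|_{p+2}^{p+2}\ge\delta>0$, so $u\not\equiv 0$; weak lower semicontinuity then gives $M(u)\le m$ and $E(u)\le I_m$. To rule out $M(u)<m$, I would perform a Brezis-Lieb style splitting with $w_j:=v_j-u$: since $w_j\rightharpoonup 0$ in $H^1$, the cross terms in $|v_j|_2^2$, $|\nabla_y v_j|_2^2$, and $|\nabla v_j|_2^2$ vanish, and combined with $v_j\to u$ in $L^{p+2}$ this yields $M(w_j)\to m-M(u)=:\mu$ and $E(v_j)=E(u)+\tfrac12|\nabla w_j|_2^2+o(1)$, so $\tfrac12|\nabla w_j|_2^2\to I_m-E(u)$ and $E(w_j)\to I_m-E(u)$. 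If $\mu>0$, rescaling $w_j$ by $\lambda_j:=\sqrt{\mu/M(w_j)}\to 1$ produces an admissible competitor for $I_\mu$ with $E(\lambda_j w_j)\to I_m-E(u)$, hence $I_\mu\le I_m-E(u)\le I_m-I_{M(u)}$. This contradicts the strict subadditivity of Lemma \ref{subadditive}, forcing $M(u)=m$; since then $E(u)\ge I_m$ by definition, we conclude $E(u)=I_m$ and $u\in G_m$.

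To finish, I upgrade to strong $H^1$ convergence. Since $|v_j|_2^2$ and $|\nabla_y v_j|_2^2$ are separately weakly lower semicontinuous and $M(v_j)\to M(u)$, equality in the weighted sum forces $|v_j|_2\to|u|_2$ and $|\nabla_y v_j|_2\to|\nabla_y u|_2$. From $E(v_j)\to E(u)$ and the $L^{p+2}$ convergence I similarly obtain $|\nabla v_j|_2\to|\nabla u|_2$. Weak $L^2$ convergence of $v_j$ and each of its first-order derivatives, paired with convergence of the corresponding $L^2$ norms, then gives strong convergence in $L^2$ of each, and hence $v_j\to u$ in $H^1$, which is the desired conclusion.
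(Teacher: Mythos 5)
Your proof is correct, and its overall skeleton (translate by $\mathbf x_j$, extract a weak $H^1$ limit $u$, upgrade to strong $L^{p+2}$ convergence via the concentration bound on the tails plus local compactness, show $M(u)=m$, then convert norm convergence plus weak convergence into strong $H^1$ convergence) matches the paper's. The one step where you take a genuinely different route is the exclusion of mass loss, $M(u)<m$. You run the Lions-style argument: a Brezis--Lieb splitting $v_j = u + w_j$, the resulting identities $M(w_j)\to m-M(u)=\mu$ and $E(w_j)\to I_m - E(u)$, and a rescaled competitor giving $I_m \ge I_\mu + I_{M(u)}$, which contradicts the strict subadditivity of Lemma \ref{subadditive}. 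The paper instead disposes of this in two lines: if $M(u)<m$, set $v=\theta u$ with $\theta=\sqrt{m/M(u)}>1$; then $M(v)=m$ and $E(v)\le \theta^2 E(u)\le \theta^2 I_m < I_m$, using $E(u)\le I_m<0$ and $\theta>1$, contradicting the definition of $I_m$. Both arguments are valid here (each ultimately leans on $I_m<0$, directly in the paper's case and through Lemma \ref{subadditive} in yours); the paper's is more economical, while yours is the more robust template that would survive in settings where a single scaling competitor does not immediately beat $I_m$. Two minor points worth tightening in your write-up: the application of Theorem \ref{GN} on the exterior region $\mathbb R^d\setminus B_R(0)$ needs a word of justification (e.g., apply the global inequality to $\chi v_j$ for a cutoff $\chi$ supported outside $B_R(0)$, exactly as the paper implicitly does in its estimate \eqref{dest1}); and before defining $\lambda_j$ you should note that $M(w_j)>0$ for large $j$, which follows from $M(w_j)\to\mu>0$.
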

 
 \begin{proof}  Define $\tilde u_{n_j}(\mathbf x) = u_{n_j}(\mathbf x - \mathbf x_j)$ and $\tilde \rho_{n_j}(\mathbf x)=\rho_{n_j}(\mathbf x - \mathbf x_j)$ for $j \in \mathbb N$.    By Lemma \ref{H1bound}, $\{\tilde u_{n_j}\}$ is bounded in $H^1$, so by the Banach-Alaoglu theorem and Rellich's Lemma, on passing to a further subsequence we may assume that $\{\tilde u_{n_j}\}$ converges weakly in $H^1$ to some function $u \in H^1$, and converges strongly to $u$ in $L^2(B)$ for every ball $B$ of finite radius in $\mathbb R^d$.
 
   We claim that $\tilde u_{n_j}$ converges strongly to $u$ in $L^2(\mathbb R^d)$.  To see this, let $\epsilon > 0$ be given.  From \eqref{compactness} it follows that there exists an $R < \infty$ such that for all $j \in \mathbb N$,
 $$
 \int_{|\mathbf x| \ge R}\left|\tilde u_{n_j}\right|^2\ d\mathbf x \le 2\int_{|\mathbf x| \ge R}\tilde \rho_{n_j}\ d\mathbf x \le \epsilon,
 $$
 and by increasing $R$ if necessary, we may also assume that
 $$
  \int_{|\mathbf x| \ge R}\left|u\right|^2\ d\mathbf x \le \epsilon.
 $$
 Writing
 $$
 \int \left|\tilde u_{n_j}-u\right|^2 \ d\mathbf x \le \int_{B_R(\mathbf 0)} \left|u_{n_j}-u\right|^2\ d\mathbf x + 2 \int_{|\mathbf x| \ge R}\left|\tilde u_{n_j}\right|^2\ d\mathbf x + 2 \int_{|\mathbf x| \ge R}\left|u\right|^2\ d\mathbf x,
 $$
 and using the fact that $\{\tilde u_{n_j}\}$ converges to $u$ in $L^2(B_R(\mathbf 0))$, we obtain that
 $$
  \int \left|\tilde u_{n_j}-u\right|^2 \ d\mathbf x \le 5 \epsilon
 $$
 for all sufficiently large $j$,  thus proving the claim.
 
 Since  $\tilde u_{n_j}$ converges to $u$ in $L^2$ and  $\tilde u_{n_j}$ is bounded in $H^1$, it follows from Theorem \ref{GN} that  $\tilde u_{n_j}$ also converges to $u$ in $L^{p+2}$. Therefore, using the lower semicontinuity of the norm in $H^1$, we deduce that 
\begin{equation}
 E(u) \le \lim_{j \to \infty} E(\tilde u_{n_j}) = I_m.
 \label{Eulimit}
\end{equation}
 
 We now claim that $M(u)=m$.   By the lower semicontinuity of the norm in $X_{k,0}$, we have that 
 $$ M(u) \le \lim_{j \to \infty} M(\tilde u_{n_j})=m.$$
   Assume for contradiction that $M(u) < m$, and let $v= \theta u$, where $\theta = \sqrt{m/M(u)} > 1$.  
   Then  
$$
E(v) = \frac{\theta^2}{2}\int|\nabla v|^2 - \frac{\theta^{p+2}}{p+2}\int |u|^{p+2}\ d \mathbf x \le \theta^2 E(u) \le \theta^2 I_m < I_m,
$$
where we have used that $I_m < 0$.  But since $M(v)=m$, this contradicts the definition of $I_m$.

Since $M(u)=m$, then from \eqref{Eulimit} we have that $u$ is a minimizer for  $I_m$.  Since equality holds in \eqref{Eulimit}, then
$$
\int |\nabla u|^2\ d\mathbf x = \lim_{j \to \infty} \int |\nabla \tilde u_{n_j}|^2\ d \mathbf x.
$$
Hence $\displaystyle \|u\|_1 = \lim_{j \to \infty} \left\|\tilde u_{n_j}\right\|_1$, which together with the weak convergence of  $\{\tilde u_{n_j}\}$ to $u$ in $H^1$ is enough to conclude that  $\{\tilde u_{n_j}\}$ converges strongly to $u$ in $H^1$.   
 \end{proof}

 {\it Proof of Theorem \ref{precompact}.}  Let $\{u_n\}$ be a minimizing sequence for $I_m$, and define $\rho_n$ by \eqref{defrho}.  By Lemmas \ref{conccomp}, \ref{novanish}, \ref{nodichotomy}, and \ref{compactgood},  there is a sequence $\{\mathbf x_j\}$ in $\mathbb R^d$ and a subsequence $\{u_{n_j}\}$ of  $\{u_n\}$ such that $\{u_{n_j}(\cdot - \mathbf x_j)\}$ converges in $H^1$ to some  $u$ in $G_m$.   In particular, $G_m$ is nonempty.  From the definition of $G_m$ and the translation invariance of $E$ and $M$, it follows immediately that $u(\cdot +\mathbf x_j) \in G_m$ for every $j \in \mathbb N$.  Therefore for all $j$ we have
 $$
\inf_{v \in G_m}\|u_{n_j}-v\|_1 \le \|u_{n_j} - u(\cdot +\mathbf x_j)\|_1= \|u_{n_j}(\cdot - \mathbf x_j)-u\|_1,
 $$
 from which \eqref{closetoG} follows.
 
 To prove the stability of $G_m$, we can argue by contradiction.  If $G_m$ is not stable, then there exist a number $\epsilon>0$ and sequences $\{u_{0n}\}$ in $X_{k,1}$, $\{g_n\}$ in $G_m$, and $\{t_n\}$ in $\mathbb R$ such that
 \begin{equation}
  \lim_{n \to \infty}\|u_{0n}-g_n\|_1 = 0,
 \label{initclosetoG}
 \end{equation}
  but the solutions $u_n(t)$ of \eqref{rNLS} with initial data $u_n(0)=u_{0n}$ satisfy
 \begin{equation}
 \inf_{v \in G_m} \|u_n(t_n)-v\|_1 \ge \epsilon
 \label{diffgteps}
 \end{equation}
 for every $n \in \mathbb N$.  From \eqref{initclosetoG} it follows that $\displaystyle \lim_{n \to \infty} E(u_{0n})=I_m$ and $\displaystyle \lim_{n \to \infty}M(u_{0n})=m$, and since $E$ and $M$ are conserved functionals for \eqref{rNLS}, also that
 $\displaystyle \lim_{n \to \infty}E(u_n(t_n))=I_m$ and $\displaystyle \lim_{n \to \infty}M(u_n(t_n))=m$.  We may assume $M(u_n(t_n)) \ne 0$ for all $n$ in $\mathbb N$.
 
 Define
 $v_n = \theta_n u_n(t_n)$, where $\theta_n = \sqrt{m/M(u_n(t_n))}$, so that  $\displaystyle \lim_{n \to \infty} \theta_n = 1$ and $M(v_n)=m$ for all $n$ in $\mathbb N$.  Since $\displaystyle \lim_{n \to \infty} E(v_n)=I_m$, then $\{v_n\}$ is a minimizing sequence for $I_m$, and therefore, by what has already been shown, there exists a subsequence $\{v_{n_j}\}$ such that
 $$
 \lim_{j \to \infty} \inf_{v \in G_m}\|v_{n_j}-v\|_1 = 0.
 $$
 But from \eqref{diffgteps} it follows that for $j$ sufficiently large we have
 $$
 \inf_{v \in G_m} \|v_{n_j}-v\|_1 \ge \frac{\epsilon}{2},
 $$
 a contradiction.  \qed

 \section{Existence of stable ground states}
 \label{sec:existstable}
 
In this section, for each $d \ge 1$ and $k \in \{0, 1, 2, \dots, d\}$ we determine ranges of the variables $m$ and $p$ for which  the assumption $-\infty < I_m < 0$ of Theorem \ref{precompact} is satisfied, and therefore stable ground states exist.  The results are summarized in Theorems \ref{summaryk0} and \ref{summaryk1} at the end of the section. 
 
 \begin{lem} Suppose $d \ge 1$. 
 
 \begin{enumerate}
 
 \item If $k=d$ and $0 < p < p_c(d)$, then $I_m > -\infty$ for all $m > 0$. 
   
\item  If $0 \le k \le d$ and $0 < p < 4/d$, then $I_m > - \infty$ for all $m > 0$.  
  
\item If $d \ge 2$, $1 \le k \le d-1$, and $\displaystyle 0 < p < \min\left( \frac{4}{d-k},p_c(d)\right)$, then $I_m > -\infty$ for all $m > 0$.    

\end{enumerate}
 \label{Imlezero}
 \end{lem}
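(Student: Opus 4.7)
The goal in each case is to show that on the constraint set $\{M(u)=m\}$, the functional $E(u)$ is bounded below, which amounts to controlling the negative term $-\tfrac{1}{p+2}|u|_{p+2}^{p+2}$ by the sum of $|\nabla u|_2^2$ and quantities bounded in terms of $m$. The overall strategy is: estimate $|u|_{p+2}^{p+2}$ via a (possibly anisotropic) Gagliardo--Nirenberg inequality, use $M(u)=m$ to bound $|u|_2$ and the $y$-derivatives, and check that the remaining power of $|\nabla u|_2$ is strictly less than $2$ so that it is absorbed (up to an additive constant) by the kinetic term $\tfrac12 |\nabla u|_2^2$. The three parts differ only in which GN inequality is invoked and how the constraint controls the various norms.

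For part 1, with $k=d$, the functional $M$ satisfies $2M(u) = |u|_2^2 + \beta|\nabla u|_2^2$, so $\|u\|_1^2 \le (1+\beta^{-1})\cdot 2m$. Standard Sobolev embedding (or the first inequality in Theorem \ref{GN} applied to the $L^2$ and $|\nabla u|_2$ factors, then bounded by $\|u\|_1$) immediately gives $|u|_{p+2}^{p+2} \le C(m,\beta)$, from which $E(u) \ge -C(m,\beta)/(p+2)$.

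For part 2, apply Theorem \ref{GN} directly: $|u|_{p+2}^{p+2} \le C\, |u|_2^{(p+2)-pd/2}\, |\nabla u|_2^{pd/2}$. Since $k\ge 0$ implies $|u|_2^2 \le 2M(u)=2m$, this becomes $|u|_{p+2}^{p+2} \le C(m)\,|\nabla u|_2^{pd/2}$. The hypothesis $p<4/d$ gives $pd/2<2$, so
\[
E(u) \ge \tfrac12 |\nabla u|_2^2 - \tfrac{C(m)}{p+2}|\nabla u|_2^{pd/2},
\]
and the right-hand side, viewed as a function of $t=|\nabla u|_2 \ge 0$, attains a finite minimum. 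Hence $I_m>-\infty$.

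Part 3 is the main point and requires the anisotropic inequality of Theorem \ref{AGN}. Raise \eqref{aniso} to the power $p+2$; using $\mu_0(p+2) = (p+2)-pd/2$ and $\mu(p+2)=p/2$, one obtains
\[
|u|_{p+2}^{p+2} \le C\,|u|_2^{(p+2)-pd/2}\,\prod_{j=1}^{d-k}|u_{x_j}|_2^{p/2}\,\prod_{j=1}^{k}|u_{y_j}|_2^{p/2}.
\]
The constraint $M(u)=m$ controls the $L^2$ factor by $|u|_2^2 \le 2m$ and each $y$-derivative by $|u_{y_j}|_2^2 \le 2m/\beta$, so the product of $y$-derivative terms is bounded by $(2m/\beta)^{kp/4}$. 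For the $x$-derivative product, apply AM--GM to $|u_{x_j}|_2^2$:
\[
\prod_{j=1}^{d-k}|u_{x_j}|_2^{p/2} \le \left(\frac{1}{d-k}\sum_{j=1}^{d-k}|u_{x_j}|_2^2\right)^{(d-k)p/4} \le C'\,|\nabla u|_2^{(d-k)p/2}.
\]
Combining everything, $|u|_{p+2}^{p+2} \le C(m,\beta)\,|\nabla u|_2^{(d-k)p/2}$. The hypothesis $p<4/(d-k)$ ensures $(d-k)p/2<2$, so exactly as in part 2 the map $t \mapsto \tfrac12 t^2 - C t^{(d-k)p/2}$ is bounded below on $[0,\infty)$, giving $E(u)\ge -C(m,\beta)$ and thus $I_m>-\infty$.

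The main obstacle is simply to line up the exponents in part 3 so that the $x$-derivative product is correctly collapsed into $|\nabla u|_2$ by AM--GM while keeping track of constants depending only on $m$, $\beta$, $d$, $k$, $p$; once this bookkeeping is done, the crucial inequality $(d-k)p/2<2$ does all the work, and the fact that $p<p_c(d)$ is used only to guarantee that the Gagliardo--Nirenberg embedding in Theorem \ref{AGN} applies.
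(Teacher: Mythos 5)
Your proposal is correct and follows essentially the same route as the paper: Sobolev embedding plus the constraint for part 1, the standard Gagliardo--Nirenberg inequality for part 2, and the anisotropic inequality of Theorem \ref{AGN} for part 3, in each case reducing to the observation that the relevant power of the gradient norm is strictly less than $2$. The only cosmetic difference is in part 3, where you collapse the product of $x$-derivative norms via AM--GM into $|\nabla u|_2^{(d-k)p/2}$ while the paper simply bounds each factor $|u_{x_j}|_2$ by $\|u\|_1$; both yield the same subquadratic exponent and hence the same conclusion.
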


 \begin{proof} 
 Under the assumptions of part 1, we have from Theorem \ref{GN} that 
  \begin{equation}
 E(u) \ge \frac12\|u\|_1^2 - \frac{1}{p+2}|u|_{p+2}^{p+2} \ge \frac12 \|u\|_1^2 - C \|u\|_1^{p+2},
\label{Ebelow}
\end{equation}
where $C$ depends on $p$ and $d$ but is independent of $u$. If $k=d$ and $m> 0$, then  from \eqref{defM} and $M(u) = m$ we get that $\|u\|_1\le B=(2m(1+\beta^{-2}))^{1/2} < \infty$.  Therefore from \eqref{Ebelow} we have  
$I_m \ge - C B^{p+2} > -\infty$.

To prove part 2, we first note that it follows from Theorem \ref{GN} that
 $$
 E(u) \ge  \frac12\|u\|_1^2 - C |u|_2^{p+2-pd/2}\|u\|_1^{pd/2}.
 $$
 Now if $M(u)=m$, then $|u|_2^2 \le 2M(u) \le 2m$, so
 \begin{equation}
 E(u) \ge f\left(\|u\|_1\right), 
 \label{Eulower}
 \end{equation}
 where $f(x)=\frac12 x^2 - C (2m)^b x^{pd/2}$ and $b=\frac12(p+2-pd/2) \ge 0$. 
  If $0 < p < 4/d$, then  $pd/2< 2$, and so the function $f(x)$ has a finite minimum value on $\{0 \le x < \infty\}$.
  From \eqref{Eulower} we have that $I_m$ is greater than or equal to this minimum value, so $I_m > -\infty$.
  
  To prove part 3, we note that under the stated assumptions, by the anisotropic Gagliardo-Nirenberg inequality in Theorem \ref{AGN}, there exists $C > 0$ such that \eqref{aniso} holds for all $u \in H^1$, with $\mu_0$ and $\mu$ given by \eqref{mus}.   Suppose $M(u)=m$.  Then 
  $$
  \begin{aligned} 
    |u_{x_j}|_2 &\le \|u\|_1 \qquad\quad\  \text{(for $j=1, \dots, d-k$),} \\
 |u_{y_j}|_2^2 &\le \frac{2}{\beta} M(u) \le \frac{2m}{\beta}  \quad \text{(for $j=1, \dots, k$),} \\
  |u|_2^2 & \le 2M(u) \le 2m,\\ 
 \end{aligned}
 $$  
 so it follows from \eqref{aniso} and \eqref{mus} that
  $$
  \int |u|^{p+2}\ d \mathbf x \le C \|u\|_1^{p(d-k)/2},
  $$
  where $C$ depends on $d$, $p$, $\beta$, and $M(u)=m$ but is otherwise independent of $u$.  
  
  Now we can argue as in the proof of part 2.  We write
  $$
  \begin{aligned}
  E(u) &= \frac12 \int |\nabla u|^2\ d \mathbf x - \frac{1}{p+2}\int |u|^{p+2}\ d\mathbf x \\
  &= \frac12 \|u\|_1^2 - \frac12 |u|_2^2  - \frac{1}{p+2}\int |u|^{p+2}\ d\mathbf x\\
  &\ge  \frac12 \|u\|_1^2 - m -C \|u\|_1^{p(d-k)/2}\\
  &=g\left(\|u\|_1\right),
  \end{aligned}
  $$
   where $g(x)=\frac12 x^2 -m - C x^{p(d-k)/2}$. 
   Since $0 < p < 4/(d-k)$, then  $p(d-k)/2< 2$, and so the function $g(x)$ has a finite minimum value on $[0,\infty)$, allowing us to conclude that $I_m > -\infty$.  
  \end{proof}
  
  We pause to note an interesting consequence of Lemma \ref{Imlezero}. 
  
  \begin{cor}  Suppose $d \ge 1$, $0 \le k \le d$, $\beta > 0$, $0 < p < \min\left(\frac{4}{d-k},p_c(d)\right)$, and $m > 0$.
  Let $U_m = \{ \omega > 0: M(\varphi_\omega)=m\}$.  If there exists $\omega \in U_m$ such that $E(\varphi_\omega)<0$, then $-\infty < I_m < 0$ and $G_m$ is stable.   Conversely, if $-\infty < I_m <0$, then there exists $\omega \in U_m$ such that $E(\varphi_\omega)<0$.
  \label{Eneg}
  \end{cor}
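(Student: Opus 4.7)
My plan is to derive this corollary directly from the lemmas already established earlier in the paper, essentially by combining (i) the upper bound $I_m \le E(\varphi_\omega)$ obtained by testing the variational problem against $\varphi_\omega$, (ii) the lower bound $I_m > -\infty$ supplied by Lemma \ref{Imlezero}, and (iii) the structural description of ground states in Lemma \ref{mineqphi}. The restriction $0 < p < \min(4/(d-k), p_c(d))$ is exactly what is needed to invoke Lemma \ref{Imlezero}, since that hypothesis falls into one of its three cases (case 1 if $k=d$, case 2 if $k=0$ or $p < 4/d$, case 3 in the remaining subcase).

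For the forward direction, assume there exists $\omega \in U_m$ with $E(\varphi_\omega) < 0$. Since $M(\varphi_\omega)=m$ by the definition of $U_m$, the function $\varphi_\omega$ is a valid competitor in the variational problem defining $I_m$, so $I_m \le E(\varphi_\omega) < 0$. Lemma \ref{Imlezero} then supplies $I_m > -\infty$, and hence condition \eqref{neccond} of Theorem \ref{precompact} is satisfied, yielding stability of $G_m$.

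For the converse, suppose $-\infty < I_m < 0$. Theorem \ref{precompact} guarantees that $G_m$ is nonempty, so we may choose some $v \in G_m$. Because $I_m < 0$, Lemma \ref{mineqphi} applies and gives
$$
v(\mathbf x) = e^{i\theta}\varphi_\omega(\mathbf x + \mathbf x_0)
$$
for some $\omega > 0$, $\theta \in \mathbb R$, and $\mathbf x_0 \in \mathbb R^d$. Both functionals $E$ and $M$ are invariant under multiplication by a constant phase and under translations of the spatial variable, so $M(\varphi_\omega) = M(v) = m$ (hence $\omega \in U_m$) and $E(\varphi_\omega) = E(v) = I_m < 0$. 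This produces the desired $\omega$.

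There is no real obstacle here; the corollary is essentially a bookkeeping exercise that packages Lemmas \ref{mineqphi} and \ref{Imlezero} together with Theorem \ref{precompact}. The only point worth double-checking is that each of the three cases in Lemma \ref{Imlezero} truly covers the hypothesis $0 < p < \min(4/(d-k),p_c(d))$ for every admissible $(d,k)$; once that is noted, both implications are immediate.
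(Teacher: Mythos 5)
Your proposal is correct and follows essentially the same route as the paper's own proof: the forward direction uses $\varphi_\omega$ as a competitor to get $I_m \le E(\varphi_\omega) < 0$, invokes Lemma \ref{Imlezero} for the lower bound, and applies Theorem \ref{precompact}; the converse combines Theorem \ref{precompact} with Lemma \ref{mineqphi} and the phase/translation invariance of $E$ and $M$. Your extra check that the hypothesis on $p$ lands in one of the three cases of Lemma \ref{Imlezero} for every admissible $(d,k)$ is a worthwhile detail that the paper leaves implicit.
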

   
  \begin{proof}
   Suppose there exists $\omega \in U_m$ such that $E(\varphi_\omega) < 0$.  Since $\omega \in U_m$, then $I_m \le E(\varphi_\omega)$ by definition of $I_m$.  Hence $I_m < 0$, and $I_m > -\infty$ by Lemma \ref{Imlezero}.  So $G_m$ is stable by Theorem \ref{precompact}. 
  
  Conversely, suppose $-\infty < I_m < 0$.  Then by Theorem \ref{precompact}, $G_m$ is nonempty, and by Theorem \ref{mineqphi}, there exists some $\omega \in U_m$ such that $\varphi_\omega \in G_m$, with  $E(\varphi_\omega) = I_m < 0$. 
  \end{proof}  
  
We remark that the condition that $E(\varphi_\omega)<0$ for some $\omega \in U_m$ is not necessary for the stability of $G_m$.   Indeed, as we see below in the proof of Lemma \ref{d1k1lem}, if $d=1$, $k=1$, $p > 4$, $\beta > 0$, $\omega_2 = (p-4)/4\beta$, and $m=M(\varphi_{\omega_2})$, then $U_m$ consists of exactly two numbers $\omega_2$ and  $\omega_3$, for which we have $E(\varphi_{\omega_2}) = 0$ and $E(\varphi_{\omega_3})>0$. Since $I_m = 0$, then $G_m = \mathcal P_{\omega_2}$.  But as shown in the proof of Lemma \ref{d1k1lem}, we have $\frac{d}{d\omega}M(\varphi_\omega)>0$ at $\omega=\omega_2$, so  $\mathcal P_{\omega_2}$ is stable by Theorem \ref{GSSstabilitythm}.
  
  \begin{lem}  Suppose $d \ge 1$.
  If $0 \le k \le d$ and $0 < p < 4/d$, then $I_m < 0$ for all $m > 0$.
  \label{Ilezero1}
  \end{lem}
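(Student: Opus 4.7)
The strategy is to exhibit, for each $m>0$, a test function $u \in H^1$ with $M(u)=m$ and $E(u)<0$, which immediately gives $I_m \le E(u)<0$. The natural source is the dilation already used in Lemma \ref{monotone}: in that lemma, the authors showed only $I_m \le 0$ because the negative term in $E(u_n)$ decays to zero; here we need to keep the negative term dominant, which is exactly what the sub-critical hypothesis $p<4/d$ provides.

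Concretely, I would fix any nontrivial $v \in C_c^\infty(\mathbb R^d)$ with $\tfrac12 |v|_2^2 = m$, and set $v_n(\mathbf x) = n^{-d/2} v(\mathbf x / n)$. A direct change of variables gives $|v_n|_2^2 = 2m$, $|\nabla v_n|_2^2 = n^{-2}|\nabla v|_2^2$, $|\nabla_y v_n|_2^2 = n^{-2}|\nabla_y v|_2^2$, and $|v_n|_{p+2}^{p+2} = n^{-pd/2}|v|_{p+2}^{p+2}$. Hence
$$
E(v_n) = \frac{1}{2n^{2}} \int |\nabla v|^2\,d\mathbf x - \frac{1}{(p+2)\, n^{pd/2}} \int |v|^{p+2}\,d\mathbf x.
$$
Since $p<4/d$ forces $pd/2<2$, the ratio of the positive term to the negative term is $O(n^{pd/2 - 2}) \to 0$, so $E(v_n) < 0$ for all sufficiently large $n$.

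When $k=0$ we have $M(v_n)=m$ exactly and we are done. When $k \ge 1$, instead $M(v_n) = m + \tfrac{\beta}{2n^2}|\nabla_y v|_2^2 > m$, so I would correct this by setting $\tilde v_n = c_n v_n$ with $c_n = \sqrt{m/M(v_n)}$, noting $c_n \to 1$. Then $M(\tilde v_n)=m$ and
$$
E(\tilde v_n) = \frac{c_n^{2}}{2n^{2}} \int |\nabla v|^2\,d\mathbf x - \frac{c_n^{p+2}}{(p+2)\,n^{pd/2}} \int |v|^{p+2}\,d\mathbf x.
$$
Because $c_n \to 1$, the same asymptotic comparison of $n^{-2}$ against $n^{-pd/2}$ shows that $E(\tilde v_n)<0$ for large $n$, so $I_m \le E(\tilde v_n)<0$.

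There is no real obstacle here: the argument is essentially the sharpening of Lemma \ref{monotone} that uses the extra room afforded by $pd/2<2$. The only mildly delicate point is handling the $k \ge 1$ case, where the dilation $v_n$ does not preserve $M$ exactly; the scalar rescaling $\tilde v_n = c_n v_n$ with $c_n \to 1$ dispatches this without changing the relative size of the two terms in $E$.
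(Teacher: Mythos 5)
Your proposal is correct and follows essentially the same route as the paper: both arguments spread a fixed test function out by dilation and use $pd/2<2$ to make the negative term in $E$ dominate the gradient term, while adjusting to keep $M$ equal to $m$. The only difference is bookkeeping --- the paper enforces the constraint exactly through a two-parameter family $\alpha u(\theta\mathbf x)$ with $\theta(\alpha)\sim\alpha^{2/d}$ solved from $M=m$, whereas you use the explicit $L^2$-preserving dilation and then correct by a scalar factor $c_n\to 1$, which is an equally valid (and slightly more direct) way to close the same argument.
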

 
 \begin{proof} 
 Suppose first that $1 \le k \le d$. 
  Fix $u \in H^1$ such that $u$ is not identically zero.  For each $\alpha > 0$ and $\theta > 0$, we have
  \begin{equation}
 M(\alpha u(\theta \mathbf x)) = \alpha^2\theta^{-d}\left(\frac12 \int|u|^2\ d\mathbf x\right)+ 
  \alpha^2 \theta^{2-d}\left(\frac{\beta}{2}\int|\nabla_y u|^2\ d\mathbf x\right). 
  \label{Malphau}
  \end{equation}
  
  For fixed $m>0$, the equation
  \begin{equation*}
 M(\alpha u(\theta \mathbf x)) = m
 \end{equation*} 
 can be rewritten in the form
 \begin{equation}
 \alpha^2 =f(\theta):= \frac{m\theta^d}{A+B\theta^2},
 \label{alphatheta}
 \end{equation}
 where $A$ and $B$ are independent of $\alpha$ and $\theta$.   Analyzing the behavior of $f(\theta)$ near $\theta = 0$, we see easily that there exist $\theta_0 > 0$ and $\alpha_0 > 0$ such that for all $\alpha \in (0,\alpha_0)$, there is a unique solution $\theta = \theta(\alpha)$ of \eqref{alphatheta} satisfying 
 \begin{equation}
 C_1 \alpha^{2/d} \le \theta \le C_2 \alpha^{2/d} 
 \label{sizeoftheta}
 \end{equation}
 for some constants $C_1>0$ and $C_2 > 0$.   Writing now
  \begin{equation}
  E(\alpha u(\theta 
  \mathbf x)) = \alpha^2 \theta^{2-d}\left(\frac12 \int|\nabla u|^2\ d\mathbf x \right)
    - \alpha^{p+2}\theta^{-d}\left(\frac{1}{p+2}\int |u|^{p+2}\ d\mathbf x\right),
  \label{Ealphau}
  \end{equation}
  we see from \eqref{sizeoftheta} and \eqref{Ealphau} that  $E(\alpha u(\theta \mathbf x))<0$  
 for $\alpha$  sufficiently close to zero.  Hence $I_m < 0$.
 
 In case $k=0$ we can argue similarly: we choose any $u \in H^1$ which is not identically zero; and now, instead of \eqref{Malphau}, we have
 \begin{equation*}
 M(\alpha u(\theta \mathbf x)) = \alpha^2\theta^{-d}\left(\frac12 \int|u|^2\ d\mathbf x\right).
   \end{equation*}
  Therefore, for all $m>0$ and all $\alpha > 0$, we have $M(\alpha u(\theta \mathbf x))=m$ if we take $\theta = \alpha^{2/d}(m M(u))^{2/d}$.  Then as above it follows from \eqref{Ealphau} that $E(\alpha u(\theta \mathbf x)) < 0$ for $\alpha$ sufficiently small, and hence that $I_m < 0$. 
 \end{proof}
 
 \begin{lem} Suppose $d \ge 1$, $k=0$, and $p=4/d$.  Let $m_0=M(Q_{d,p})$, where $Q_{d,p}$ is as defined in Lemma \ref{Rdpdef}.   Then $I_m = 0$ for all $m \le m_0$, and $I_m = -\infty$ for all $m > m_0$.  We have $m_0 = m(\varphi_\omega)$ for all $\omega > 0$.
\label{critpower}
\end{lem}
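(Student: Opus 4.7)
The plan is to combine the sharp Gagliardo-Nirenberg inequality of Theorem~\ref{GN} with the Pohozaev identities for $Q_{d,p}$ to identify the threshold, and then use the $L^2$-critical scaling $u_\lambda(\mathbf x) = \lambda^{d/2} u(\lambda \mathbf x)$ to drive $E$ to $-\infty$ once one is above the threshold.

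First, I would show that $E(u) \ge 0$ whenever $M(u) \le m_0$. When $p = 4/d$, the exponent $pd/(2p+4)$ in \eqref{GNineq} equals $d/(d+2)$, so raising \eqref{GNineq} to the $(p+2)$-th power gives $|u|_{p+2}^{p+2} \le C_{d,p}^{p+2} |u|_2^{4/d} |\nabla u|_2^2$, in which the power of $|\nabla u|_2$ is exactly $2$ (this is the $L^2$-critical feature).  The Pohozaev identities for \eqref{phieqn1} force $|\nabla Q_{d,p}|_2^2 = (d/2)|Q_{d,p}|_2^2$ and $|Q_{d,p}|_{p+2}^{p+2} = ((d+2)/2)|Q_{d,p}|_2^2$, and using the fact that $Q_{d,p}$ saturates \eqref{GNineq} pins down the sharp constant as $\frac{C_{d,p}^{p+2}}{p+2}(2m_0)^{2/d} = \tfrac12$.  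Hence, for any $u \in H^1$ with $M(u) = m$, one obtains
\begin{equation*}
E(u) \ge \frac{|\nabla u|_2^2}{2}\left[1 - \left(\frac{m}{m_0}\right)^{2/d}\right],
\end{equation*}
which is $\ge 0$ when $m \le m_0$. Combined with $I_m \le 0$ from Lemma~\ref{monotone}, this gives $I_m = 0$ for $m \le m_0$.

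Next, for $m > m_0$, I would exhibit functions with $M = m$ and $E$ arbitrarily negative. Taking $u = \sqrt{m/m_0}\, Q_{d,p}$, the Pohozaev relations above yield $E(Q_{d,p}) = 0$ and consequently $E(u) = \tfrac{d}{4}|Q_{d,p}|_2^2\bigl((m/m_0) - (m/m_0)^{(p+2)/2}\bigr) < 0$ since $m/m_0 > 1$. The $L^2$-critical scaling $u_\lambda(\mathbf x) = \lambda^{d/2} u(\lambda \mathbf x)$ leaves $M(u_\lambda) = M(u) = m$ unchanged (since $k=0$) and produces $E(u_\lambda) = \lambda^2 E(u) \to -\infty$ as $\lambda \to \infty$. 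Hence $I_m = -\infty$.

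Finally, the identity $m_0 = m(\varphi_\omega)$ is a one-line check from the formula for $\varphi_\omega$ in Lemma~\ref{groundunique}: with $k=0$ and $p = 4/d$ we obtain $M(\varphi_\omega) = \tfrac12\, \omega^{2/p - d/2}|Q_{d,p}|_2^2$, and the exponent $2/p - d/2$ vanishes precisely when $p = 4/d$. The only mildly delicate step in the whole argument is the bookkeeping to re-express the sharp Gagliardo-Nirenberg constant in terms of $m_0$; the rest is scaling and Pohozaev.
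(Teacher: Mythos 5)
Your proposal is correct, and the first and third parts (the coercivity bound via the sharp Gagliardo--Nirenberg constant for $m \le m_0$, and the scaling computation showing $M(\varphi_\omega)\equiv M(Q_{d,p})$) coincide with the paper's argument, modulo your extra bookkeeping that re-expresses the sharp constant through the Pohozaev identities $|\nabla Q_{d,p}|_2^2 = (d/2)|Q_{d,p}|_2^2$ and $|Q_{d,p}|_{p+2}^{p+2} = ((d+2)/2)|Q_{d,p}|_2^2$ (the paper simply quotes the inequality in the form $E(u)\ge \tfrac12|\nabla u|_2^2[1-(|u|_2/|Q_{d,d/4}|_2)^p]$). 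Where you genuinely diverge is the case $m>m_0$: the paper only checks directly that $E(\lambda Q_{d,p})<0$ for the appropriate $\lambda>1$, hence $I_m<0$, and then rules out $I_m>-\infty$ by contradiction, invoking Theorem~\ref{precompact} and Lemma~\ref{mineqphi} to produce a minimizer $\varphi_\omega$ with $M(\varphi_\omega)=m$, which is impossible since $M(\varphi_\omega)\equiv m_0$. You instead exploit the $L^2$-critical scaling $u_\lambda(\mathbf x)=\lambda^{d/2}u(\lambda\mathbf x)$, which fixes $M$ (as $k=0$) and gives $E(u_\lambda)=\lambda^2E(u)\to-\infty$ once $E(u)<0$. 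Your route is more elementary and self-contained, since it avoids the concentration-compactness machinery behind Theorem~\ref{precompact} entirely; the paper's indirect argument has the mild advantage of not needing the explicit scaling identity, but nothing essential is lost either way. I verified the scaling: $|\nabla u_\lambda|_2^2=\lambda^2|\nabla u|_2^2$ and $|u_\lambda|_{p+2}^{p+2}=\lambda^{dp/2}|u|_{p+2}^{p+2}$ with $dp/2=2$ exactly when $p=4/d$, so $E$ is indeed homogeneous of degree $2$ under this scaling, and your formula $E(\sqrt{m/m_0}\,Q_{d,p})=\tfrac{d}{4}|Q_{d,p}|_2^2\bigl((m/m_0)-(m/m_0)^{(p+2)/2}\bigr)<0$ checks out.
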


\begin{proof} Taking $p=d/4$ in Theorem \ref{GN}, we obtain that
$$
E(u) = \frac12 \left|\nabla u\right|^{2}_{2} - \frac{1}{p+2}|u|_{p+2}^{p+2}\ge  \frac12 \left|\nabla u\right|^{2}_{2}\left[1- \left(\frac{|u|_2}{|Q_{d,d/4}|_2}\right)^p\ \right] 
$$
for all $u \in H^1$, and that $E(Q_{d,p})=0$.  In particular it follows that $I_m \ge 0$ for $m \le m_0$, and thence, by Lemma \ref{monotone}, that $I_m = 0$ for $m \le m_0$.  

Now suppose $m > m_0$.  Choose $\lambda > 1$ such that $M(\lambda Q_{d,p})=m$.  Then $E(\lambda Q_{d,p})<0$, showing that $I_m < 0$.  If it were true that $I_m > - \infty$, then by Theorem \ref{precompact} and Lemma \ref{mineqphi}, there would exist $\omega >0$ such that $\varphi_\omega \in G_m$, so that in particular $M(\varphi_\omega)=m$.  But it is easy to check that when $k=0$ and $p=4/d$, one has $M(\varphi_\omega)=M(Q_{d,p})$
for all $\omega > 0$, which is impossible since $m > m_0$.  Therefore we must have that $I_m = -\infty$.

Finally, note that for $\omega > 0$ we have
$$
\varphi_\omega(\mathbf x)= \omega^{d/4}Q_{d,p}(\omega^{1/2}\mathbf x)
$$
by \eqref{defphi}.  From this it follows easily that $M(\varphi_\omega)=M(Q_{d,p})$.
\end{proof}

  \begin{lem}
  Suppose $d \ge 1$, $k=0$, and $p > 4/d$.  Then $I_m=-\infty$ for all $m > 0$.  
 \label{kzerosuper}
\end{lem}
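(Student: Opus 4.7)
The plan is to use an $L^2$-preserving dilation. Since $k=0$, the mass functional reduces to $M(u) = \tfrac12 |u|_2^2$, which has no derivative term, so it is invariant under the scaling
$$u_\theta(\mathbf x) := \theta^{d/2} u(\theta \mathbf x), \qquad \theta > 0.$$
Meanwhile, the two terms of $E(u_\theta)$ scale like different powers of $\theta$, and when $p > 4/d$ the nonlinear term scales with a strictly larger power than the kinetic term. Letting $\theta \to \infty$ will drive $E(u_\theta)$ to $-\infty$ while keeping the constraint $M(u_\theta) = m$ intact.

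Concretely, I would fix any nontrivial $u \in H^1$ with $M(u) = m$ (e.g.\ a suitable scalar multiple of a fixed Schwartz function). A direct change of variables gives
$$M(u_\theta) = \tfrac12 \int |u_\theta|^2\, d\mathbf x = \tfrac12 \int |u|^2\, d\mathbf x = m$$
for every $\theta > 0$, and
$$E(u_\theta) = \frac{\theta^2}{2}\int |\nabla u|^2\, d\mathbf x \;-\; \frac{\theta^{dp/2}}{p+2}\int |u|^{p+2}\, d\mathbf x.$$
Since $u$ is nontrivial, $\int |u|^{p+2}\,d\mathbf x > 0$. The hypothesis $p > 4/d$ gives $dp/2 > 2$, so the second term dominates as $\theta \to \infty$ and $E(u_\theta) \to -\infty$.

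Because each $u_\theta$ is admissible for the minimization problem defining $I_m$, we conclude
$$I_m \le \inf_{\theta > 0} E(u_\theta) = -\infty,$$
i.e.\ $I_m = -\infty$. No serious obstacle is expected: the only point to verify is the scaling identities for $M$ and $E$, which are immediate, and the fact that $p > 4/d$ makes $\theta^{dp/2}$ outgrow $\theta^2$ at infinity.
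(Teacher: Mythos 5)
Your proposal is correct and is essentially identical to the paper's proof: both fix a nontrivial $u$ with $M(u)=m$, apply the $L^2$-invariant dilation $u_\alpha(\mathbf x)=\alpha^{d/2}u(\alpha\mathbf x)$, and let the scaling parameter tend to infinity, using $dp/2>2$ to conclude $E(u_\alpha)\to-\infty$. The scaling identities you record for $M$ and $E$ are exactly those in the paper.
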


\begin{proof}  For any given $m >0$, fix $u \in H^1$ such that $M(u)=m$ and $\int |u|^{p+2}\ d\mathbf x > 0$ (clearly such a $u$ exists).  For $\alpha > 0$, define $u_\alpha(\mathbf x)=\alpha^{d/2} u(\alpha \mathbf x)$.  Then for all $\alpha > 0$ we have $M(u_\alpha)=m$ and
$$
E(u_\alpha)=  \alpha^2 \left(\frac12\int |\nabla u|^2\ d\mathbf x\right)-\alpha^{dp/2}\left(\frac{1}{p+2}\int |u|^{p+2}\ d\mathbf x\right).
$$
Since $dp/2 > 2$, the second term in the expression for $E(u_\alpha)$ will dominate the first term as $\alpha \to \infty$.
Therefore $\displaystyle\lim_{\alpha \to \infty} E(u_\alpha)=-\infty$, and hence $I_m = -\infty$.
\end{proof}  

\begin{lem} Suppose $d=1$ and $k=1$.  For each $p \ge 4$, there exists $m_0 = m_0(p)$ such that $I_m = 0$ for $m \le m_0$ and $-\infty < I_m < 0$ for $m > m_0$. 
\label{d1k1lem}
\end{lem}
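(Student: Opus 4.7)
My plan is to set $m_0 := \sup\{m > 0 : I_m = 0\}$ and show that $m_0 \in (0,\infty)$; together with the monotonicity of $I_m$ from Lemma~\ref{monotone} and one short extra argument at $m = m_0$, this produces the required dichotomy. Preliminarily, Lemma~\ref{Imlezero}(1) gives $I_m > -\infty$ for all $m > 0$ (since $k = d = 1$ and $p_c(1) = \infty$), while Lemma~\ref{monotone} gives $I_m \le 0$ and the non-increase of $m \mapsto I_m$.

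\emph{Step 1 (small $m$).} I will apply the one-dimensional Gagliardo-Nirenberg inequality of Theorem~\ref{GN},
$$
|u|_{p+2}^{p+2} \le C_{1,p}^{p+2}\,|u|_2^{(p+4)/2}\,|u_x|_2^{p/2},
$$
together with $|u|_2^2 \le 2m$ and $|u_x|_2^2 \le 2m/\beta$ (both following from $M(u) = m$), using $p \ge 4$ so that the exponent $(p-4)/2$ on $|u_x|_2$ is non-negative, to obtain
$$
E(u) \ge \tfrac{1}{2}|u_x|_2^2\Bigl(1 - \tfrac{2\,C_{1,p}^{p+2}\,(2m)^{p/2}}{(p+2)\,\beta^{(p-4)/4}}\Bigr).
$$
The bracket is non-negative for $m$ below an explicit threshold $\bar m_0 = \bar m_0(p,\beta) > 0$, so $I_m \ge 0$, and therefore $I_m = 0$ on $(0,\bar m_0]$.

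\emph{Step 2 (large $m$).} For $p > 4$, I fix any $u \in H^1$ with $|u|_{p+2} > 0$ and set $u_\lambda(x) := \lambda u(\lambda^2 x)$. A direct computation gives
$$
E(u_\lambda) = \tfrac{\lambda^4}{2}|u_x|_2^2 - \tfrac{\lambda^p}{p+2}|u|_{p+2}^{p+2},
\qquad
M(u_\lambda) = \tfrac{1}{2}|u|_2^2 + \tfrac{\beta\lambda^4}{2}|u_x|_2^2,
$$
so $E(u_\lambda) \to -\infty$ and $M(u_\lambda) \to \infty$ as $\lambda \to \infty$, yielding $I_m < 0$ for all sufficiently large $m$. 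For $p = 4$ this same scaling only gives $E(u_\lambda) = \lambda^4 E(u)$, so I instead use $u = c\,Q_{1,4}$ with $c > 1$: the identity $E(Q_{1,4}) = 0$ from Lemma~\ref{critpower} rewrites as $|Q_{1,4}|_6^6 = 3|(Q_{1,4})_x|_2^2$, from which $E(cQ_{1,4}) = \tfrac{1}{2}(c^2 - c^6)|(Q_{1,4})_x|_2^2 < 0$, while $M(cQ_{1,4}) = c^2 M(Q_{1,4}) \to \infty$ as $c \to \infty$.

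\emph{Conclusion.} Steps 1 and 2 yield $0 < \bar m_0 \le m_0 < \infty$, and monotonicity immediately gives $I_m = 0$ for $0 < m < m_0$. To see $I_{m_0} = 0$, I argue by contradiction: if some $v$ satisfied $M(v) = m_0$ and $E(v) < 0$, then for $\lambda$ slightly less than $1$ the function $\lambda v$ would satisfy $M(\lambda v) = \lambda^2 m_0 < m_0$ and, by continuity of the polynomial $\lambda \mapsto E(\lambda v)$, also $E(\lambda v) < 0$, contradicting $I_{\lambda^2 m_0} = 0$. For $m > m_0$, the definition of $m_0$ rules out $I_m = 0$, while Lemma~\ref{monotone} gives $I_m \le I_{m_0} = 0$, so $I_m < 0$, and $I_m > -\infty$ by Lemma~\ref{Imlezero}(1). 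The heart of the argument is Step~1: it is the only place where the hypothesis $p \ge 4$ and the regularization $k = d$ are both used essentially, the latter because $M(u)$ must dominate $|u_x|_2$ for the Gagliardo-Nirenberg bound to close in terms of $m$ alone.
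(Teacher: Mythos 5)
Your proof is correct, but it follows a genuinely different route from the paper's. The paper works entirely with the explicit $\sech^{2/p}$ profile: it computes $M(\varphi_\omega)$ and $E(\varphi_\omega)$ in closed form, locates the critical frequencies $\omega_1$ and $\omega_2=(p-4)/4\beta$, and pins down $I_m$ by combining Lemma \ref{monotone} with Theorem \ref{precompact} and Lemma \ref{mineqphi} (if $I_m<0$ then a minimizer exists and must be some $\varphi_\omega$ with $E(\varphi_\omega)=I_m<0$, which the explicit formulas rule out for $m\le m_0$); this yields the explicit threshold $m_0=M(\varphi_{\omega_2})$ (and $\sqrt3\pi/2$ for $p=4$), together with the sign information on $E(\varphi_\omega)$ and $m'(\omega)$ that the paper quotes elsewhere (e.g.\ in the discussion of stable non-ground-state bound states and in the remark after Corollary \ref{Eneg}). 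You instead define $m_0$ abstractly as $\sup\{m:I_m=0\}$ and bound it from below by a Gagliardo--Nirenberg argument --- correctly exploiting that $p\ge4$ makes the leftover exponent $(p-4)/2$ on $|u_x|_2$ nonnegative and that $k=d$ lets $M$ control $|u_x|_2$ --- and from above by scaling (with the $cQ_{1,4}$ trick covering $p=4$); your normalization argument at $m=m_0$ itself is also sound (though $\lambda\mapsto E(\lambda v)$ is merely continuous, not a polynomial, for non-integer $p$ --- continuity is all you use). Your approach is more elementary and self-contained, needing neither the existence/classification machinery nor the explicit integrals, and it would transfer to settings without closed-form solitons; what it gives up is the explicit value of $m_0$ and the auxiliary facts about $\omega_1$, $\omega_2$, $E(\varphi_\omega)$, and $m'(\omega)$ that the paper extracts from this computation and uses later.
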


\begin{proof} When $k=1$ and $d=1$, it follows from part 1 of Lemma \ref{Imlezero} that $I_m > - \infty$ for all $p>0$ and all $m>0$.  We also have for all $\omega > 0$ and all $p>0$  the explicit formula
$$
\varphi_w(x)=\varphi_{\omega,\beta,1,1,p}(x)=\left(\frac{\omega(p+2)}{2}\right)^{1/p}\sech^{2/p}\left(\frac{p\theta x}{2}\right),
$$ 
where $\displaystyle \theta=\sqrt{\frac{\omega}{1+\beta \omega}}$.  A straightforward but tedious computation (see page 27 of \cite{Z} for formulas for the definite integrals involved) shows that
\begin{equation}
\begin{aligned}
M(\varphi_\omega)&=\left(\frac{\omega(p+2)}{2}\right)^{2/p}\frac{2C_p}{p\theta}\left[1+\left(\frac{\beta \omega p}{(1+\beta\omega)(4+p)}\right)\right]   \\
E(\varphi_\omega)&=\left(\frac{\omega(p+2)}{2}\right)^{2/p}\frac{C_p\omega}{p\theta(4+p)}
\left[\frac{p}{1+\beta \omega}-4    \right],
\end{aligned}
\label{MEformkdeq1}
\end{equation} 
where $\displaystyle C_p = \frac{\Gamma(\frac12)\Gamma(\frac{2}{p})}{\Gamma(\frac{2}{p}+\frac12)}$.
Define $m:(0,\infty) \to (0,\infty)$ by $m(\omega)=M(\varphi_\omega)$. 

 When $p=4$, we see from \eqref{MEformkdeq1} that $m'(\omega)> 0$ for all $\omega > 0$, $\displaystyle \lim_{\omega \to 0} m(\omega)=\frac{\sqrt 3 \pi}{2}$, and $\displaystyle \lim_{\omega \to \infty}m(\omega) = \infty$.   Therefore, if we define $m_0 = \frac{\sqrt 3 \pi}{2}$, we see that to each $m > m_0$ there corresponds a unique $\omega \in (0,\infty)$ such that $m=m(\omega)$.  We also see from \eqref{MEformkdeq1} that when $p=4$, we have $E(\varphi_\omega)<0$ for every $\omega > 0$.  It thus follows that $I_m < 0$ for every $m > m_0$.   On the other hand, for $m \le m_0$ we have $I_m \le 0$ by Lemma \ref{monotone}, and if $I_m<0$ then by Theorem \ref{precompact} and Lemma \ref{mineqphi} there exists some $\omega> 0$ such that $\varphi_\omega \in G_m$, and in particular $m(\omega)=m$.  But this contradicts the fact that $m(\omega) > m_0$ for all $\omega > 0$.   Therefore we must have $I_m =0$.
 
 When $p>4$, we find from \eqref{MEformkdeq1} that $m'(\omega)<0$ for $0<\omega < \omega_1$, $m'(\omega_1)=0$, and $m'(\omega)> 0$ for $\omega > \omega_1$, where
 \begin{equation}
 \omega_1 = \frac{(p-4)\sqrt{4+p}}{(4\sqrt{4+p}+\sqrt 8 p)\beta}.
 \label{omega1}
 \end{equation}
Also, $E(\varphi_{\omega_2})=0$, where 
\begin{equation}\omega_2 = \frac{p-4}{4\beta}.
\label{omega2}
\end{equation}
Notice that $\omega_2 > \omega_1$ for all $p>4$.

Define $m_0 = M(\omega_2)$.  By Lemma \ref{monotone}, $I_{m_0} \le 0$.  But if $I_{m_0}<0$, then by Theorem \ref{precompact} and Lemma \ref{mineqphi} there exists some $\omega> 0$ such that $m(\omega)=m_0$ and $E(\varphi_{\omega})=I_{m_0} < 0$.  The equation $m(\omega)=m_0$ has exactly two solutions:  $\omega = \omega_2$ and $\omega = \omega_3$, where $\omega_3 <\omega_1 < \omega_2$.  Since $E(\varphi_{\omega_2})=0$, then $\omega$ cannot equal $\omega_2$.  But from \eqref{MEformkdeq1} we see that because $\omega_3 < \omega_2$, then we must have $E(\varphi_{\omega_3}) > 0$, so $\omega$ cannot equal $\omega_3 $ either.  This shows that $I_{m_0}$ must equal 0, and from Lemma \ref{monotone} it follows that $I_m = 0$ for all $m \in (0, m_0)$ as well.  On the other hand, if $m > m_0$, then  $m=m(\omega)$ for some $\omega > \omega_2$, and from \eqref{MEformkdeq1} we get that $E(\varphi_{\omega})<0$, proving that $I_m < 0$. 
\end{proof}

\begin{lem} Suppose $d \ge 2$, $0 \le k \le d-1$, and $p>4/(d-k)$.  Then $I_m = -\infty$ for all $m>0$.
\label{supercrit1}
\end{lem}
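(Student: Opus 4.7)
The plan is to adapt the scaling argument of Lemma \ref{kzerosuper} to the partially regularized setting. The key observation is that $M(u)$ involves only $|u|^2$ and $|\nabla_y u|^2$, not $|\nabla_x u|^2$, so a scaling that stretches or compresses only the $x$-variables will leave $M$ invariant while still activating the focusing term in $E$.

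First, I would fix $m>0$ and choose any nontrivial $u \in C_c^\infty(\mathbb R^d)$ (then rescale it by a scalar) so that $M(u)=m$ and $\int |u|^{p+2}\,d\mathbf x > 0$. Then I would define the one-parameter family
$$
u_\alpha(\mathbf x) = u_\alpha(x,y) := \alpha^{(d-k)/2} u(\alpha x, y), \qquad \alpha > 0.
$$
A change of variables $x \mapsto \alpha x$ shows that $\int |u_\alpha|^2\,d\mathbf x = \int |u|^2\,d\mathbf x$ and $\int |\nabla_y u_\alpha|^2\,d\mathbf x = \int |\nabla_y u|^2\,d\mathbf x$, so $M(u_\alpha) = M(u) = m$ for every $\alpha>0$.

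Next I would compute the three pieces of $E(u_\alpha)$ separately, obtaining
$$
E(u_\alpha) = \frac{\alpha^2}{2}\int |\nabla_x u|^2\,d\mathbf x + \frac{1}{2}\int |\nabla_y u|^2\,d\mathbf x - \frac{\alpha^{(d-k)p/2}}{p+2}\int |u|^{p+2}\,d\mathbf x.
$$
The hypothesis $p > 4/(d-k)$ is exactly the condition $(d-k)p/2 > 2$, so the negative focusing term dominates as $\alpha \to \infty$ and $E(u_\alpha) \to -\infty$. Since each $u_\alpha$ is admissible in the variational problem defining $I_m$, this yields $I_m = -\infty$, completing the proof.

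There is no real obstacle here: the only thing to be careful about is choosing a scaling that is anisotropic in the right way (stretching in $x$ but not in $y$), so that $M$ is conserved. Once that choice is made, the argument is a direct generalization of the standard concentration argument used in the critical-power analysis of the classical NLS.
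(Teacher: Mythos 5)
Your proof is correct and is essentially identical to the paper's: setting $\tilde\alpha=\alpha^{(d-k)/2}$, your family $u_\alpha(x,y)=\alpha^{(d-k)/2}u(\alpha x,y)$ is exactly the paper's family $\tilde\alpha\, u(\tilde\alpha^{2/(d-k)}x,y)$, and both arguments conclude by noting that $(d-k)p/2>2$ makes the focusing term dominate as the scaling parameter tends to infinity. No issues.
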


\begin{proof}
Let $m > 0$ be given, and fix $u \in H^1$ such that $M(u)=m$.  For each $\alpha > 0$, define
$$
u_\alpha(\mathbf x) = \alpha u(\alpha^{2/(d-k)}x,y).
$$
Then for all $\alpha >0$ we have $M(u_\alpha) = m$ and
$$
E(u_\alpha) = \frac12 \int |\nabla_y u|^2\ d\mathbf x 
 \ +  \ \alpha^{4/(d-k)}\left(\frac12\int |\nabla_x u|^2\ d\mathbf x \right) 
 - \alpha^p \left(\frac{1}{p+2}\int |u|^{p+2}\ d\mathbf x \right).
$$
Since $p > 4/(d-k)$, it follows that $\displaystyle \lim_{\alpha \to \infty} E(u_\alpha) = -\infty$.  Therefore $I_m = -\infty$.
\end{proof}
 
\begin{lem} Suppose $d \ge 3$, $0 \le k \le d$, and $p > 4/(d-2)$.  Then $I_m = - \infty$ for all $m > 0$.
\label{supercrit2}
\end{lem}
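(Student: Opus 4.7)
The plan is to split into two cases, $k=0$ and $k\ge 1$, and in each case exhibit an explicit sequence $\{v_n\}$ with $M(v_n)=m$ but $E(v_n)\to-\infty$.

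For $k=0$, note that $d\ge 3$ implies $4/(d-2)\ge 4 > 4/d$, so the hypothesis $p>4/(d-2)$ forces $p>4/d$, and the result follows immediately from Lemma \ref{kzerosuper}. All the work will be in the case $k\ge 1$.

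For $k\ge 1$, the key idea is to exploit the failure of the Sobolev embedding $H^1\hookrightarrow L^{p+2}$ when $p>4/(d-2)$: one can concentrate a function so that $|u|_{p+2}$ blows up while the full $H^1$ norm stays bounded. Concretely, I would fix any nonzero $\phi\in C_c^\infty(\mathbb R^d)$ (automatically $|\nabla_y\phi|_2>0$ since $k\ge 1$ and $\phi$ has compact support in the $y$-directions), and consider the $H^1$-critical scaling
$$
u_\lambda(\mathbf x):=\lambda^{(d-2)/2}\phi(\lambda\mathbf x),\qquad \lambda\ge 1.
$$
The quick computation shows $|\nabla u_\lambda|_2$ and $|\nabla_y u_\lambda|_2$ are independent of $\lambda$, while $|u_\lambda|_2^2=\lambda^{-2}|\phi|_2^2\to 0$ and $|u_\lambda|_{p+2}^{p+2}=\lambda^{(p(d-2)-4)/2}|\phi|_{p+2}^{p+2}\to\infty$, the last limit being exactly where the hypothesis $p>4/(d-2)$ is used. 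Consequently $M(u_\lambda)\to M_\infty:=\tfrac{\beta}{2}|\nabla_y\phi|_2^2>0$.

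To pin $M$ to the prescribed value $m$, I would then rescale $v_\lambda:=c_\lambda u_\lambda$ with $c_\lambda:=\sqrt{m/M(u_\lambda)}$, which is well-defined for $\lambda$ large and satisfies $c_\lambda\to\sqrt{m/M_\infty}\in(0,\infty)$. Since the gradient-contribution to $E(v_\lambda)$ is $c_\lambda^2\cdot\tfrac12|\nabla\phi|_2^2$ (bounded) while the nonlinear contribution is $\tfrac{c_\lambda^{p+2}}{p+2}|u_\lambda|_{p+2}^{p+2}$ with $c_\lambda^{p+2}$ bounded below and $|u_\lambda|_{p+2}^{p+2}\to\infty$, it follows that $E(v_\lambda)\to-\infty$, giving $I_m=-\infty$.

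The main obstacle, if any, is choosing the correct scaling exponent. One cannot use the $L^2$-critical scaling as in the proof of Lemma \ref{kzerosuper} (it wouldn't control the extra $|\nabla_y u|_2^2$ term in $M$ for $k\ge 1$), nor the $x$-only scaling of Lemma \ref{supercrit1} (which requires $p>4/(d-k)$, a strictly stronger condition when $k>2$). The $H^1$-critical exponent $(d-2)/2$ is precisely what simultaneously keeps both gradient norms invariant (hence $M$ controlled, via the decay of $|u_\lambda|_2$) and triggers blow-up of $|u_\lambda|_{p+2}$ exactly at the Sobolev-critical threshold $p=4/(d-2)$.
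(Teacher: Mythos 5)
Your argument is correct and is essentially the paper's own proof: the paper uses the scaling $u_\alpha(\mathbf x)=\alpha u(\alpha^r\mathbf x)$ with any $r\in[2/(d-2),(p+2)/d)$ followed by a multiplicative renormalization to restore $M=m$, and your $H^1$-critical choice $\lambda^{(d-2)/2}\phi(\lambda\mathbf x)$ is exactly the endpoint $r=2/(d-2)$ of that family (the paper treats all $k$ at once by observing that the renormalization constant is bounded below since the sequence is $H^1$-bounded, so your case split at $k=0$ is unnecessary but harmless). One small slip: for $d\ge 4$ it is false that $4/(d-2)\ge 4$; the chain you want in the $k=0$ case is simply $p>4/(d-2)>4/d$, which holds because $d-2<d$.
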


\begin{proof} Fix $u \in H^1$ such that $\int |u|^{p+2}\ d\mathbf x > 0$, and for each $\alpha > 0$ define
$$
u_\alpha(\mathbf x) = \alpha u(\alpha^r \mathbf x),
$$
where $r$ is any number such that 
$$
\frac{2}{d-2} \le r < \frac{p+2}{d}.
$$
(The existence of a such a number $r$ is guaranteed because $p > 4/(d-2)$.)  Then by taking $\alpha_n$ to be any sequence such that $\displaystyle \lim_{n \to \infty} \alpha_n = \infty$ and setting $u_n = u_{\alpha_n}$, we obtain a sequence $\{u_n\}$ which is bounded in $H^1$ and which satisfies $\displaystyle \lim_{n \to \infty} |u_n|_{p+2} = \infty$.

Fix $m>0$ and define $\tilde u_n = \beta_n u_n$, where $\beta_n = \sqrt{m/M(u_n)}$, so that $M(\tilde u_n)=m$ for all $n \in \mathbb N$.  Since $\{u_n\}$ is bounded in $H^1$, then there exists $\epsilon > 0$ such that for all $n \in \mathbb N$, $\beta_n \ge \epsilon$ and therefore
\begin{equation}
E(\tilde u_n) \le \beta_n^2\left( \frac12\int |\nabla u_n|^2\ d \mathbf x - \frac{\epsilon^p}{p+2}\int|u_n|^{p+2}\ d\mathbf x\right).
\label{boundEabove}
\end{equation} 
For $n$ sufficiently large, the quantity in parentheses in \eqref{boundEabove} is negative, and therefore we have that
\begin{equation}
E(\tilde u_n) \le \epsilon^2\left( \frac12\int |\nabla u_n|^2\ d \mathbf x - \frac{\epsilon^p}{p+2}\int|u_n|^{p+2}\ d\mathbf x\right).
\label{bigandlittle}
\end{equation}
But as $n$ goes to infinity the first integral on the right-hand side of \eqref{bigandlittle} remains bounded, while the second integral goes to infinity.  Therefore $\displaystyle \lim_{n \to \infty} E(\tilde u_n) = -\infty$, and hence $I_m = -\infty$.  \end{proof}

\begin{lem}
For all $d \ge 1$, all $k$ such that $0 \le k \le d$, and all $p>0$,   there exists $m_0 > 0$ such that $I_m < 0$ for all $m < m_0$.  (Note that $I_m$ could be $-\infty$.)  
\label{Mlezero}
\end{lem}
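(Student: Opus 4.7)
My plan is to produce, for each sufficiently small $m>0$, a test function $u\in H^1$ with $M(u)=m$ and $E(u)<0$; this immediately gives $I_m\le E(u)<0$. In any regime where $I_m=-\infty$ (by Lemmas \ref{kzerosuper}, \ref{supercrit1}, or \ref{supercrit2}) the conclusion is automatic, and when $0<p<4/d$ Lemma \ref{Ilezero1} already delivers $I_m<0$ for every $m>0$, so any $m_0>0$ works. I would therefore concentrate on the remaining cases, namely $p\ge 4/d$ together with the constraints that keep $I_m$ finite.

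For these cases I would use a one-parameter scaling. Fix a nontrivial $v\in H^1$ with $\int|v|^{p+2}\,d\mathbf x>0$, and for each $\theta>0$ set $u_\theta(\mathbf x):=\alpha(\theta)\,v(\theta\mathbf x)$, where $\alpha(\theta)>0$ is uniquely determined by the mass constraint $M(u_\theta)=m$. A direct computation gives
\[
\alpha(\theta)^{2}=\frac{2m\,\theta^{d}}{|v|_2^2+\beta\,\theta^{2}\,|\nabla_y v|_2^2}
\]
(with the $\nabla_y$ term dropped when $k=0$). Substituting back into $E$ produces an explicit scalar function $F(\theta;m):=E(u_\theta)$, and the heart of the argument is to show that, for every sufficiently small $m$, there exists $\theta=\theta(m)>0$ for which $F(\theta;m)<0$.

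The analysis reduces to comparing two competing powers of $\theta$: a kinetic term scaling like $m\,\theta^{2}$ (times a $v$-dependent constant) against a nonlinear term scaling like $m^{(p+2)/2}\,\theta^{dp/2}$ (divided by a denominator involving the same factor as above). Sending $\theta\to 0$ lets the nonlinear term win precisely when $dp/2<2$, i.e.\ $p<4/d$, while sending $\theta\to\infty$ lets it win when the kinetic factor arising from the partial regularization shrinks fast enough, essentially when $p>4/(d-k)$ (or $p>4/(d-2)$ when $k=d$). In the intermediate regime, $\theta$ must be optimized explicitly as a function of $m$, and one verifies that the resulting minimum of $F(\cdot;m)$ is strictly negative for all $m<m_0$, where $m_0>0$ depends only on $v$, $p$, $d$, $k$, and $\beta$.

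The main obstacle is precisely this intermediate regime, where the two competing terms balance at a nontrivial critical point $\theta_\ast(m)$ and the sign of $F(\theta_\ast(m);m)$ is delicate. Handling it is where the partial regularization interacts most nontrivially with the exponent $p$: one needs a case analysis based on the signs of $dp-4$ and $p(d-k)-4$, together with a careful choice of the reference function $v$ (for instance, by choosing $v$ so that the ratio $|v|_{p+2}^{p+2}/|\nabla v|_2^{2}$ is sufficiently large, which is possible after rescaling $v(\cdot)\mapsto v(\mu\cdot)$ with $\mu$ small) so that the corresponding threshold $m_0$ comes out strictly positive.
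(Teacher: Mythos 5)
Your proposal sets out to prove the statement as literally printed, namely that $I_m<0$ for all sufficiently \emph{small} $m$. That version of the claim is false, and the place where your argument would have to fail is exactly the ``intermediate regime'' that you flag as the main obstacle and then dispose of with the unproved assertion that ``one verifies that the resulting minimum of $F(\cdot;m)$ is strictly negative for all $m<m_0$.'' When $k\ge 1$ and $4/d\le p<\min\left(4/(d-k),p_c(d)\right)$ (and likewise when $k=0$ and $p=4/d$), one has $I_m=0$ for all small $m$ and $I_m<0$ only for large $m$ (see Lemmas \ref{critpower}, \ref{d1k1lem}, and \ref{Mlezero1}); so no choice of test function, and no optimization over $\theta$, can produce $E(u)<0$ with $M(u)=m$ small in that regime. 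Carrying your scaling computation to completion confirms this: with $M(u_\theta)=m$ held fixed, the nonlinear term can dominate the kinetic term at some $\theta$ only when $m$ \emph{exceeds} a positive threshold. Rescaling $v(\cdot)\mapsto v(\mu\cdot)$ to inflate $|v|_{p+2}^{p+2}/|\nabla v|_2^{2}$ cannot rescue this, since that rescaling is already subsumed in the optimization over $\theta$.

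The underlying issue is a typo in the statement of the lemma: the inequality should read $m>m_0$, as one sees both from the paper's own proof (which concludes that $E(au)<0$, hence $I_m<0$, for $m$ sufficiently \emph{large}) and from the way the lemma is invoked in the proof of Lemma \ref{Mlezero1}, where it is used to show that $\sup\{m>0: I_m=0\}$ is finite. For the corrected statement the argument is far simpler than what you propose: fix any nontrivial $u\in H^1$ and set $a=\sqrt{m/M(u)}$, so that $M(au)=m$; then
$E(au)=a^2\left(\tfrac12\int|\nabla u|^2\,d\mathbf x\right)-a^{p+2}\left(\tfrac{1}{p+2}\int|u|^{p+2}\,d\mathbf x\right)$,
and since $p>0$ the second term dominates as $a\to\infty$, i.e.\ as $m\to\infty$. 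No second scaling parameter and no case analysis on the sign of $dp-4$ or $p(d-k)-4$ is needed.
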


\begin{proof}  Choose any $u \in H^1$ such that $u$ is not identically zero.  For every $m > 0$, if we define
$a = m/\sqrt{M(u)}$, we have that $M(au)=m$.  Then
$$
E(au)= a^2 \left( \frac12 \int |\nabla u|^2\ d\mathbf x \right) - a^{p+2}\left(\frac{1}{p+2} \int |u|^{p+2}\ d\mathbf x \right),
$$
and since $a \to \infty$ as $m \to \infty$, we have that $E(au) < 0$, and hence $I_m < 0$, for $m$ sufficiently large.
\end{proof}

\begin{lem}
Suppose $d \ge 2$ and $1 \le k \le d$.  Assume $\displaystyle \frac{4}{d} \le p <\min\left( \frac{4}{d-k},p_c(d)\right)$ (if $k \ne d$), or $\displaystyle \frac{4}{d} \le p <p_c(d)$ (if $k=d$).  Then there exists $m_0 > 0$ such that
$I_m = 0$ for $0 \le m < m_0$ and $-\infty < I_m < 0$ for $m > m_0$.
\label{Mlezero1}
\end{lem}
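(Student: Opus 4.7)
The plan is to follow the strategy of Lemma \ref{d1k1lem}: analyze the bound-state profiles $\varphi_\omega$ explicitly, and invoke Theorem \ref{precompact} together with Lemma \ref{mineqphi} (every ground state must coincide with some $\varphi_\omega$ up to phase and translation) to preclude the existence of a ground state when $m$ is small. The finiteness $I_m > -\infty$ follows at once from Lemma \ref{Imlezero} (part 1 when $k=d$, part 3 when $1\le k\le d-1$), and $I_m\le 0$ from Lemma \ref{monotone}; all the work lies in locating a threshold $m_0 > 0$. The key ingredients are two closed-form expressions. The formula $m(\omega):=M(\varphi_\omega)=f(\beta\omega)$ with $f(x)=x^a(1+x)^b(A+Bx)$, $a=(4-pd)/(2p)$, $b=(k-2)/2$, is already recorded in Corollary \ref{whenneg}. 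For $E$, using the two Pohozaev identities in the introduction to eliminate $\int|\varphi_\omega|^{p+2}$, and combining $|\nabla_y Q_{d,p}|_2^2=(k/d)|\nabla Q_{d,p}|_2^2$ (from radiality of $Q_{d,p}$) with the Pohozaev identity $|\nabla Q_{d,p}|_2^2=dp\,|Q_{d,p}|_2^2/(4-p(d-2))$, the scaling in Lemma \ref{groundunique} leads to
$$
E(\varphi_\omega) \;=\; \frac{\omega\,|\varphi_\omega|_2^2}{2(4-p(d-2))(1+\beta\omega)}\bigl[(dp-4) + (p(d-k)-4)\beta\omega\bigr].
$$

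I would then split into two cases. When $p=4/d$, the bracket reduces to $-(4k/d)\beta\omega<0$, so $E(\varphi_\omega)<0$ for every $\omega>0$; since $a=0$, a short analysis of $f'$ using the formula from Corollary \ref{whenneg} (together with $B>A$) shows $f$ is strictly increasing on $[0,\infty)$ with $f(0)=A$ and $f(\infty)=\infty$, so the image of $m$ is $(A,\infty)$; set $m_0:=A$. When $p>4/d$, the bracket is a decreasing linear function of $\omega$ with positive value at $\omega=0$, vanishing at the unique $\omega_E=(dp-4)/[(4-p(d-k))\beta]$, positive for $\omega<\omega_E$ and negative for $\omega>\omega_E$; meanwhile $a<0$ forces $f\to\infty$ at both endpoints, and Corollary \ref{whenneg} identifies a unique interior minimizer $\omega_M$ of $m(\omega)$. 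The standard identity $\tfrac{d}{d\omega}E(\varphi_\omega)=-\omega\,m'(\omega)$ (obtained by differentiating $S_\omega(\varphi_\omega)$ in $\omega$ and using $S_\omega'(\varphi_\omega)=0$) forces $E(\varphi_\omega)$ to be maximized exactly at $\omega_M$, and since $E(\varphi_\omega)>0$ for $\omega$ near $0$, this maximum is strictly positive, which yields the crucial inequality $\omega_M<\omega_E$; set $m_0:=m(\omega_E)$.

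To close the argument: for $0<m<m_0$, if $I_m<0$ then Theorem \ref{precompact} and Lemma \ref{mineqphi} produce some $\omega>0$ with $m(\omega)=m$ and $E(\varphi_\omega)=I_m<0$. In case A this is impossible because $m<A$ is outside the image of $m(\cdot)$; in case B every solution of $m(\omega)=m<m(\omega_E)$ lies in $(0,\omega_E)$, where $E(\varphi_\omega)>0$. Either way a contradiction follows, so $I_m=0$. For $m>m_0$, I pick an $\omega$ with $m(\omega)=m$ lying in the negative-$E$ region: in case A, any $\omega$ with $m(\omega)=m$ works (such an $\omega$ exists since $m>A$); in case B, continuity of $m$ together with $m(\omega)\to\infty$ as $\omega\to\infty$ produces some $\omega>\omega_E$ with $m(\omega)=m$; in either case $I_m\le E(\varphi_\omega)<0$. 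The main technical obstacle is case B, and specifically verifying the strict inequality $\omega_M<\omega_E$; the remainder of the argument reduces to Pohozaev algebra, monotonicity of $I_m$, and the contrapositive of Theorem \ref{precompact}.
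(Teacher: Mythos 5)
Your proposal is correct, but it takes a genuinely different (and more computational) route than the paper. The paper's proof never touches $E(\varphi_\omega)$: it only uses the explicit formula for $M(\varphi_\omega)$ to show that when $p\ge 4/d$ this function tends to $+\infty$ at both ends of $(0,\infty)$ (or to a finite positive limit at $0$ when $p=4/d$) and hence attains a positive minimum $m_1$; since Theorem \ref{precompact} and Lemma \ref{mineqphi} would force any $m$ with $-\infty<I_m<0$ to lie in the range of $M(\varphi_\omega)$, it follows that $I_m=0$ for $m<m_1$. The threshold is then defined abstractly as $m_0=\sup\{m:I_m=0\}$, with finiteness supplied by Lemma \ref{Mlezero} (whose proof shows $I_m<0$ for $m$ large) and the dichotomy by the monotonicity of $I_m$ from Lemma \ref{monotone}. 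You instead generalize the explicit computation of Lemma \ref{d1k1lem} to all $d,k$: your Pohozaev/radiality formula for $E(\varphi_\omega)$ is correct (it reduces to \eqref{MEformkdeq1} when $d=k=1$), the identity $\frac{d}{d\omega}E(\varphi_\omega)=-\omega m'(\omega)$ is legitimate here because $\varphi_\omega$ depends smoothly on $\omega$ through the explicit scaling \eqref{defphi}, and the resulting inequality $\omega_M<\omega_E$ does make the exclusion argument for $m<m(\omega_E)$ airtight. What your approach buys is an explicit identification of the threshold, $m_0=m(\omega_E)$ with $\omega_E=(dp-4)/[(4-p(d-k))\beta]$ (and $m_0=\lim_{\omega\to0^+}M(\varphi_\omega)=\tfrac12|Q_{d,p}|_2^2$ when $p=4/d$), together with the sign of $E(\varphi_\omega)$ on each side — information the paper only partially recovers via Corollary \ref{Eneg} and the remark following it. The cost is the extra Pohozaev algebra and the case analysis, which the paper's softer supremum argument avoids entirely. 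One small point in your favor: you correctly route the finiteness $I_m>-\infty$ through part 1 of Lemma \ref{Imlezero} when $k=d$ and part 3 when $1\le k\le d-1$, whereas the paper's text cites only part 3.
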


\begin{proof}
From \eqref{defphi} and \eqref{defM} we see that
\begin{equation}
\begin{aligned}
M(\varphi_\omega)&= \omega^{(4-dp)/(2p)}(1+\beta\omega)^{k/2} \left(\frac12\int R^2(\mathbf x)\ d\mathbf x\right)\\
&+ \omega^{(4-dp)/(2p)+1}(1+\beta\omega)^{(k/2)-1} \left(\frac{\beta}{2}\int (\nabla_y R)^2(\mathbf x)\ d\mathbf x      \right).
\end{aligned}
\label{Mphi}
\end{equation}

First we observe that when $p \ge 4/d$, $M(\varphi_\omega)$ takes a positive minimum value for $\omega \in (0,\infty)$.  Indeed, if $p>4/d$,  then in the first term on the right-hand side of \eqref{Mphi}, the exponent of $\omega$ is negative, and hence
$\displaystyle \lim_{\omega \to 0^+}M(\varphi_\omega) = \infty$.  On the other hand, $\displaystyle \lim_{\omega \to \infty} M(\varphi_\omega)= \infty$, because both terms on the right-hand side of \eqref{Mphi} are of order $\omega^{(4-(d-k)p)/(2p)}$ as $\omega \to \infty$, and $4 - (d-k)p >0$.  Since $M(\varphi_\omega)$ is a continuous function of $\omega$ on the interval $(0,\infty)$, it must therefore take a positive minimum value $m_1$ on this interval.   Similarly, if $p = 4/d$, then from \eqref{Mphi} we see that $\displaystyle \lim_{\omega \to 0^+} M(\varphi_\omega)$ is a finite positive number, and   $\displaystyle \lim_{\omega \to \infty} M(\varphi_\omega)= \infty$; again we can conclude that $M(\varphi_\omega)$ takes a positive minimum value $m_1$ on $(0,\infty)$.

We claim that for $0 < m < m_1$, $I_m = 0$.  If not, then by Lemma \ref{monotone}, there exists some $m \in (0,m_1)$ such that  $I_m < 0$.  By part 3 of Lemma \ref{Imlezero}, we have that $I_m > -\infty$ as well.  Therefore it follows from  Lemma \ref{mineqphi} and Theorem \ref{precompact} there exists $\omega > 0$ such that  $M(\varphi_\omega)=m$.   But this contradicts the definition of $m_1$. 

Now define $m_0 = \sup\ \{m > 0: I_m = 0\}$.  Since $m_0 \ge m_2$, we know that $m_0 > 0$, and from Lemmas \ref{monotone} and \ref{Mlezero} we obtain that $m_0$ is finite, with $I_m = 0$ for all $m < m_0$ and $I_m < 0$ for all $m > m_0$.  By part 3 of Lemma \ref{Imlezero}, $I_m > -\infty$ for all $m > m_0$.
\end{proof} 

 \begin{thm}
Suppose $d \ge 1$ and $k=0$.
\begin{enumerate}

\item 
If $0 < p < 4/d$, then $-\infty < I_m < 0$ for all $m>0$.  More specifically, we have $-\infty < I_1 < 0$ and \begin{equation}
I_m = I_1 m^{(4+(2-d)p)/(4-dp)}
\label{Imscale}
\end{equation}
 for all $m>0$.  Hence $G_m$ is stable for all $m>0$.

\item 
If $p=4/d$, then $I_m = 0$ for $m \le m_0$ and $I_m = -\infty$ for $m > m_0$, where $m_0=M(Q_{d,p})$, with $Q_{d,p}$ defined as in Lemma \ref{Rdpdef}.  We have $m_0=M(\varphi_\omega)$ for all $\omega > 0$.  

\item
If $p > 4/d$, then $I_m = -\infty$ for all $m>0$.

\end{enumerate}
\label{summaryk0}
\end{thm}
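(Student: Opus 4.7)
The theorem treats the standard NLS case $k=0$ in three regimes: subcritical ($0<p<4/d$), critical ($p=4/d$), and supercritical ($p>4/d$). Parts 2 and 3 are essentially immediate consequences of lemmas already proved in this section: part 2 is just a restatement of Lemma \ref{critpower}, and part 3 follows directly from Lemma \ref{kzerosuper}. So the real content is part 1, which I would split into two steps: first establishing the bounds $-\infty<I_m<0$ and stability, then deriving the explicit scaling identity \eqref{Imscale}.

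For the first step, the upper bound $I_m<0$ for all $m>0$ follows from Lemma \ref{Ilezero1} (applied with $k=0$), and the lower bound $I_m>-\infty$ follows from part 2 of Lemma \ref{Imlezero}. In particular $-\infty<I_1<0$. Once both are in hand, Theorem \ref{precompact} immediately gives stability of $G_m$ for every $m>0$.

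The main calculation is the scaling formula. The plan is to exploit a two-parameter dilation $u_{\lambda,\mu}(\mathbf x)=\lambda v(\mu\mathbf x)$. Since $k=0$, a direct computation gives
\begin{equation*}
M(u_{\lambda,\mu})=\lambda^2\mu^{-d}M(v),\qquad E(u_{\lambda,\mu})=\tfrac{\lambda^2\mu^{2-d}}{2}|\nabla v|_2^2-\tfrac{\lambda^{p+2}\mu^{-d}}{p+2}|v|_{p+2}^{p+2}.
\end{equation*}
I would choose $\mu$ so that the two terms in $E$ scale by the same factor, namely $\mu=\lambda^{p/2}$, and then choose $\lambda$ so that $M$ scales by $m$: the identity $\lambda^2\mu^{-d}=\lambda^{2-pd/2}$ forces $\lambda=m^{2/(4-pd)}$. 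This choice is legitimate precisely because $4-pd>0$ in the subcritical regime. A short bookkeeping check then shows both kinetic and potential terms in $E(u_{\lambda,\mu})$ carry the common factor $m^{(4+(2-d)p)/(4-pd)}$, so $E(u_{\lambda,\mu})=m^{(4+(2-d)p)/(4-pd)}E(v)$ whenever $M(v)=1$, giving $I_m\le m^{(4+(2-d)p)/(4-pd)}I_1$. Running the same argument in reverse (scaling a near-minimizer of $I_m$ down to $M=1$) yields the opposite inequality and hence equality in \eqref{Imscale}.

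There is no real obstacle here beyond arithmetic: all the hard analysis lives in the supporting lemmas, and the only thing to verify carefully is that the chosen $\lambda,\mu$ produce genuine functions in $H^1$ with the prescribed value of $M$ and that the two exponents $\lambda^2\mu^{2-d}$ and $\lambda^{p+2}\mu^{-d}$ really do agree after the substitution $\mu=\lambda^{p/2}$, $\lambda=m^{2/(4-pd)}$. The mild subtlety worth flagging is the sign of the exponent $4-pd$, which must be positive for the scaling $\lambda=m^{2/(4-pd)}$ to make sense; this is exactly the subcritical hypothesis $p<4/d$, and it is also why the very same scaling breaks down in parts 2 and 3, consistent with the dichotomy between $I_m=0$ and $I_m=-\infty$ in those regimes.
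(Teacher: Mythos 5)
Your proposal is correct and follows essentially the same route as the paper: parts 2 and 3 are delegated to Lemmas \ref{critpower} and \ref{kzerosuper}, part 1's bounds come from Lemmas \ref{Imlezero} and \ref{Ilezero1} with stability from Theorem \ref{precompact}, and your scaling $\lambda = m^{2/(4-pd)}$, $\mu = \lambda^{p/2}$ is exactly the paper's substitution $\tilde u(\mathbf x)=m^{2/(4-dp)}u(m^{p/(4-dp)}\mathbf x)$. Your explicit remark that the reverse inequality is obtained by inverting the scaling is a point the paper leaves implicit, but it is the same argument.
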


\begin{proof}

When $0 < p < 4/d$, it follows from Lemmas \ref{Imlezero} and \ref{Ilezero1} that $-\infty < I_m < 0$ for all $m>0$.  Equation \eqref{Imscale} follows from an easy scaling argument:  fix any $m>0$, and for each $u \in H^1$ such that $M(u)=1$, define $\tilde u(\mathbf x)=m^{2/(4-dp)}u(m^{p/(4-dp)} \mathbf x)$. Then $M(\tilde u)=m$ and $E(\tilde u)=m^{(4+(2-d)p)/(4-dp)} E(u)$.   Taking the infimum over all $u$ such that $M(u)=1$ gives \eqref{Imscale}.  The stability of $G_m$ follows from Theorem \ref{precompact}.

The cases $p=4/d$ and $p>4/d$ are handled in Lemmas \ref{critpower} and \ref{kzerosuper}. \end{proof}

\begin{thm}
Suppose $d \ge 1$ and $1 \le k \le d$.

\begin{enumerate}

\item If $\displaystyle 0 < p < 4/d$, then $-\infty < I_m < 0$ for all $m>0$.  Hence $G_m$ is stable for all $m>0$.

\item 
Suppose $\displaystyle \frac{4}{d} \le p <\min\left( \frac{4}{d-k},p_c(d)\right)$ (if $k \ne d$), or $\displaystyle \frac{4}{d} \le p <p_c(d)$ (if $k=d$). Then there exists $m_0 = m_0(d,k,p) > 0$ such that
$I_m = 0$ for $0 \le m < m_0$ and $-\infty < I_m < 0$ for $m > m_0$.  Hence $G_m$ is stable for all $m > m_0$.

\item
Suppose $\displaystyle p >  \min\left(\frac{4}{d-k}, p_c(d)\right)$ (if $k \ne d$) or $p > p_c(d)$ (if $k=d$).  Then $I_m = -\infty$ for all $m>0$.
\end{enumerate}

\label{summaryk1}

\end{thm}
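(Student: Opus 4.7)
The proof will be almost entirely assembly: with the groundwork done in Lemmas \ref{Imlezero} through \ref{Mlezero1}, each of the three parts reduces to selecting the right combination of those results and then invoking Theorem \ref{precompact} for the stability conclusion. So the plan is to handle each part in turn, paying attention to the borderline case $k=d$ and to the exceptional low-dimensional case $d=1$, $k=1$ in part 2.

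For part 1, when $0 < p < 4/d$ I would get the lower bound $I_m > -\infty$ from Lemma \ref{Imlezero}: part 2 covers all $k$, but for $k=d$ one can alternatively use part 1 of the same lemma. The strict upper bound $I_m < 0$ then comes directly from Lemma \ref{Ilezero1}. With $-\infty < I_m < 0$ in hand, Theorem \ref{precompact} yields nonemptiness and stability of $G_m$ for every $m > 0$.

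For part 2, when $d \ge 2$ the statement is precisely Lemma \ref{Mlezero1}, whose proof gives $m_0 > 0$ with $I_m = 0$ on $(0,m_0)$ and $-\infty < I_m < 0$ for $m > m_0$; here again I would note that in the $k=d$ subcase one invokes part 1 rather than part 3 of Lemma \ref{Imlezero} to secure $I_m > -\infty$, but the argument is otherwise unchanged. The remaining low-dimensional case $d=1$ and $k=1$ (necessarily $p \ge 4$) is covered directly by Lemma \ref{d1k1lem}. In both cases stability of $G_m$ for $m > m_0$ follows from Theorem \ref{precompact}.

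For part 3, the hypothesis $p > \min(4/(d-k),p_c(d))$ (or $p > p_c(d)$ if $k=d$) forces either $p > 4/(d-k)$ (which requires $k \le d-1$) or $p > p_c(d) = 4/(d-2)$ (which requires $d \ge 3$). In the first subcase, Lemma \ref{supercrit1} gives $I_m = -\infty$; in the second, Lemma \ref{supercrit2} does. The $k=d$ case necessarily falls in the second subcase. The only mildly delicate step is checking that these two subcases exhaust the hypothesis, which is straightforward from $p_c(d) = 4/(d-2)$ for $d \ge 3$ and the fact that $p_c(d) = \infty$ for $d \le 2$ forces $p > 4/(d-k)$. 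There is no real obstacle here; the work has already been done in the preceding lemmas, and the proof amounts to a bookkeeping argument that partitions the $(p,k,d)$ parameter space into the ranges handled by each lemma.
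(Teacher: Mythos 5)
Your proposal is correct and follows essentially the same route as the paper: part 1 from Lemmas \ref{Imlezero} and \ref{Ilezero1}, part 2 from Lemma \ref{d1k1lem} (for $d=1$) and Lemma \ref{Mlezero1} (for $d \ge 2$), part 3 from Lemmas \ref{supercrit1} and \ref{supercrit2}, with stability via Theorem \ref{precompact}. The extra bookkeeping you do on the $k=d$ subcases and on why the two supercritical lemmas exhaust the hypothesis of part 3 is accurate but not substantively different from the paper's argument.
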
 

\begin{proof}
Part 1 holds by Lemmas \ref{Imlezero} and \ref{Ilezero1}.  Part 2 holds for $d=1$ by Lemma \ref{d1k1lem}, and for $d \ge 2$ by Lemma \ref{Mlezero1}. Part 3 follows from Lemmas \ref{supercrit1} and \ref{supercrit2}.  The assertions of the stability of $G_m$ follow from Theorem \ref{precompact}.
\end{proof} 

{\it Remarks.} Theorem \ref{summaryk1} does not cover the case when $p= \min\left(\frac{4}{d-k}, p_c(d)\right)$.  We expect that a result analogous to Lemma \ref{critpower} holds for all $d \ge 2$ when $k=1$ and $p=4/(d-1)$.  We do not pursue this topic here, however.

 \section{Stable bound states}
 \label{sec:GSS stability} 

This section is devoted to the proof of Theorem \ref{GSSstabilitythm}.  Our exposition follows the lines of the  proof given in \cite{LC}  for the corresponding result for the NLS equation, adapting the argument there as necessary for our situation.

 In what follows, we assume that $k \ge 0$,  $\beta > 0$, 
 and $\omega > 0$ have been fixed.  We will drop the subscript $\omega$ from our notation for the bound state $\varphi_\omega$  and the action functional $S_\omega$, referring to them simply as $\varphi$ and $S$. 
 
For each $\epsilon > 0$, define
\begin{equation*}
U_\epsilon = \left\{v \in H^1: \inf_{\theta \in \mathbb R, \mathbf{y} \in \mathbb R^d} \|e^{i\theta} v(\cdot - \mathbf{y})-\varphi\|_1\ < \epsilon\right\}.
 \end{equation*}
Recall that the real inner product $(\cdot,\cdot)_2$ on $L^2$ was defined in \eqref{definner}. 
 
\begin{lem}
There exist $\epsilon > 0$ and two functions $\sigma: U_\epsilon \to \mathbb R$ and $\mathbf{Y}: U_\epsilon \to \mathbb R^d$ such that for all $v \in U_\epsilon$,
\begin{equation}
|e^{i\sigma(v)} v(\cdot - \mathbf{Y}(v)) - \varphi|_2=\inf_{\theta \in \mathbb R, \mathbf{y} \in \mathbb R^d} \left|e^{i\theta} v(\cdot - \mathbf{y})-\varphi\right|_2.
\label{thetavmin}
\end{equation}
Furthermore, the function $w = e^{i\sigma(v)} v(\cdot - \mathbf{Y}(v))$ satisfies
\begin{equation}
(w, i\varphi)_2 = 0
\label{orth1}
\end{equation}
and
\begin{equation}
\left( w, \frac{\partial \varphi}{\partial x_j}\right)_2= 0 \quad \text{for all $j \in \{1,\dots, d\}$ }.
\label{orth2}
\end{equation}
\label{lemthetavmin}
\end{lem}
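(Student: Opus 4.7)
The plan is to apply the Implicit Function Theorem to the smooth real-valued function
$$G(v,\theta,\mathbf y) := \tfrac12\bigl|e^{i\theta}v(\cdot - \mathbf y) - \varphi\bigr|_2^2$$
on $H^1 \times \mathbb R \times \mathbb R^d$, producing $C^1$ maps $\sigma, \mathbf Y$ that solve the critical-point equations for $G(v,\cdot,\cdot)$ when $v$ lies in a small $H^1$-ball around $\varphi$, and then to extend the construction to all of $U_\epsilon$ by exploiting the phase and translation invariance of the problem.

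First I would compute the first derivatives of $G$ in $(\theta,\mathbf y)$. Using that $|w|_2$ is constant when $w = e^{i\theta}v(\cdot - \mathbf y)$, a direct calculation yields
$$\partial_\theta G = (w, i\varphi)_2, \qquad \partial_{y_j}G = -(w, \varphi_{x_j})_2,$$
so the orthogonality conditions \eqref{orth1}--\eqref{orth2} are precisely the stationarity conditions, and come for free once a critical point is found. Evaluating at the base point $(v,\theta,\mathbf y)=(\varphi,0,\mathbf 0)$ gives zero, and the Hessian in $(\theta,\mathbf y)$ there is diagonal with entries $|\varphi|_2^2, |\varphi_{x_1}|_2^2, \dots, |\varphi_{x_d}|_2^2$: the $(\theta,y_j)$ entries vanish because $\int \varphi\,\varphi_{x_j}\,d\mathbf x = 0$, while the $(y_l,y_j)$ entries with $l\ne j$ vanish by the reflectional symmetry of $\varphi = \varphi_\omega$ in each coordinate direction, which it inherits via \eqref{defphi} from the radial symmetry of $Q_{d,p}$. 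All diagonal entries are strictly positive, so the Hessian is invertible and positive definite; the IFT therefore furnishes an $H^1$-neighborhood $V$ of $\varphi$ and $C^1$ maps $\sigma, \mathbf Y$ on $V$ such that $(\sigma(v),\mathbf Y(v))$ is the unique critical point of $G(v,\cdot,\cdot)$ near the origin, and is a strict local minimum.

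To upgrade this local minimum to the global infimum in \eqref{thetavmin}, I would write $G(v,\theta,\mathbf y) = \tfrac12|v|_2^2 + \tfrac12|\varphi|_2^2 - (e^{i\theta}v(\cdot - \mathbf y),\varphi)_2$ and observe that the inner-product term tends to $0$ as $|\mathbf y|\to\infty$ uniformly in $\theta$, by the $L^2$-decay of $\varphi$. Hence the infimum is attained on a compact subset of $[0,2\pi)\times\mathbb R^d$, and after shrinking $\epsilon$ the value of $G$ at the IFT critical point is strictly smaller than its value outside a small neighborhood of the origin, forcing the global minimum to coincide with the IFT critical point. For arbitrary $v \in U_\epsilon$ one picks $(\theta_0,\mathbf y_0)$ with $e^{i\theta_0}v(\cdot - \mathbf y_0)\in V$ and sets $\sigma(v) := \theta_0 + \sigma(e^{i\theta_0}v(\cdot - \mathbf y_0))$ and $\mathbf Y(v) := \mathbf y_0 + \mathbf Y(e^{i\theta_0}v(\cdot - \mathbf y_0))$; invariance of $G$ under the $\mathbb R\times\mathbb R^d$ action makes this independent of the choice of $(\theta_0,\mathbf y_0)$ when $\epsilon$ is sufficiently small.

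The main obstacle I anticipate is twofold: verifying Hessian non-degeneracy in the presence of the partial rescaling defining $\varphi_\omega$ (especially checking that the mixed $(y_l,y_j)$ partials vanish for $l\ne j$), and matching the IFT's local uniqueness with the global minimization statement, which rests on the quantitative decay of $(e^{i\theta}v(\cdot - \mathbf y),\varphi)_2$ as $|\mathbf y|\to\infty$ together with a continuity argument in $v$.
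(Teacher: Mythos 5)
Your proposal is correct and follows essentially the same route as the paper's proof (which in turn follows Lemma 6.2 of \cite{LC}): existence of a global minimizer via the decay of $(e^{i\theta}v(\cdot-\mathbf y),\varphi)_2$ as $|\mathbf y|\to\infty$, the Implicit Function Theorem applied to the stationarity equations whose vanishing is exactly \eqref{orth1}--\eqref{orth2}, and identification of the local IFT critical point with the global minimizer for small $\epsilon$. The only difference is that you work out explicitly (the Hessian diagonalization, the localization of the global minimizer) some details the paper delegates to Lemmas 6.1--6.2 of \cite{LC}, and your computations there are correct.
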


\begin{proof} This is proved in Lemma 6.2 of \cite{LC}; for the reader's convenience we sketch the proof here.  Define  $\varphi(\theta,\mathbf{y},v)=|e^{i\theta} v(\cdot - \mathbf{y})-\varphi|_2$, and note that for sufficiently small $\epsilon > 0$, if $v \in U_\epsilon$ then there exists $(\tilde \theta, \tilde{\mathbf{y}}) \in \mathbb R \times \mathbb R^d$ such that 
$$
\varphi(\tilde \theta,\tilde {\mathbf{y}}, v)=\inf_{(\theta,{\mathbf{y}}) \in \mathbb R \times \mathbb R^d} \varphi(\theta,\mathbf{y},v).$$
 Indeed, suppose $v \in U_\epsilon$, and let $(\theta_n,\mathbf{y}_n)$ be such that 
 $\lim_{n \to \infty} (\theta_n,\mathbf{y}_n,v) = \inf_{(\theta,\mathbf{y}) \in \mathbb R \times \mathbb R^d} \varphi(\theta,\mathbf{y},v)$. If $\epsilon$ is sufficiently small, then $\varphi(\theta,\mathbf{y},v)> \epsilon$ for $|\mathbf{y}|$ sufficiently large; and therefore the sequence $\left\{(\theta_n,\mathbf{y}_n)\right\}$ must be bounded in $\mathbb R \times \mathbb R^d$.  Hence some subsequence of $\left\{(\theta_n,\mathbf{y}_n)\right\}$ converges to a minimizer $(\tilde\theta,\tilde {\mathbf{y}})$ for $\varphi(\theta,\mathbf{y},v)$, and we must have $F(\tilde \theta,\tilde{\mathbf{ y}},v)=0$, where $F(\theta,\mathbf{y},v)$ is the derivative of $\varphi(\theta,\mathbf{y},v)$ with respect to $(\theta,\mathbf{y})$.  
 
By the Implicit Function Theorem, there exist $\delta >0$ and $\epsilon > 0$ such that for each $v$ satisfying $\|v-\varphi\|_1 < \epsilon$, the equation $F(\theta,\mathbf{y},v)=0$ has a unique solution $(\theta_0,\mathbf{y}_0)$ satisfying $|(\theta_0,\mathbf{y}_0)|< \delta$.  From the equation $F(\theta_0,\mathbf{y}_0,v)=0$ it follows that \eqref{orth1} and \eqref{orth2} hold if we set $\sigma(v)=\theta_0$ and $\mathbf{Y}(v)=\mathbf{y}_0$. But according to Lemma 6.1 of \cite{LC}, by taking $\epsilon$ smaller if necessary we can guarantee that $(\tilde \theta,\tilde{\mathbf{y}}) < \delta$.  Therefore by uniqueness we must have $(\tilde \theta,\tilde{\mathbf y})=(\theta_0,\mathbf{y}_0)$, and so \eqref{thetavmin} holds as well.
\end{proof} 

Our next goal is  to analyze the properties of the second variation $S''(\varphi)$ of $S$ at $\varphi$.  In general, for $u \in H^1$, we define $S''(u):H^1 \to H^{-1}$ as the operator such that
\begin{equation}
S(w)-S(u)=\langle S'(u), w -u\rangle + \frac12 \langle S''(u)(w-u), w-u \rangle + o(\|w-u\|_1^2)
\label{Ssecond}
\end{equation}
as $\| w - u \|_1 \to 0$, where $\langle \cdot, \cdot \rangle$ denotes the pairing between $H^1$ and $H^{-1}$ defined in \eqref{defbrackets}.  

A computation shows that if $w \in H^1$ is written as $w = u + iv$, where $u \in H^1_{\mathbb R}$ and $v \in H^1_{\mathbb R}$ are the real and imaginary parts of $w$, then
\begin{equation}
\langle S''(\varphi)w,w \rangle = \langle L_1 u, u \rangle + \langle L_2 v, v \rangle,
\label{SL1L2}
\end{equation}
where $L_1$ and $L_2$ are defined as operators from $H^1_{\mathbb R}$ to $H^{-1}_{\mathbb R}$ by
\begin{equation*}
\begin{aligned}
L_1 u &=-\Delta u + \omega P_\beta u -(p+1)\varphi^p u \\
L_2 v &=-\Delta v + \omega P_\beta v - \varphi^p v.     
\end{aligned}
\end{equation*}
For $u, v \in H^2_{\mathbb R}$, we have that $L_1 u$ and $L_2 v$ are in $L^2(\mathbb R)$.  Let $\tilde L_1$ and $\tilde L_2$ be the restrictions of $L_1$ and $L_2$ to $H^2_{\mathbb R}$; then we can consider $\tilde L_1$ and $\tilde L_2$ to be unbounded operators on $L^2_{\mathbb R}$ with domain $H^2_{\mathbb R}$.  One then verifies that $\tilde L_1$ and $\tilde L_2$ are self-adjoint operators on $L^2_{\mathbb R}$ with respect to the inner product $(\cdot,\cdot)_2$.

For $f \in L^2_{\mathbb R}(\mathbb R^d)$, define $\Lambda[f] \in L^2_{\mathbb R} \to L^2_{\mathbb R}$ by setting, for all $\mathbf x =(x,y) \in \mathbb R^d$,
$$
\Lambda[ f](\mathbf x)=f(x,\sqrt{1+\beta \omega}\ y).
$$
If we let $\varphi^0 = \Lambda \varphi$, and define operators $\tilde L_1^0$ and $\tilde L_2^0$ on $L^2_{\mathbb R}$ by
\begin{equation}
\begin{aligned}
\tilde L_1^0 u &= \Lambda \left[\tilde L_1 (\Lambda^{-1}[u])\right] \\
\tilde L_2^0 v &= \Lambda \left[\tilde L_2 (\Lambda^{-1}[v])\right],
\label{defzeros}
 \end{aligned}
 \end{equation}
 then we have
 \begin{equation*}
 \begin{aligned}
\tilde L_1^0 u & = -\Delta u + \omega u - (p+1)(\varphi^0)^p u\\
\tilde L_2^0 v & = -\Delta v + \omega v - (\varphi^0)^p v.
 \end{aligned} 
 \end{equation*}
 The operators $\tilde L_1^0$ and $\tilde L_2^0$ are exactly the operators discussed in Lemmas 4.12 through 4.19 of \cite{LC}.

From Lemmas 4.16 and 4.17 of \cite{LC}, we have that $\tilde L_1^0$ has only one negative eigenvalue $-\lambda_1$, with a one-dimensional eigenspace; and that $0$ is an isolated eigenvalue of $\tilde L_1^0$, with eigenspace spanned by the functions $\left\{\frac{\partial \varphi^0}{\partial x_j}: j = 1,\dots, d \right\}$.  From \eqref{defzeros} it follows that $-\lambda_1$ is also a simple eigenvalue for the operator $\tilde L_1$, and is the only negative eigenvalue, and that $0$ is an isolated eigenvalue of $\tilde L_1$ with eigenspace spanned by the functions $\left\{\frac{\partial \varphi}{\partial x_j}: j = 1,\dots, d \right\}$.  Let $e_1$ be an eigenvector for $\tilde L_1$ for the eigenvalue $\lambda_1$, normalized so that $(e_1,e_1)_2 = 1$. Then we can decompose $H^1_{\mathbb R}$ as 
\begin{equation}
H^1_{\mathbb R} = E_- \oplus Z \oplus E_+,
\label{decomp}
\end{equation}
 where $E_-$ is the one-dimensional subspace spanned by $e_1$, $Z$ is the kernel of $\tilde L_1$, and $E_+$ is the image of the spectral projection corresponding to the positive part of the spectrum of $\tilde L_1$.  The decomposition is orthogonal with respect to the inner product $(\cdot,\cdot)_2$, and $\langle \tilde L_1 f, g \rangle$ defines a positive definite quadratic form on $E_+$, satisfying the Schwarz inequality 
$$
\langle \tilde L_1 f, g \rangle \le \langle \tilde L_1 f, f \rangle \langle \tilde L_1 g, g \rangle
$$
for all $f, g \in E_+$.  

\begin{lem}  
Suppose $\omega > 0$ is such that $m'(\omega) > 0$, and let $\varphi = \varphi_\omega$ be as defined in Theorem \ref{GSSstabilitythm}.  
Suppose $u \in H^1_{\mathbb R}$ and $u$ is not identically zero on $\mathbf R^d$.   If
\begin{equation}
(u, P_\beta \varphi)_2 = 0
\label{uorth1}
\end{equation}
and
\begin{equation}
\left(u, \frac{\partial \varphi}{\partial x_j}\right)_2=0  \quad (j = 1,\dots, d),
\label{uorth2}
\end{equation}
then 
\begin{equation}
\left\langle \tilde L_1 u, u \right\rangle > 0.
\label{Lupos}
\end{equation}
\label{lemLupos}
\end{lem}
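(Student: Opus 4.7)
The plan is to exploit the spectral decomposition \eqref{decomp} of $H^1_{\mathbb R}$ under $\tilde L_1$, writing $u = a e_1 + z + w$ with $z \in Z$ and $w \in E_+$, and to control the negative-spectral coefficient $a$ using the orthogonality constraints together with the sign of $m'(\omega)$. The hypothesis \eqref{uorth2} immediately forces $z = 0$: since $Z = \operatorname{span}\{\partial\varphi/\partial x_j : j=1,\dots,d\}$ is finite-dimensional with these as a basis, any $z \in Z$ with $(z,\partial\varphi/\partial x_j)_2 = 0$ for all $j$ must vanish (using $L^2$-orthogonality of $E_-$, $Z$, $E_+$ to discard the contributions of $a e_1$ and $w$ to \eqref{uorth2}). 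This reduces the task to showing $\langle \tilde L_1 u, u\rangle = -a^2 \lambda_1 + \langle \tilde L_1 w, w\rangle > 0$.

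The key identity comes from differentiating the bound-state equation $-\Delta\varphi + \omega P_\beta \varphi = \varphi^{p+1}$ with respect to $\omega$ (the smooth dependence of $\varphi_\omega$ on $\omega$ is clear from \eqref{defphi}), which gives
\begin{equation*}
\tilde L_1 (\partial_\omega \varphi) = -P_\beta \varphi.
\end{equation*}
Since $\varphi$ is real, integration by parts yields $m(\omega) = \tfrac12(\varphi, P_\beta\varphi)_2$, and self-adjointness of $P_\beta$ then gives $m'(\omega) = (P_\beta\varphi, \partial_\omega\varphi)_2 = -\langle \tilde L_1 \partial_\omega\varphi, \partial_\omega\varphi\rangle$. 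Decompose analogously $\partial_\omega \varphi = b e_1 + z_0 + w_0$ with $w_0 \in E_+$; invoking $\tilde L_1 e_1 = -\lambda_1 e_1$ and $\tilde L_1 z_0 = 0$ together with the $L^2$-orthogonality of the summands, we obtain
\begin{equation*}
\langle \tilde L_1 w_0, w_0\rangle = b^2 \lambda_1 - m'(\omega).
\end{equation*}
In particular $b \ne 0$, since $b=0$ would force $\langle \tilde L_1 w_0, w_0\rangle = -m'(\omega) < 0$, contradicting the positivity of the quadratic form on $E_+$.

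Next express the constraint \eqref{uorth1} through the identity $P_\beta\varphi = b\lambda_1 e_1 - \tilde L_1 w_0$: expanding $(u, b\lambda_1 e_1 - \tilde L_1 w_0)_2$, using $L^2$-orthogonality of $e_1$ with $w$ and $w_0$, and passing $\tilde L_1$ across via self-adjointness, one finds $(u,P_\beta\varphi)_2 = ab\lambda_1 - \langle \tilde L_1 w, w_0\rangle$, so \eqref{uorth1} becomes $\langle \tilde L_1 w, w_0\rangle = ab\lambda_1$. Applying the Cauchy--Schwarz inequality for the positive form on $E_+$, $\langle \tilde L_1 w, w_0\rangle^2 \le \langle \tilde L_1 w, w\rangle \langle \tilde L_1 w_0, w_0\rangle$, and substituting both of the above identities yields
\begin{equation*}
a^2 b^2 \lambda_1^2 \le \langle \tilde L_1 w, w\rangle \bigl(b^2 \lambda_1 - m'(\omega)\bigr),
\end{equation*}
so that $a^2 \lambda_1 \le \langle \tilde L_1 w, w\rangle \bigl(1 - m'(\omega)/(b^2\lambda_1)\bigr)$ after dividing by $b^2\lambda_1 > 0$. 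Substituting into $\langle \tilde L_1 u, u\rangle = -a^2\lambda_1 + \langle \tilde L_1 w, w\rangle$ produces
\begin{equation*}
\langle \tilde L_1 u, u\rangle \ge \langle \tilde L_1 w, w\rangle \cdot \frac{m'(\omega)}{b^2 \lambda_1} \ge 0.
\end{equation*}
For strictness, if $\langle \tilde L_1 w, w\rangle = 0$ then $w = 0$ by positive-definiteness of the form on $E_+$, whence $ab\lambda_1 = \langle \tilde L_1 w, w_0\rangle = 0$ forces $a = 0$, so $u \equiv 0$, contradicting the hypothesis. The main obstacle is bookkeeping: one must carefully justify that the cross terms in the computations above, which mix the $L^2$-inner product with the $H^1/H^{-1}$-duality, vanish because $E_-$, $Z$, $E_+$ are $L^2$-orthogonal and $\tilde L_1$ maps each spectral subspace into itself in the duality sense.
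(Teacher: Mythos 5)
Your proposal is correct and follows essentially the same route as the paper's proof: differentiating the profile equation to get $\tilde L_1(\partial_\omega\varphi) = -P_\beta\varphi$, converting $m'(\omega)>0$ into negativity of $\langle \tilde L_1 \partial_\omega\varphi,\partial_\omega\varphi\rangle$, decomposing both $u$ and $\partial_\omega\varphi$ along $E_-\oplus Z\oplus E_+$, and closing with the Cauchy--Schwarz inequality for the positive form on $E_+$. The only differences are cosmetic (you justify differentiability in $\omega$ from the explicit formula \eqref{defphi} rather than the implicit function theorem, and you handle the degenerate cases via $b\neq 0$ and $w=0$ at the end rather than splitting on $\mu=0$ up front), and your sign bookkeeping is sound.
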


\begin{proof} We follow the lines of the proof of Lemma 4.19 of \cite{LC}.
Let $H^1_r$ be the space of all real-valued radial functions in $H^1(\mathbb R^d)$, and define
$F: \mathbb R^+ \times H^1_r \to H^{-1}$ by 
$$F(\omega, \varphi) = -\Delta \varphi + \omega P_\beta \varphi- \varphi^{p+1}.$$
From equation \eqref{phieqn}, we have that $F(\omega,\varphi)=0$.  We claim that $\varphi=\varphi_\omega$ depends differentiably on $\omega$.  To see this, we apply the Implicit Function Theorem to \eqref{phieqn}.   Let $D:H^1_r \to H^{-1}$ be the derivative of $F$ with respect to $\varphi$, evaluated at $(\omega,\varphi_\omega)$.  One finds that $D$ is the restriction of $\tilde L^1$ to $H^1_r$.  As noted above, the kernel of $\tilde L^1$ in $H^1$ consists of linear combinations of the functions $\frac{\partial \varphi}{\partial x_j}$, $j \in \{1,\dots,d\}$.  For each $j \in \{1,\dots,d\}$, since $\frac{\partial \varphi}{\partial x_j}$ is odd in $x_j$, it is orthogonal to $H^1_r$.  Therefore the kernel of $D$ is trivial, and so it follows from the implicit function theorem that the map $\omega \mapsto \varphi_\omega$ is a differentiable map from $\mathbb R^+$ to $H^1_r$.

We are therefore justified in taking the derivative of \eqref{phieqn} with respect to $\omega$.  This yields the equation 
\begin{equation*}
\tilde L_1 \psi = -P_\beta \psi,
\end{equation*}
where $\psi = \frac{\partial \varphi_\omega}{\partial \omega}$.   Note that we have also shown that
\begin{equation}
\left( \psi, \frac{\partial \varphi}{\partial x_j}\right)_2 = 0 \quad (j = 1,\dots,d). 
\label{psiorthker}
\end{equation}

Since
$$
\frac{d}{d\omega}\left(\varphi,P_\beta \varphi   \right)_2 = -2\langle \tilde L_1 \psi, \psi \rangle,
$$
it follows from our assumption $m'(\omega) >0$ that
\begin{equation}
\langle \tilde L_1 \psi, \psi \rangle < 0.
\label{Lpsineg}
\end{equation}
Taking the decomposition \eqref{decomp} into account, and using  \eqref{uorth2} and \eqref{psiorthker}, we can write
\begin{equation*}
\begin{aligned}
u &= \mu e_1 + \xi\\
\psi & = \nu e_1 + \eta,
\end{aligned}
\end{equation*}
where $\mu, \nu \in \mathbb R$ and $\xi, \eta \in E_+$.

If $\mu=0$, we are done, because we then have $\langle \tilde L_1 u, u\rangle = \langle \tilde L_1 \xi, \xi \rangle > 0$.  Therefore we may assume that $\mu \ne 0$.   Observe that \eqref{uorth1} implies
$$
0 = \left( u, P_\beta \varphi\right)_2 =- \langle \tilde L_1 \psi, u \rangle = \mu \nu \lambda_1 - \langle \tilde L_1 \xi, \eta \rangle,
$$
and therefore
\begin{equation}
\langle \tilde L_1 \xi, \eta \rangle = \mu \nu \lambda_1.
\label{Lxieta}
\end{equation}

We claim that $\eta \ne 0$, because the assumption $\eta = 0$ leads to a contradiction:  if $\eta = 0$, then necessarily $\nu \ne 0$, so \eqref{Lxieta} implies $\langle \tilde L_1 \xi, \eta \rangle \ne 0$, contradicting $\eta = 0$.  Hence $\langle \tilde L_1 \eta, \eta \rangle > 0$, and we can use the Schwarz inequality for the form $\langle \tilde L_1 \cdot,\cdot\rangle$ on $E_+$ to write
\begin{equation*}
\begin{aligned}
\langle \tilde L_1 u, u \rangle &= -\mu^2 \lambda_1 + \langle \tilde L_1 \xi, \xi \rangle\\
                                & \ge -\mu^2 \lambda_1 + \frac{\langle \tilde L_1 \xi, \eta \rangle^2}{\langle L_1 \eta, \eta \rangle}.
\end{aligned}
\end{equation*}
From \eqref{Lxieta} it then follows that
$$
\begin{aligned}
\langle \tilde L_1 u, u \rangle & \ge - \mu^2 \lambda_1 + \frac{\mu^2 \nu^2 \lambda_1^2}{\langle \tilde L_1 \eta, \eta \rangle}\\
&= \frac{-\mu^2 \lambda_1 \left( \nu^2 \lambda_1 + \langle \tilde L_1 \psi,\psi \rangle  \right) + \mu^2 \nu^2 \lambda_1^2}{\langle \tilde L_1 \eta, \eta \rangle}= \frac{-\mu^2 \lambda_1 \langle \tilde L_1 \psi,\psi \rangle}{\langle \tilde L_1 \eta, \eta\rangle }.
\end{aligned}
$$
When combined with \eqref{Lpsineg}, this proves \eqref{Lupos}.
\end{proof}

From Lemma \ref{lemLupos}, via a standard argument (see, for example, the proof of Lemma 4.13 of \cite{LC}), one obtains the following corollary.

\begin{cor}
Under the same assumptions as in Lemma \ref{lemLupos}, there exists $\delta_1 > 0$ such that for all $u \in H^1_{\mathbb R}$ satisfying \eqref{uorth1} and \eqref{uorth2}, we have
$$
\langle \tilde L_1 u, u \rangle \ge \delta_1 \|u\|_1^2.
$$
\label{corL1pos}
\end{cor}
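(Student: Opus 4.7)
The plan is to argue by contradiction, promoting the pointwise positivity of $\langle \tilde L_1 u, u\rangle$ on the admissible class (Lemma \ref{lemLupos}) to uniform coercivity via a weak compactness argument. Suppose no such $\delta_1$ exists. Then there is a sequence $\{u_n\} \subset H^1_{\mathbb R}$ with $\|u_n\|_1 = 1$ for every $n$, satisfying the two orthogonality conditions \eqref{uorth1} and \eqref{uorth2}, such that $\langle \tilde L_1 u_n, u_n \rangle \to 0$ as $n \to \infty$.

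First I would record that
$$\langle \tilde L_1 u, u\rangle = \int \bigl(|\nabla u|^2 + \omega |u|^2 + \omega\beta|\nabla_y u|^2\bigr)\, d\mathbf x - (p+1)\int \varphi^p u^2\, d\mathbf x,$$
so the quadratic form $Q(u) = \int (|\nabla u|^2 + \omega|u|^2 + \omega\beta|\nabla_y u|^2)\, d\mathbf x$ satisfies $Q(u) \ge \min(1,\omega)\|u\|_1^2$. Since $\|u_n\|_1 = 1$ and $\langle \tilde L_1 u_n, u_n\rangle \to 0$, the sequence $\int \varphi^p u_n^2\, d\mathbf x$ must be bounded below by a positive constant for large $n$. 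Using weak compactness in $H^1$, I would extract a subsequence (still denoted $\{u_n\}$) with $u_n \rightharpoonup u_\infty$ weakly in $H^1$.

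The crux is a compactness claim: $\int \varphi^p u_n^2\, d\mathbf x \to \int \varphi^p u_\infty^2\, d\mathbf x$. This relies on the fact that $\varphi = \varphi_\omega$, given by the explicit formula \eqref{defphi}, is continuous on $\mathbb R^d$ and vanishes at infinity (by the regularity and decay properties of $Q_{d,p}$ cited from Theorem 8.1.1 of \cite{C}). I would split $\mathbb R^d = B_R(\mathbf 0) \cup B_R(\mathbf 0)^c$: on the bounded region, Rellich--Kondrachov gives $u_n \to u_\infty$ strongly in $L^2(B_R)$, while on the complement, $\sup_{|\mathbf x| \ge R} \varphi^p(\mathbf x)$ can be made arbitrarily small, and $|u_n|_2^2 + |u_\infty|_2^2$ is uniformly bounded. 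Passing $n \to \infty$ then $R \to \infty$ gives the claim. In particular $\int \varphi^p u_\infty^2 > 0$, so $u_\infty \not\equiv 0$.

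To close the argument, I would note that the orthogonality conditions \eqref{uorth1} and \eqref{uorth2} are bounded linear functionals on $H^1$ (since $P_\beta \varphi$ and $\partial \varphi/\partial x_j$ lie in $L^2$ or $H^{-1}$), so they pass to the weak limit and are satisfied by $u_\infty$. Weak lower semicontinuity of $Q$ together with the compactness of the nonlinear term yields $\langle \tilde L_1 u_\infty, u_\infty\rangle \le \liminf_{n \to \infty} \langle \tilde L_1 u_n, u_n\rangle = 0$. But Lemma \ref{lemLupos} applied to the nonzero $u_\infty$ forces $\langle \tilde L_1 u_\infty, u_\infty\rangle > 0$, a contradiction. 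The only delicate step is the compactness of the weighted integral under weak $H^1$ convergence; in the partially regularized setting $1 \le k \le d-1$ this must be verified directly from the decay of $\varphi_\omega$ in all coordinate directions provided by \eqref{defphi}, since there is no a priori compact embedding from $X_{k,1}$ into $L^2$.
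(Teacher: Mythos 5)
Your argument is correct and is precisely the ``standard argument'' the paper invokes by reference to the proof of Lemma 4.13 of \cite{LC}: a contradiction via a normalized sequence, weak $H^1$ compactness, strong convergence of the potential term $\int \varphi^p u_n^2\,d\mathbf x$ coming from the decay of $\varphi_\omega$, and an appeal to the strict positivity in Lemma \ref{lemLupos} applied to the nonzero weak limit. All the delicate points (nontriviality of $u_\infty$, passage of the orthogonality constraints to the weak limit, and the local Rellich compactness being unaffected by the partial regularization) are handled correctly.
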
 

\begin{lem} 
Under the same assumptions as in Lemma \ref{lemLupos},  there exists $\delta > 0$ such that for all $w \in H^1$ satisfying 
\begin{equation}
(w, P_\beta \varphi)_2 =  0,
\label{uorthPphi}
\end{equation}
\begin{equation}
(w, i\varphi)_2 = 0,
\label{vorthphi}
\end{equation}
and
\begin{equation}
\left( w, \frac{\partial \varphi}{\partial x_j}\right)_2= 0 \quad \text{for all $j \in \{1,\dots, d\}$ }, 
\label{uorthDphi} 
\end{equation}
we have
\begin{equation}
\langle S''(\varphi) w, w \rangle \ge \delta \|w\|_1^2.
\label{spplowerbound}
\end{equation}
\label{411}
\end{lem}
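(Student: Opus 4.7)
The plan is to decompose $w = u + iv$ with $u,v \in H^1_{\mathbb R}$, invoke the formula \eqref{SL1L2} to split $\langle S''(\varphi)w,w\rangle = \langle \tilde L_1 u,u\rangle + \langle \tilde L_2 v,v\rangle$, then translate the three orthogonality conditions \eqref{uorthPphi}--\eqref{uorthDphi} into constraints on $u$ and $v$ separately, and finally bound each quadratic form from below by $\|u\|_1^2$, respectively $\|v\|_1^2$.

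First, since $\varphi$, $P_\beta\varphi$, and $\partial\varphi/\partial x_j$ are real-valued, a direct computation from \eqref{definner} yields $(w,P_\beta\varphi)_2 = (u,P_\beta\varphi)_2$, $(w,\partial\varphi/\partial x_j)_2 = (u,\partial\varphi/\partial x_j)_2$, and $(w,i\varphi)_2 = (v,\varphi)_2$. Thus $u$ satisfies the hypotheses \eqref{uorth1}--\eqref{uorth2} of Lemma \ref{lemLupos}, so Corollary \ref{corL1pos} supplies $\langle \tilde L_1 u,u\rangle \ge \delta_1 \|u\|_1^2$; meanwhile $v$ is merely required to be $L^2$-orthogonal to $\varphi$.

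Next I need the analogous coercivity for $\tilde L_2$ on $\{v \in H^1_{\mathbb R}: (v,\varphi)_2 = 0\}$. The defining equation \eqref{phieqn} immediately gives $\tilde L_2 \varphi = 0$. Transforming by $\Lambda$, the operator $\tilde L_2^0 = -\Delta + \omega - (\varphi^0)^p$ is precisely the operator studied in Lemmas 4.12--4.18 of \cite{LC}, with $\varphi^0 > 0$ as its ground-state eigenfunction at eigenvalue $0$, simple and isolated, and with essential spectrum $[\omega,\infty)$. Conjugating back through \eqref{defzeros}, and since $\Lambda$ is a topological isomorphism of $L^2_{\mathbb R}$ and of $H^2_{\mathbb R}$, one deduces that $0$ is a simple isolated eigenvalue of $\tilde L_2$ with eigenspace $\mathrm{span}(\varphi)$ and the rest of $\sigma(\tilde L_2)$ lies in $[c,\infty)$ for some $c>0$. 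The spectral theorem then yields
\begin{equation*}
\langle \tilde L_2 v, v \rangle \ge c\, |v|_2^2 \qquad \text{whenever $(v,\varphi)_2 = 0$.}
\end{equation*}

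To upgrade this to an $H^1$-bound, I will use a G{\aa}rding-type estimate: since $\varphi \in L^\infty$ (by Theorem 8.1.1 of \cite{C}),
\begin{equation*}
\langle \tilde L_2 v,v\rangle + |\varphi|_\infty^p\, |v|_2^2 \ge \int\!|\nabla v|^2\, d\mathbf x + \omega\int\!|v|^2\, d\mathbf x \ge \min(1,\omega)\, \|v\|_1^2
\end{equation*}
for every $v \in H^1_{\mathbb R}$. Combining the two displayed inequalities gives $\langle \tilde L_2 v,v\rangle \ge \delta_2 \|v\|_1^2$ on the constrained subspace, and then
\begin{equation*}
\langle S''(\varphi)w,w\rangle \ge \delta_1 \|u\|_1^2 + \delta_2\|v\|_1^2 \ge \delta\, \|w\|_1^2
\end{equation*}
with $\delta = \min(\delta_1,\delta_2)$, proving \eqref{spplowerbound}. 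The main obstacle is the spectral analysis of $\tilde L_2$: while the conclusions are standard, one must verify that the rescaling $\Lambda$ faithfully transports the spectral structure of $\tilde L_2^0$ (explicitly treated in \cite{LC}) to $\tilde L_2$, and in particular that the gap above $0$ survives the partial anisotropy introduced by $P_\beta$.
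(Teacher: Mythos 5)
Your proof is correct and follows essentially the same route as the paper: decompose $w=u+iv$, use \eqref{SL1L2}, apply Corollary \ref{corL1pos} to $u$ (after noting that \eqref{uorthPphi} and \eqref{uorthDphi} reduce to \eqref{uorth1}--\eqref{uorth2} for the real part), and obtain coercivity of $\tilde L_2$ on $\{v:(v,\varphi)_2=0\}$ by conjugating with $\Lambda$ back to $\tilde L_2^0$. The only difference is that the paper simply cites Lemma 4.13 of \cite{LC} for the $H^1$-coercivity of $\tilde L_2^0$ on $\{\varphi^0\}^\perp$, whereas you rederive it from the ground-state/spectral-gap structure plus a G{\aa}rding estimate; your worry about $\Lambda$ is unfounded since $\Lambda$ is a constant multiple of a unitary on $L^2_{\mathbb R}$, so it transports the spectral data of $\tilde L_2^0$ to $\tilde L_2$ exactly.
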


\begin{proof}  Suppose $w =u+iv\in H^1$ satisfies \eqref{uorthPphi}, \eqref{vorthphi}, and \eqref{uorthDphi}.  From \eqref{vorthphi} we have
 that $$
 0 = (v,\varphi)_2 = (\Lambda v, \varphi^0)_2,
 $$
 and hence, by Lemma 4.13 of \cite{LC}, there exists a number $\delta_2 >0$, which is independent of $w$, such that
 $$
 \langle \tilde L_2^0(\Lambda v), \Lambda v \rangle \ge \delta_2 \|\Lambda v\|_1^2.
 $$
 From \eqref{defzeros} it then follows that
 \begin{equation}
  \langle \tilde L_2 v,  v \rangle \ge \delta_2 \| v\|_1^2.
 \label{L2pos}
 \end{equation}
 
 The estimate \eqref{spplowerbound} now follows immediately from \eqref{SL1L2},  Corollary \ref{corL1pos}, and \eqref{L2pos}.
\end{proof}

\begin{lem} 
Under the same assumptions as in Lemma \ref{lemLupos}, there exist $\epsilon > 0$ and $C>0$ such that for all $v \in U_\epsilon$ satisfying $M(v)=M(\varphi)$, we have
$$
E(v)-E(\varphi) \ge C \inf_{\theta \in \mathbf R, \mathbf{y} \in \mathbf R^d} \|e^{i\theta}v(\cdot - \mathbf{y}) - \varphi \|_1^2.
$$
\label{Ecoercive}
\end{lem}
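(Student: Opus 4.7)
My plan is to reduce the problem, via Lemma \ref{lemthetavmin}, to controlling $E(w) - E(\varphi)$ where $w = e^{i\sigma(v)} v(\cdot - \mathbf{Y}(v))$, and then apply the coercivity estimate of Lemma \ref{411} to the increment $h := w - \varphi$ after a small correction along $\varphi$. Since $E$ and $M$ are invariant under phase shifts and translations, we have $E(v) = E(w)$ and $M(w) = M(\varphi)$; and by Lemma \ref{lemthetavmin} the function $w$ automatically satisfies the orthogonality conditions \eqref{orth1} and \eqref{orth2}, which translate into $(h, i\varphi)_2 = 0$ and $(h, \partial\varphi/\partial x_j)_2 = 0$ (using that $\varphi$ is real, so $(\varphi, i\varphi)_2 = 0$ and $(\varphi, \partial_{x_j}\varphi)_2 = \tfrac12\int \partial_{x_j}(\varphi^2)\, d\mathbf{x} = 0$).

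The third orthogonality condition \eqref{uorthPphi} needed by Lemma \ref{411} is not immediate, but is nearly satisfied. Since $M$ is a quadratic form, the identity $M(w) = M(\varphi)$ expands as
\[
0 = M(w) - M(\varphi) = (P_\beta \varphi, h)_2 + M(h),
\]
so $(h, P_\beta \varphi)_2 = -M(h) = O(\|h\|_1^2)$. I will absorb this defect by writing $h = \tilde h + c \varphi$ with $c \in \mathbb R$ chosen so that $(\tilde h, P_\beta \varphi)_2 = 0$; explicitly
\[
c = \frac{(h, P_\beta\varphi)_2}{(\varphi, P_\beta \varphi)_2} = -\frac{M(h)}{2M(\varphi)},
\]
so $|c| \le C \|h\|_1^2$. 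The correction $c\varphi$ preserves the other two orthogonality conditions since $(\varphi, i\varphi)_2 = (\varphi, \partial_{x_j}\varphi)_2 = 0$, so $\tilde h$ satisfies \eqref{uorthPphi}, \eqref{vorthphi}, and \eqref{uorthDphi} simultaneously, and Lemma \ref{411} yields $\langle S''(\varphi)\tilde h, \tilde h\rangle \ge \delta \|\tilde h\|_1^2$.

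Since $M(w) = M(\varphi)$, the Taylor expansion \eqref{Ssecond} of $S = E + \omega M$ at its critical point $\varphi$ gives
\[
E(w) - E(\varphi) = S(w) - S(\varphi) = \tfrac12 \langle S''(\varphi) h, h\rangle + o(\|h\|_1^2).
\]
Writing $h = \tilde h + c\varphi$ and using the bilinearity of $\langle S''(\varphi)\cdot,\cdot\rangle$, together with $|c| \le C\|h\|_1^2$ and the continuity of $S''(\varphi):H^1 \to H^{-1}$, the cross and diagonal correction terms are $O(\|h\|_1^3)$, while $\|\tilde h\|_1 \ge \|h\|_1 - |c|\|\varphi\|_1 \ge \tfrac12 \|h\|_1$ once $\|h\|_1$ is small enough. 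Combining these,
\[
\langle S''(\varphi) h, h\rangle \ge \delta \|\tilde h\|_1^2 - C\|h\|_1^3 \ge \tfrac{\delta}{8}\|h\|_1^2
\]
provided $\epsilon$ is chosen small enough to make $\|h\|_1 = \|w - \varphi\|_1$ sufficiently small (continuity of the minimization in Lemma \ref{lemthetavmin} makes this possible by shrinking $\epsilon$). Hence $E(v) - E(\varphi) \ge \tfrac{\delta}{32}\|h\|_1^2$, and since $w$ is one particular choice of phase shift and translation,
\[
\|h\|_1^2 = \|w - \varphi\|_1^2 \ge \inf_{\theta \in \mathbb R,\ \mathbf{y} \in \mathbb R^d} \|e^{i\theta}v(\cdot - \mathbf{y}) - \varphi\|_1^2,
\]
which gives the required estimate with $C = \delta/32$.

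The main subtlety I anticipate is the handling of the defect in the third orthogonality condition: Lemma \ref{lemthetavmin} gives only two orthogonality conditions (those coming from the phase and translation symmetries), and the third—orthogonality to $P_\beta \varphi$—is supplied only to leading order by the constraint $M(v) = M(\varphi)$. The correction by $c\varphi$ must be kept consistent with the other two conditions, which fortunately works because $\varphi$ is real and radial. A second, more routine point is checking that the remainder estimates in the Taylor expansion of $S$ are indeed $o(\|h\|_1^2)$ and that the quadratic form $\langle S''(\varphi)\cdot,\cdot\rangle$ is continuous on $H^1$, both of which follow from Gagliardo-Nirenberg applied to the nonlinear term $|u|^{p+2}$ under the assumption $0 < p < p_c(d)$.
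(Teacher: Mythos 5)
Your proposal is correct and follows essentially the same argument as the paper: reduce to the modulated function $w$ via Lemma \ref{lemthetavmin}, restore the missing orthogonality to $P_\beta\varphi$ by a correction whose size is $O(\|w-\varphi\|_1^2)$ thanks to the constraint $M(v)=M(\varphi)$, apply Lemma \ref{411}, and absorb the cubic error terms in the Taylor expansion of $S$. The only (immaterial) difference is that you correct along $\varphi$ where the paper corrects along $P_\beta\varphi$; both directions are real and orthogonal to $i\varphi$ and to the $\partial\varphi/\partial x_j$, so either works.
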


\begin{proof} Choose $\epsilon > 0$ sufficiently small that Lemma \ref{thetavmin} applies, and suppose $v \in U_\epsilon$ satisfies $M(v)=M(\varphi)$. Define $w \in H^1$ by
$$
w = e^{i\sigma(v)}v(\cdot - \mathbf{Y}(v)),
$$
so that by Lemma \ref{thetavmin} we have that \eqref{orth1} and \eqref{orth2} hold.  

Define $z \in H^1$ by
\begin{equation}
z=(w-\varphi) - \lambda P_\beta \varphi,
\label{defz}
\end{equation}
where
$$
\lambda = \frac{\left(w-\varphi, P_\beta \varphi \right)_2}{|P_\beta \varphi |_2^2},
$$
so that
\begin{equation}
\left( z, P_\beta \varphi \right)_2 = 0.
\label{zcond1}
\end{equation}
Note that since $\varphi$ and $P_\beta \varphi$ are real-valued, we have that
$$
\begin{aligned}
(\varphi, i \varphi )_2 &= 0\\
\left( P_\beta\varphi, i \varphi \right)_2 &= 0.
\end{aligned}
$$
Also, since $P_\beta$ is self-adjoint on $L^2$ and $\frac{\partial}{\partial x_j}$ is skew-adjoint on $L^2$ for each $j \in \{1,\dots, d\}$, we have that
$$
\begin{aligned}
\left(\varphi, \frac{\partial\varphi}{\partial x_j}\right)_2 &= 0,\\
\left( P_\beta \varphi, \frac{\partial\varphi}{\partial x_j}\right)_2 & = 0.
\end{aligned}
$$
Therefore
\begin{equation}
(z,i \varphi)_2 = 0
\label{zcond2}
\end{equation}
and, for each $j \in \{1,\dots,d\}$,
\begin{equation}
\left(z, \frac{\partial\varphi}{\partial x_j}\right)_2 = 0.
\label{zcond3}
\end{equation}
From \eqref{zcond1}, \eqref{zcond2}, \eqref{zcond3}, and Lemma \ref{411}, we get that
\begin{equation}
\langle S''(\varphi)z, z\rangle \ge \delta \|z\|_1^2.
\label{Szbelow}
\end{equation}

Note now that for all $v_1, v_2 \in H^1$ we have
$$
M(v_1 + v_2) = M(v_1)+ M(v_2) + \left(P_\beta v_1,v_2\right)_2.
$$
Therefore, from the assumption $M(v)=M(\varphi)$ we deduce that 
$$
 0 =M(v)-M(\varphi)=M(w)-M(\varphi) = M(w-\varphi) +\left(P_\beta \varphi, w - \varphi \right)_2,
$$
and hence that
\begin{equation}
|\lambda| =\frac{|M(w-\varphi)|}{|P_\beta \varphi |_2^2} \le C \|w - \varphi\|_1^2
\label{lambdaest}
\end{equation}
for some constant $C$ which is independent of our choice of $v \in U_\epsilon$.
From \eqref{defz} and \eqref{lambdaest} it follows that there exist constants $C_1 > 0$ and $C_2 > 0$,
independent of $v \in U_\epsilon$, such that 
\begin{equation} 
C_1 \|w-\varphi\|_1 \le \|z\|_1 \le C_2 \|w - \varphi\|_1.
 \label{zequiv}
\end{equation}

Starting from \eqref{Ssecond}, recalling that $S'(\varphi)=0$ by Lemma \ref{critpoint}, and using \eqref{defz}, we can write
\begin{equation}
\begin{aligned}
S(v)-S(\varphi)&=S(w)-S(\varphi)\\
&= \frac12\langle S''(\varphi)(w-\varphi),w-\varphi \rangle + o\left(\| w-\varphi \|_1^2\right)\\
&=\frac12\left\{\langle S''(z),z \rangle + \lambda^2 \langle S''(\varphi) P_\beta \varphi, P_\beta \varphi \rangle + 2 \lambda  \langle S''(\varphi) P_\beta \varphi, z \rangle \right\}+ o\left(\| w-\varphi \|_1^2\right)
\end{aligned}
\label{Sdiff}
\end{equation}
as $\| w - \varphi \|_1 \to 0$.  Since  
$$
  \left| \lambda^2 \langle S''(\varphi) P_\beta \varphi, P_\beta \varphi \rangle\right|  \le C|\lambda|^2 \le C\|w-\varphi\|_1^4
$$
and
$$
\left|\lambda \langle S''(\varphi) P_\beta \varphi, z \rangle\right| \le C|\lambda| \|z\|_1 \le C |\lambda| \|w-\varphi\|_1 \le C \|w - \varphi\|_1^3,
$$ 
 it follows from \eqref{Szbelow}, \eqref{zequiv}, and \eqref{Sdiff} that
$$
\begin{aligned}
S(v)-S(\varphi) &\ge \frac12 \langle S''(\varphi) z, z \rangle + o(\|w-\varphi\|_1^2) \\
& \ge \frac{\delta}{2} \|z\|_1^2 + o\left(\|w - \varphi\|_1^2\right)\\
& \ge \frac{\delta}{4} \|w-\varphi\|_1^2 
\end{aligned}
$$
for $\|w - \varphi\|_1$ sufficiently small. 
 
 Hence, if $M(v)=M(\varphi)$, $v \in U_\epsilon$, and $\epsilon$ is sufficiently small, we can conclude that
\begin{equation*}
E(v)-E(\varphi) = S(v)-S(\varphi) \ge  \frac{\delta}{4}\|w-\varphi\|_1^2,
\end{equation*}
from which the statement of the Lemma follows.
\end{proof}

\bigskip

{\it Proof of Theorem \ref{GSSstabilitythm}.}  Theorem \ref{GSSstabilitythm} follows easily from Lemma \ref{Ecoercive}, by the following (standard) argument, which we paraphrase from \cite{L}.  Suppose the assertion of the theorem is false; then there exist a number $\epsilon > 0$ and sequences $\left\{u_{0,n}\right\}$ in $X_{k,1}$, $\left\{\theta_n\right\}$ in $\mathbb R$, $\left\{\mathbf y_n\right\}$ in $\mathbb R^d$, and $\left\{t_n\right\}$ in $\mathbb R$ such that
\begin{equation}
\lim_{n \to \infty} \|e^{i\theta_n}u_{0,n}(\cdot - \mathbf y_n)-\varphi \|_{H^1} = 0,
\label{initclose}
\end{equation} 
and for all $n \ge 1$, $\theta \in \mathbb R$, and $\mathbf y \in \mathbb R^d$,
\begin{equation}
\|e^{i\theta}u_n(\cdot - \mathbf y,t_n)-\varphi\|_{H^1} \ge \epsilon,
\label{laterfar}
\end{equation}
where $u_n(\mathbf x,t)$ is the solution of \eqref{rNLS} with initial data $u_n(\mathbf x,0)=u_{0,n}(\mathbf x)$.  

 From \eqref{initclose} we deduce that $\displaystyle \lim_{n \to \infty} E(u_{0,n})=E(\varphi)$ and $\displaystyle \lim_{n \to \infty} M(u_{0,n})=M(\varphi)$, and since $E$ and $M$ are conserved functionals for the flow of \eqref{rNLS}, it follows that
 $$
 \lim_{n \to \infty} E(u_n(\cdot,t_n))=E(\varphi)\quad \text{and $\lim_{n \to \infty}M(u_n(\cdot,t_n))=M(\varphi)$}.
 $$ 
  For $n \ge 1$, define
$$
v_n(\mathbf x) = \left(\frac{M(\varphi)}{M(u_n(\cdot,t_n))}\right)^{1/2}u_n(\mathbf x,t_n);
$$
then we have that $M(v_n)=M(u_n(\cdot,t_n))$ for all $n$, while $\displaystyle \lim_{n \to \infty} E(v_n)=E(\varphi)$ and $\displaystyle \lim_{n \to \infty} \|v_n - u_n(\cdot,t_n)\|_{H^1}=0$.   Choose $N_0$ such that $\|v_n-u_n(\cdot, t_n)\|_{H^1} \le \epsilon/2$ for all $n \ge N_0$.  For every $n \ge N_0$,  every  $\theta \in \mathbb R$ and every $\mathbf y \in \mathbb R^d$, we have
$$
\begin{aligned}
\left\|e^{i\theta}v_n(\cdot - \mathbf y)-\varphi\right\|_{H^1} &\ge \left\|e^{i\theta}u_n(\cdot - \mathbf y,t_n)-\varphi\right\|_{H^1}-\left\|e^{i\theta}\left[u_n(\cdot-\mathbf y, t_n)-v_n(\cdot - \mathbf y)      \right]\right\|_{H^1}\\
& \ge \epsilon - \|v_n-u_n(\cdot,t_n)\| \ge \frac{\epsilon}{2}
\end{aligned}
$$
by \eqref{laterfar}, which shows that
$$
\inf_{\theta \in \mathbb R, \mathbf y \in \mathbb R^d}\left\|e^{i\theta}v_n(\cdot - \mathbf y)-\varphi\right\|_{H^1} \ge \frac{\epsilon}{2}
$$
for $n \ge N_0$.  But on the other hand, from Lemma \ref{Ecoercive} we have that
$$
\lim_{n \to \infty} \inf_{\theta \in \mathbb R, \mathbf y \in \mathbb R^d}\left\|e^{i\theta}v_n(\cdot - \mathbf y)-\varphi\right\|_{H^1} = 0,
$$
a contradiction. \qed


\begin{thebibliography}{BBBSS}
 
 \bibitem{A} J. Albert, Concentration compactness and the stability of solitary-wave solutions to nonlocal equations, in {\it Applied analysis (Baton Rouge, LA, 1996)}, 1--29, Contemp.\ Math., {\bf 221}, Amer.\ Math.\ Soc., Providence, RI, 1999.  

\bibitem{AAS} P. Antonelli, J. Arbunich, and C. Sparber, Regularizing nonlinear Schr\"odinger equations through 
partial off-axis variations, {\it SIAM J. Math.\ Analysis} {\bf 51} (2019), 110--130.

\bibitem{AKS} J. Arbunich, C. Klein, and C. Sparber, On a class of derivative nonlinear Schrödinger-type equations in two
spatial dimensions, {\it ESAIM Math.\ Model.\ Numer.\ Anal.} {\bf 53} (2019), 1477--1505.

\bibitem{BBM}  T. Benjamin, J. Bona, J. Mahony, Model equations for long waves in nonlinear dispersive systems, 
{\it Philos.\ Trans.\ Roy.\ Soc.\ London Ser.\ A} {\bf 272} (1972), 47--78.

\bibitem{BL1} H. Berestycki and P. Lions, Nonlinear scalar field equations I, {\it Arch.\ Rat.\ Mech.\ Anal.} {\bf 82} (1983), 313--346.

\bibitem{BL2} H. Berestycki and P. Lions, Nonlinear scalar field equations II, {\it Arch.\ Rat.\ Mech.\ Anal.} {\bf 82} (1983),
347--375. 

\bibitem{BIN} O. Besov, V. Il'in, and S. Nikol'skii, {\it Integral Representations of Functions and Imbedding Theorems, Vol. 1}, V. H. Winston and Sons, Washington, DC, 1978.

\bibitem{C} T. Cazenave, {\it Semilinear Schr\"odinger Equations}, Amer.\ Math.\ Soc., Providence, RI, 2003.

\bibitem{CL} T. Cazenave and P. Lions,  Orbital stability of standing waves for some
nonlinear Schr\"odinger equations, {\it Comm.\ Math.\ Phys.} {\bf 85} (1982), 549--561.

\bibitem{dB} A. de Bouard, \'Equations dispersives non lin\'eaires. Cours \` a l'\' Ecole d’\'Et\'e de Math\'ematiques Institut Fourier, Grenoble, 2005, available at: \url{https://www-fourier.univ-grenoble-alpes.fr/content/ecole-d-ete-2005}.

\bibitem{DLS} \'E. Dumas, D. Lannes, and J. Szeftel,  
Variants of the focusing NLS equation: derivation, justification, and open problems related to filamentation, {\it Laser filamentation}, 19--75, CRM Ser. Math. Phys., Springer, Cham, 2016.

\bibitem{E} A. Esfahani, Anisotropic Gagliardo-Nirenberg inequality with fractional derivatives. {\it Z. Angew.\ Math.\ Phys.} {\bf 66} (2015), 3345--3356.

\bibitem{F} A. Friedman, {\it Partial Differential Equations}, Robert E. Krieger, Huntington, NY, 1976.

\bibitem{GNN} B. Gidas, W.-M. Ni, L. Nirenberg, Symmetry and related properties via the maximum principle, 
{\it Comm.\ Math.\ Phys.} {\bf 68} (1979) 209--243.

\bibitem{GSS1} M. Grillakis, J. Shatah and W. Strauss, Stability theory of solitary waves in the presence of
symmetry I, {\it J. Funct.\ Anal.} {\bf 74} (1987), 160--197.

\bibitem{GSS2} M. Grillakis, J. Shatah and W. Strauss, Stability theory of solitary waves in the presence of symmetry II, {\it J. Funct.\ Anal.} {\bf 94} (1990), 308--348.

\bibitem{HK} T. Hmidi and S. Keraani, Blowup theory for the critical nonlinear Schr\"odinger equations revisited, {\it Int.\ Math.\ Res.\ Not.} {\bf 46} (2004), 2815--2828.      


\bibitem{KV} R. Killip and M. Vi\c san, Nonlinear Schr\"odinger equations at critical regularity, in {\it Evolution equations}, 325–437, Clay Math. Proc., 17, Amer. Math. Soc., Providence, RI, 2013. 

\bibitem{K} A. A. Kolokolov, Stability of the dominant mode of the nonlinear wave equation in a cubic medium, {\it J. Appl. Mech. Tech. Phys.} {\bf 14} (1973), 426--428.

\bibitem{LC} S. Le Coz, Standing waves in nonlinear Schr\"odinger equations, in {\it Analytical and numerical aspects of partial differential equations}, de Gruyter, Berlin, 2009, 151--192. 

\bibitem{LL} E. Lieb and M. Loss, {\it Analysis (2nd edition)}, Amer.\ Math. Soc., Providence, RI, 2001.

\bibitem{L} P. Lions, The concentration compactness principle in the calculus of variations, The locally compact
case, part 1, Ann. Inst. H. Poincar\' e {\bf 1} (1984), 109–145.

\bibitem{PP} D. Pelinovsky and M. Plum, Dynamics of the black soliton in a regularized nonlinear Schrodinger equation,  
\url{arXiv:2304.04823 [math.AP]}

\bibitem{SS} P. Souganidis and W. Strauss, Instability of a class of dispersive solitary waves, {\it Proc.\ Roy.\ Soc.\
Edinburgh A} {\bf 114} (1990), 195--212.

\bibitem{VK} N. G. Vakhitov and A. A. Kolokolov, Stationary solutions of the wave equation in the medium with nonlinearity saturation, {\it Radiophys.\ Quantum Electron.} {\bf 16} (1973), 783--789.

\bibitem{W1} M. Weinstein, Nonlinear Schr\"odinger equations and sharp interpolation estimates, {\it Comm.\ Math.\ Phys.} {\bf 87} (1983),  567--576.

\bibitem{W2} M. Weinstein,  Modulational stability of ground states of nonlinear Schr\"odinger equations, {\it SIAM J. Math.\ Anal.} {\bf 16} (1985), 472--491

\bibitem{W3} M. Weinstein,  Lyapunov stability of ground states of nonlinear dispersive
evolution equations, {\it Comm.\ Pure Appl.\ Math.} {\bf 39} (1986), 51--68.

\bibitem{Z} L. Zeng, Existence and stability of solitary-wave solutions of equations of Benjamin–Bona–Mahony type,
{\it J. Diff.\ Equations} {\bf 188} (2003), 1--32.


\end{thebibliography}
\end{document}